\numberwithin{equation}{section}
\theoremstyle{plain}
\newtheorem{theorem}{Theorem}[section]
\newtheorem{proposition}[theorem]{Proposition}
\newtheorem{lemma}[theorem]{Lemma}
\newtheorem{corollary}[theorem]{Corollary}
\theoremstyle{definition}
\newtheorem{definition}[theorem]{Definition}
\newcommand{\C}{\mathbb{C}}
\newcommand{\R}{\mathbb{R}}
\newcommand{\cK}{\mathcal{K}}
\newcommand{\cL}{\mathcal{L}}
\newcommand{\cO}{\mathcal{O}}
\newcommand{\cP}{\mathcal{P}}
\newcommand{\abs}[1]{\vert #1 \vert}
\newcommand{\abslr}[1]{\left\vert #1 \right\vert}
\newcommand{\norm}[1]{\Vert #1 \Vert}
\DeclareMathOperator*{\argmin}{argmin}
\DeclareMathOperator{\interior}{int}
\DeclareMathOperator{\exterior}{ext}
\newcommand{\coloneq}{\mathrel{\mathop:}=}
\newcommand{\eop}{}
\title{Properties and examples of Faber--Walsh polynomials}
\author{Olivier S\`{e}te\footnotemark[2] \and J\"{o}rg Liesen\footnotemark[2]}
\begin{document}
\maketitle

\renewcommand{\thefootnote}{\fnsymbol{footnote}}

\footnotetext[2]{Institute of Mathematics, Technische Universit\"{a}t 
Berlin, Stra{\ss}e des 17. Juni 136, 10623 Berlin, Germany.
\texttt{\{sete,liesen\}@math.tu-berlin.de}}

\renewcommand{\thefootnote}{\arabic{footnote}}

\begin{abstract}
The Faber--Walsh polynomials are a direct generalization of the (classical)
Faber polynomials from simply connected sets to sets with several simply 
connected components.
In this paper we derive new properties of the Faber--Walsh polynomials, where 
we focus on results of interest in numerical linear algebra, and on the 
relation between the Faber--Walsh polynomials and the classical Faber and 
Chebyshev polynomials.
Moreover, we present examples of Faber--Walsh polynomials for two real 
intervals as well as for some non-real sets consisting of several simply 
connected components.

%% end of abstract
 \end{abstract}

\textbf{Keywords}
Faber--Walsh polynomials, generalized Faber polynomials, multiply connected 
domains, lemniscatic domains, lemniscatic maps, conformal maps, asymptotic 
convergence factor

\textbf{Mathematics Subject Classification (2010)} 
30C10; 30E10; 30C20

\section{Introduction}

The (classical) Faber polynomials associated with a simply connected compact 
set $\Omega \subset \C$ have found numerous applications in numerical 
approximation~\cite{Ellacott1983,Ellacott1986}  and in particular in numerical 
linear
algebra~\cite{BeckermannReichel2009,Eiermann1989,HeuvelineSadkane1997,MoretNovati2001,MoretNovati2001a,StarkeVarga1993}.
The main idea behind their 
construction, originally due to Faber~\cite{Faber1903}, is to have a sequence 
of polynomials $F_0, F_1, F_2,\dots$, so that each analytic function on 
$\Omega$ can be expanded in a convergent series of the form $\sum_{j=0}^\infty 
\gamma_j F_j(z)$, where the polynomials depend only on the set $\Omega$.  The 
definition and practical computation of the Faber polynomials is based on the 
Riemann map from the exterior of $\Omega$ (in the extended complex plane) onto 
the exterior of the unit disk.  The Faber polynomials therefore exist for 
simply connected sets only.  For surveys of the theory of Faber polynomials we 
refer to~\cite{Curtiss1971,Suetin1998}.

In 1956, Walsh found a direct generalization of the Riemann mapping
theorem from simply connected domains to multiply connected domains, which now 
are mapped conformally onto a \emph{lemniscatic domain}~\cite{Walsh1956};
see Theorem~\ref{thm:conformal_map} below for a complete statement.  Further 
existence proofs of Walsh's \emph{lemniscatic map} were given by 
Grunsky~\cite{Grunsky1957,Grunsky1957a} 
and~\cite[Theorem~3.8.3]{Grunsky1978}, Jenkins~\cite{Jenkins1958} and
Landau~\cite{Landau1961}.
We recently studied lemniscatic maps in~\cite{SeteLiesen2015} and derived 
several explicit examples, which are among the first in the literature (see 
also the technical report~\cite{Sete2013}).

In a subsequent paper of 1958, Walsh used his lemniscatic map for
obtaining a generalization of the Faber polynomials from simply
connected sets to sets consisting of several simply connected 
components~\cite{Walsh1958}; see Theorem~\ref{thm:fw_poly} below for a complete 
statement.  While the literature on Faber polynomials is quite extensive, the
\emph{Faber--Walsh polynomials} have rarely been studied in the literature.
One notable exception is Suetin's book~\cite{Suetin1998}, which contains a 
proper subsection on the Faber--Walsh polynomials as well as a few
further references (see also the technical report~\cite{Sete2013}).

Clearly, the mathematical theory and practical applicability of the 
Faber--Walsh polynomials have not been fully explored yet.
Our goal in this paper is to contribute to a better understanding.
To this end we derive some new theoretical results on Faber--Walsh polynomials 
and give several analytic as well as numerically computed examples.  In our 
theoretical study we focus on results that are of interest in constructive 
approximation and numerical linear algebra applications, 
and on the relation between Faber--Walsh polynomials and the classical Faber as 
well as Chebyshev polynomials.  In our examples we consider sets consisting
of two real intervals, as well as non-real sets consisting of several 
components.  In particular, our numerical results demonstrate that the 
Faber--Walsh polynomials are computable for a wide range of sets
via a numerical conformal mapping technique for multiply connected domains 
introduced in~\cite{NLS2015_numconf}.

The paper is organized as follows.
In Section~\ref{sect:FW} we give a summary of Walsh's results, and recall the 
definition of Faber--Walsh polynomials.
We then derive general properties of Faber--Walsh polynomials in 
Section~\ref{sect:theory}.
In Section~\ref{sect:intervals} we consider Faber--Walsh polynomials for two 
real intervals and particularly study their relation to the classical Chebyshev 
polynomials.
In Section~\ref{sect:examples} we show numerical examples of Faber--Walsh 
polynomials for two different nonreal sets.

%% End of Introduction 
 
\section{The Faber--Walsh polynomials}
\label{sect:FW}

We first discuss Walsh's generalization of the Riemann mapping theorem.
For a given integer $N \geq 1$, let $a_1, \ldots, a_N \in \C$ be pairwise
distinct and let the positive real numbers $m_1, \ldots, m_N$ satisfy 
$\sum_{j=1}^N m_j = 1$.  Then for any $\mu > 0$ the set
\begin{equation}
\cL \coloneq \{ w \in \widehat{\C} : \abs{U(w)} > \mu \}, \quad \text{where}
\quad U(w) \coloneq \prod_{j=1}^N (w-a_j)^{m_j},
\label{eqn:lemniscate}
\end{equation}
is called a \emph{lemniscatic domain} in the extended complex plane 
$\widehat{\C} = \C \cup \{ \infty \}$. The following theorem of Walsh shows 
that lemniscatic domains are canonical domains for certain $N$-times connected 
domains (open and connected sets).

\begin{theorem}[{\cite[Theorems~3 and~4]{Walsh1956}}]
\label{thm:conformal_map}
Let $E \coloneq \cup_{j=1}^N E_j$, where $E_1, \ldots, E_N \subseteq \C$ are
mutually exterior simply connected compact sets (none a single point), and let 
$\cK \coloneq \widehat{\C} \backslash E$.  Then there exists a unique 
lemniscatic domain $\cL$ of the form~\eqref{eqn:lemniscate} with
$\mu>0$ equal to the logarithmic capacity of $E$, and a unique bijective 
conformal map
\begin{equation*}
\Phi : \cK \to \cL \quad \text{with} \quad \Phi(z) = z + \cO \left( \frac{1}{z} 
\right) \quad \text{for $z$ near infinity.}
\end{equation*}
We call the function $\Phi$ the \emph{lemniscatic map} of $\cK$ (or of $E$), 
and denote $\psi = \Phi^{-1}$.
\end{theorem}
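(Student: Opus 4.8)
The plan is to build $\Phi$ from the Green's function of $\cK$ with pole at infinity, to read off the lemniscatic data $(m_j,a_j,\mu)$ from its boundary behaviour, and to deduce uniqueness from the conformal invariance of Green's functions.

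\emph{Potential-theoretic set-up.} Since none of the $E_j$ is a single point, $E$ has positive logarithmic capacity and $\cK=\widehat{\C}\setminus E$ is regular for the Dirichlet problem, so the Green's function $g\coloneq g_\cK(\cdot,\infty)$ exists; normalise it by $g(z)=\log\abs{z}-\log\mu+o(1)$ as $z\to\infty$, which defines the number $\mu>0$, the logarithmic capacity of $E$. Let $\omega_j$ be the harmonic measure of $\partial E_j$ in $\cK$ and put $m_j\coloneq\omega_j(\infty)$; the strong maximum principle gives $m_j>0$, and $\sum_j\omega_j\equiv 1$ on $\cK$ gives $\sum_j m_j=1$. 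The flux identity for Green's functions says the conjugate period of $g$ around $\partial E_j$ equals $2\pi m_j$, so the multivalued analytic completion $G\coloneq g+i\widetilde g$ has $\re G=g$, is shifted by $2\pi i\,m_j$ along a positively oriented loop around $E_j$, has $\abs{e^{G}}=e^{g}$ single-valued, and satisfies $\mu e^{G(z)}/z\to 1$ as $z\to\infty$ for a suitable branch.

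\emph{Construction.} For every admissible lemniscatic domain $\cL$ as in~\eqref{eqn:lemniscate} built from these exponents $m_j$, the function $w\mapsto\sum_j m_j\log\abs{w-a_j}-\log\mu$ is exactly $g_\cL(\cdot,\infty)$ (it is harmonic on $\cL$, vanishes on $\partial\cL$, and behaves like $\log\abs{w}-\log\mu$ at infinity because $\sum_j m_j=1$; one also checks that $\{\abs{U}=\mu\}$ then consists of $N$ disjoint analytic Jordan curves, matching the connectivity of $\cK$). Hence the map we seek must satisfy $\log U(\Phi(z))=G(z)+\log\mu$ as multivalued functions on $\cK$, and the monodromy of the left side around a loop encircling $a_j$ is $2\pi i\,m_j$, matching that of $G$ around $\partial E_j$. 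The crux is to fix the centres $a_j$ so that this equation has a globally single-valued, injective and onto solution $\Phi$. I would handle this by a Koebe-type continuity argument: in the space of domains $\widehat{\C}\setminus E$ with $E$ a union of $N$ non-degenerate continua, the set of those admitting a normalised lemniscatic map is non-empty (it contains the lemniscatic domains themselves, with $\Phi=\id$), and one proves it is open and closed. An alternative is exhaustion from inside: approximate $\cK$ by finitely connected domains with analytic boundary for which an analogous normalised map $\Phi_n$ is available, pass to a locally uniform limit using a normal-families argument (with $\Phi_n(z)=z+\cO(1/z)$ controlling the limit), invoke Hurwitz's theorem for injectivity of $\Phi$, and use the argument principle for $z\mapsto\log U(\Phi(z))-G(z)$ together with boundary correspondence to see that $\Phi$ maps $\cK$ onto $\cL$. (A third, more computational route is induction on $N$, peeling off one component at a time.)

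\emph{Uniqueness.} If $\Phi_1\colon\cK\to\cL_1$ and $\Phi_2\colon\cK\to\cL_2$ both have the stated properties, then $F\coloneq\Phi_2\circ\Phi_1^{-1}\colon\cL_1\to\cL_2$ is a conformal bijection with $F(w)=w+\cO(1/w)$ near infinity, and conformal invariance of the Green's function gives $g_{\cL_1}(\cdot,\infty)=g_{\cL_2}(F(\cdot),\infty)$. Writing both sides through the explicit representation above and comparing the logarithmic poles inside the complementary components, the boundary values, and the expansion at infinity forces the number of components, the $m_j$, the $a_j$ and $\mu$ to agree, so $\cL_1=\cL_2$; then $F$ is a conformal automorphism of a single lemniscatic domain tangent to the identity at infinity, and a maximum-principle argument (or Schwarz reflection across the defining analytic curves) shows $F=\id$, whence $\Phi_1=\Phi_2$. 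The identification of $\mu$ with the logarithmic capacity of $E$ is part of the normalisation of $g$, and is also forced by conformal invariance of capacity.
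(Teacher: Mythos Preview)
The paper does not prove this theorem. Theorem~\ref{thm:conformal_map} is stated with an explicit citation to Walsh's original 1956 paper (Theorems~3 and~4 there), and the surrounding text in Section~\ref{sect:FW} merely summarises the result and points to further existence proofs by Grunsky, Jenkins, and Landau. It is used as a black box throughout; no argument for it is given anywhere in the paper. So there is no ``paper's own proof'' to compare your proposal against.

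As to your sketch itself: the potential-theoretic framing (Green's function, harmonic measures $m_j=\omega_j(\infty)$, periods of the harmonic conjugate) and the uniqueness argument via conformal invariance of Green's functions are sound and indeed close in spirit to some of the classical proofs. The existence part, however, is not a proof but a menu of three possible strategies (Koebe continuity, exhaustion plus normal families, induction on $N$), none of which you carry out. The genuinely hard step---choosing the centres $a_j$ so that the multivalued equation $\log U(\Phi(z))=G(z)+\log\mu$ admits a single-valued, univalent, surjective solution $\Phi$---is precisely where the work lies, and you have only named it, not done it. If you want a self-contained proof you would need to flesh out one of those routes in full; otherwise, citing Walsh (or Grunsky, Jenkins, Landau) as the paper does is the appropriate move.
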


For $N=1$ the set $E$ is simply connected and a lemniscatic domain is the
exterior of a disk.  Hence in this case Theorem~\ref{thm:conformal_map} 
is equivalent with the Riemann mapping theorem.
In~\cite{SeteLiesen2015} we studied the properties of lemniscatic maps and 
derived several analytic examples.  In the subsequent 
paper~\cite{NLS2015_numconf}, written jointly with Nasser, we presented a 
numerical method for computing lemniscatic maps.  Both the analytic results 
from~\cite{SeteLiesen2015} and the numerical method from~\cite{NLS2015_numconf} 
will be used in Sections~\ref{sect:intervals} and~\ref{sect:examples} below.

In~\cite{Walsh1958} Walsh used Theorem~\ref{thm:conformal_map} for proving the 
existence of a direct generalization of the (classical) Faber polynomials
to sets $E$ with several components.
The second major ingredient is the following.  For the unit disk, the monomials 
$w^k$ are fundamental for Taylor and Laurent series of analytic functions, and 
the zeros of $w^k$ are at the center of the unit disk.  For a lemniscatic 
domain $\cL$, we need a generalization of $w^k$ to a polynomial with zeros at 
the foci $a_1, \ldots, a_N$ of $\cL$, and the multiplicity of each zero 
$a_j$ must correspond to its ``importance'' for $\cL$, given by the exponent 
$m_j$.

\begin{lemma}[{\cite[Lemma~2]{Walsh1958}}] \label{lem:alpha_n}
Let $\cL$ be a lemniscatic domain as in~\eqref{eqn:lemniscate}.
\begin{enumerate}
\item There exists a sequence $(\alpha_j)_{j=1}^\infty$, chosen from the 
foci $a_1, \ldots, a_N$, such that
\begin{equation}
\abs{N_{k,j} - k m_j} \leq A \quad \text{for } j = 1, 2, \ldots, N, \quad k = 
1, 2, \ldots,
\label{eqn:alpha_frequency}
\end{equation}
where $N_{k,j}$ denotes the number of times $a_j$ appears in the 
sequence $\alpha_1, \ldots$, $\alpha_k$, and where $A > 0$ is a constant.

\item Any such sequence has the following property:
For any closed set $S \subseteq \widehat{\C}$ not containing any of the points 
$a_1, \ldots, a_N$ there exist constants $A_1, A_2 > 0$, such that
\begin{equation}
A_1 < \frac{ \abs{u_k(w)} }{ \abs{U(w)}^k } < A_2 \quad \text{for } k = 
0, 1, 2, \ldots, \text{ and any } w \in S, \label{eqn:double_bound_un}
\end{equation}
where $u_k(w) \coloneq \prod_{j=1}^k (w - \alpha_j) = \prod_{j=1}^N 
(w-a_j)^{N_{k,j}}$.
\end{enumerate}
\end{lemma}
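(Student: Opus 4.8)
The plan is to treat the two parts separately. Part~1 is a purely combinatorial statement about balanced sequences over the finite alphabet $\{a_1,\dots,a_N\}$, while part~2 is an analytic estimate that uses only the frequency bound~\eqref{eqn:alpha_frequency} together with the factorization $u_k(w) = \prod_{j=1}^N (w-a_j)^{N_{k,j}}$ and the corresponding identity $\abs{U(w)}^k = \prod_{j=1}^N \abs{w-a_j}^{km_j}$.

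For part~1 I would construct the sequence greedily. Having chosen $\alpha_1,\dots,\alpha_{k-1}$ with current multiplicities $N_{k-1,1},\dots,N_{k-1,N}$, set $\alpha_k \coloneq a_{j^*}$, where $j^*$ maximizes the \emph{deficit} $km_j - N_{k-1,j}$ (ties broken arbitrarily). Writing $d_{k,j} \coloneq km_j - N_{k,j}$, there are two bookkeeping facts. First, $\sum_{j=1}^N d_{k,j} = k - k = 0$ for every $k$, since exactly $k$ letters have been chosen and $\sum_j m_j = 1$. Second, at step $k$ the numbers $km_j - N_{k-1,j}$ sum to $k-(k-1)=1$, so their maximum is at least $\tfrac1N$; hence after incrementing, $d_{k,j^*} = (km_{j^*}-N_{k-1,j^*}) - 1 \geq \tfrac1N - 1 > -1$, whereas $d_{k,j} = d_{k-1,j} + m_j$ can only increase (as $m_j > 0$) for $j\neq j^*$. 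An induction on $k$ with base case $d_{0,j}=0$ now gives $d_{k,j} \geq -1$ for all $k,j$, and combining this with $\sum_i d_{k,i}=0$ yields $d_{k,j} = -\sum_{i\neq j} d_{k,i} \leq N-1$. Therefore $\abs{N_{k,j}-km_j} = \abs{d_{k,j}} \leq N-1$, so~\eqref{eqn:alpha_frequency} holds with $A = N$ (the case $N=1$ being trivial, since then $N_{k,1}=k=km_1$).

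For part~2, let $(\alpha_j)_{j=1}^\infty$ be any sequence satisfying~\eqref{eqn:alpha_frequency} and set $g_k(w) \coloneq \log\bigl(\abs{u_k(w)}/\abs{U(w)}^k\bigr)$. From the two identities above,
\begin{equation*}
g_k(w) = \sum_{j=1}^N (N_{k,j}-km_j)\log\abs{w-a_j} = \sum_{j=2}^N (N_{k,j}-km_j)\log\abslr{\frac{w-a_j}{w-a_1}},
\end{equation*}
where the second equality uses $\sum_{j=1}^N(N_{k,j}-km_j)=0$ to subtract $\log\abs{w-a_1}$ from each term. For each fixed $j$, the function $w\mapsto \abs{(w-a_j)/(w-a_1)}$ is continuous, strictly positive, and finite on $S$: as a rational function its only zero is $a_j$ and its only pole is $a_1$, both excluded from $S$, and it has the finite nonzero limit $1$ at $\infty$, so the argument is insensitive to whether $\infty\in S$. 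Since $S$ is a closed subset of the compact space $\widehat{\C}$, it is compact, hence $\bigl|\log\abs{(w-a_j)/(w-a_1)}\bigr| \leq M$ on $S$ for some constant $M<\infty$. Using $\abs{N_{k,j}-km_j}\leq A$ from~\eqref{eqn:alpha_frequency}, we obtain $\abs{g_k(w)} \leq (N-1)AM$ for all $k$ and all $w\in S$, and choosing $A_1<e^{-(N-1)AM}$ and $A_2>e^{(N-1)AM}$ gives~\eqref{eqn:double_bound_un}.

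The induction in part~1 and the compactness argument in part~2 are routine; the one genuinely delicate point is in part~2, where the naive estimate $\abs{g_k(w)}\leq A\sum_j\abs{\log\abs{w-a_j}}$ is useless for unbounded $S$, so it is essential to exploit the cancellation $\sum_j(N_{k,j}-km_j)=0$ in order to replace each $\log\abs{w-a_j}$ by the bounded quantity $\log\abs{(w-a_j)/(w-a_1)}$. In part~1 the only thing to get right is the choice of greedy rule and the observation that the deficits always sum to zero.
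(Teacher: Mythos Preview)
The paper does not give its own proof of this lemma---it is quoted from Walsh~\cite{Walsh1958} without argument---so there is no in-paper proof to compare against. Your proof is correct in both parts: the greedy construction in part~1, with the invariant $d_{k,j}\geq -1$ and the zero-sum identity $\sum_j d_{k,j}=0$, cleanly yields $|N_{k,j}-km_j|\leq N-1$; and in part~2 the decisive move is exactly the one you flag, namely using $\sum_j(N_{k,j}-km_j)=0$ to replace the unbounded $\log|w-a_j|$ by the bounded quotients $\log\bigl|(w-a_j)/(w-a_1)\bigr|$, after which compactness of $S$ in $\widehat{\C}$ (with the correct behaviour at $\infty$) finishes the job.

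One small remark: the paper, in the discussion immediately following the lemma, mentions for $N=2$ the alternative explicit rule $\alpha_j=a_1$ if $\lfloor jm_1\rfloor>\lfloor(j-1)m_1\rfloor$ and $\alpha_j=a_2$ otherwise. This floor-function construction (which generalizes to arbitrary $N$) is a different route to part~1 than your greedy scheme, but both produce a bound of the same order and either is perfectly acceptable.
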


For $N = 1$ a lemniscatic domain is the exterior of a disk, and we have 
$\alpha_j = a_1$ for all $j \geq 1$ and $u_k(w) = (w-a_1)^k$.
For $N \geq 2$, the sequence $(\alpha_j)_{j=1}^\infty$ is \emph{not unique}, 
but it can be chosen constructively from $a_1, \ldots, a_N$; 
see~\cite{Walsh1958}.
Note that a smaller constant $A$ in~\eqref{eqn:alpha_frequency} implies better 
bounds in~\eqref{eqn:double_bound_un}.
For $N = 2$ one possible choice is $\alpha_j = a_1$ if $\lfloor j m_1 \rfloor > 
\lfloor (j-1) m_1 \rfloor$, and $\alpha_j = a_2$ otherwise, where $\lfloor 
\cdot \rfloor$ denotes the integer part.
We use this choice in our examples in Sections~\ref{sect:intervals} 
and~\ref{subsec:disks} below.

In the notation of Theorem~\ref{thm:conformal_map},  the Green's functions with 
pole at infinity for $\cL$ and $\cK = \widehat{\C} \backslash E$ are
\begin{equation}
g_\cL(w) = \log \abs{U(w)} - \log(\mu) \quad \text{and} \quad
g_\cK(z) = g_\cL(\Phi(z)),
\label{eqn:greens_fct}
\end{equation}
respectively; see~\cite{SeteLiesen2015,Walsh1958}.
For $\sigma > 1$ we denote their level curves by
\begin{equation*}
\begin{split}
\Gamma_\sigma &= \{ z \in \cK : g_\cK(z) = \log(\sigma) \}, \\
\Lambda_\sigma &= \{ w \in \cL : g_\cL(w) = \log(\sigma) \} = \{ w \in \cL : 
\abs{U(w)} = \sigma \mu \}.
\end{split}
\end{equation*}
Note that $\Phi(\Gamma_\sigma) = \Lambda_\sigma$.  Further, we denote by 
$\interior$ and $\exterior$ the interior and exterior of a closed curve (or 
union of closed curves), respectively.  In particular, we have
\begin{equation*}
\interior(\Lambda_\sigma) = \{ w \in \widehat{\C} : \abs{U(w)} < \sigma \mu \},
\quad
\exterior(\Lambda_\sigma) = \{ w \in \widehat{\C} : \abs{U(w)} > \sigma \mu \}.
\end{equation*}
We can now state Walsh's main result from~\cite{Walsh1958}.

\begin{theorem}[{\cite[Theorem~3]{Walsh1958}}] \label{thm:fw_poly}
Let $E$, $\cL$ and $\psi = \Phi^{-1}$ be as in Theorem~\ref{thm:conformal_map}.
Let $(\alpha_j)_{j=1}^\infty$ and the corresponding polynomials $u_k(w)$, $k = 
0, 1, \ldots$, be as in Lemma~\ref{lem:alpha_n}.
Then the following hold:
\begin{enumerate}
\item For $z \in \Gamma_\sigma$ and $w \in \exterior(\Lambda_\sigma)$ we have
\begin{equation}
\frac{ \psi'(w) }{\psi(w)-z} = \sum_{k=0}^\infty \frac{ b_k(z) }{ u_{k+1}(w) }, 
\label{eqn:gen_function}
\end{equation}
where
\begin{equation}
b_k(z)
= \frac{1}{2 \pi i} \int_{\Lambda_\lambda} u_k(\tau) \frac{ 
\psi'(\tau) }{ \psi(\tau) - z } \, d\tau
= \frac{1}{2 \pi i} \int_{\Gamma_\lambda} \frac{u_k(\Phi(\zeta))}{\zeta-z} 
\,d\zeta
\label{eqn:def_bk}
\end{equation}
for any $\lambda>\sigma$.  The function $b_k$ is a monic polynomial of
degree $k$, which is called the \emph{$k$th Faber--Walsh polynomial}
for $E$ and $(\alpha_j)_{j=1}^\infty$.

\item Let $f$ be analytic on $E$, and let $\rho>1$ be the largest number such
that $f$ is analytic and single-valued in $\interior(\Gamma_\rho)$.
Then $f$ has a unique representation as a \emph{Faber--Walsh series}
\begin{equation*}
f(z) = \sum_{k=0}^\infty a_k b_k(z), \quad a_k = \frac{1}{2 \pi i}
\int_{\Lambda_\lambda} \frac{ f(\psi(\tau)) }{ u_{k+1}(\tau) } \, d\tau,
\quad 1 < \lambda < \rho,
\end{equation*}
which converges absolutely in $\interior(\Gamma_\rho)$ and maximally on $E$, 
i.e.,
\begin{equation*}
\limsup_{n \to \infty} \norm{ f - \sum_{k=0}^n a_k b_k }_E^{\frac{1}{n}}
= \frac{1}{\rho},
\end{equation*}
where $\norm{\cdot}_E$ denotes the maximum norm on $E$.
\end{enumerate}
\end{theorem}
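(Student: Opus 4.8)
\emph{Proof strategy.} The plan is to imitate the classical construction of Faber polynomials, replacing the monomials $w^{k}$ (which are fundamental for the exterior of a disk) by the polynomials $u_{k}$ of Lemma~\ref{lem:alpha_n}, and to keep all the resulting series under control by means of the two-sided bound~\eqref{eqn:double_bound_un}. Throughout I would use that $\psi$ maps $\exterior(\Lambda_{\sigma})$ conformally onto $\exterior(\Gamma_{\sigma})$ with $\psi(w)=w+\cO(1/w)$ near infinity and $\Phi(\Gamma_{\sigma})=\Lambda_{\sigma}$, and that the foci $a_{1},\dots,a_{N}$ lie in $\interior(\Lambda_{\sigma})$ for every $\sigma>1$, so that~\eqref{eqn:double_bound_un} is available on every level curve in sight.

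For part~1, the first step is the telescoping identity
\[
\frac{1}{w-\tau}=\sum_{k=0}^{n-1}\frac{u_{k}(\tau)}{u_{k+1}(w)}+\frac{u_{n}(\tau)}{u_{n}(w)}\cdot\frac{1}{w-\tau},
\]
which I would prove by induction on $n$ from $u_{k+1}(w)=(w-\alpha_{k+1})u_{k}(w)$ and $\frac{1}{w-\tau}-\frac{\tau-\alpha}{w-\alpha}\cdot\frac{1}{w-\tau}=\frac{1}{w-\alpha}$. Fixing $z\in\Gamma_{\sigma}$ and $w\in\exterior(\Lambda_{\sigma})$, the map $\tau\mapsto\psi'(\tau)/(\psi(\tau)-z)$ is analytic on $\exterior(\Lambda_{\sigma})$ (its only pole, at $\tau=\Phi(z)$, sits on $\Lambda_{\sigma}$) and behaves like $1/\tau$ at infinity, so Cauchy's formula in the exterior domain gives, for any $\lambda$ with $\sigma<\lambda<\abs{U(w)}/\mu$,
\[
\frac{\psi'(w)}{\psi(w)-z}=\frac{1}{2\pi i}\int_{\Lambda_{\lambda}}\frac{1}{w-\tau}\cdot\frac{\psi'(\tau)}{\psi(\tau)-z}\,d\tau.
\]
Inserting the telescoping identity and using the definition~\eqref{eqn:def_bk} turns the first $n$ terms into $\sum_{k=0}^{n-1}b_{k}(z)/u_{k+1}(w)$, while the remainder is at most a constant times $(\lambda\mu/\abs{U(w)})^{n}$ because $\abs{u_{n}(\tau)}\le A_{2}(\lambda\mu)^{n}$ on $\Lambda_{\lambda}$ and $\abs{u_{n}(w)}\ge A_{1}\abs{U(w)}^{n}$ by~\eqref{eqn:double_bound_un}; since $\lambda\mu<\abs{U(w)}$ this tends to $0$, proving~\eqref{eqn:gen_function}. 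The two formulas in~\eqref{eqn:def_bk} coincide by the substitution $\tau=\Phi(\zeta)$. To see that $b_{k}$ is a monic polynomial of degree $k$ I would read off the second formula: $u_{k}\circ\Phi$ is analytic on $\exterior(\Gamma_{\lambda})$ with a pole of order exactly $k$ at infinity (as $u_{k}$ is monic of degree $k$ and $\Phi(z)=z+\cO(1/z)$), so its Laurent expansion there is $p_{k}+h_{k}$ with $p_{k}$ monic of degree $k$ and $h_{k}$ analytic on $\exterior(\Gamma_{\lambda})$ vanishing at infinity; the contour integral over $\Gamma_{\lambda}$ then reproduces $p_{k}(z)$ for $z\in\interior(\Gamma_{\lambda})$, so $b_{k}=p_{k}$.

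For part~2, fix $1<r<\rho$. Applying Cauchy's formula on each component of $\interior(\Gamma_{r})$ gives $f(z)=\frac{1}{2\pi i}\int_{\Gamma_{r}}f(\zeta)/(\zeta-z)\,d\zeta$ for $z\in\interior(\Gamma_{r})$; substituting $\zeta=\psi(w)$, inserting~\eqref{eqn:gen_function} (valid first for $z\in\interior(\Gamma_{r})\setminus E$, where $z\in\Gamma_{\sigma}$ with $1<\sigma<r$ and $\Lambda_{r}\subseteq\exterior(\Lambda_{\sigma})$), and interchanging sum and integral -- legitimate by uniform convergence of~\eqref{eqn:gen_function} on the compact curve $\Lambda_{r}$ -- yields $f=\sum_{k}a_{k}b_{k}$ on $\interior(\Gamma_{r})\setminus E$ with $a_{k}=\frac{1}{2\pi i}\int_{\Lambda_{r}}f(\psi(w))/u_{k+1}(w)\,dw$, independent of $r$ by Cauchy's theorem. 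The estimates $\abs{a_{k}}\le C(\lambda\mu)^{-k-1}$ (from the integral over $\Lambda_{\lambda}$, any $1<\lambda<\rho$) and $\abs{b_{k}(z)}\le C_{\nu}(\nu\mu)^{k}$ on $\overline{\interior(\Gamma_{\nu})}$ (from the $\Gamma_{\nu}$-formula of~\eqref{eqn:def_bk} with~\eqref{eqn:double_bound_un} and the maximum principle, any $\nu>1$) then give absolute and locally uniform convergence of $\sum a_{k}b_{k}$ on $\interior(\Gamma_{\rho})$; its sum is analytic there and agrees with $f$ on the nonempty open set $\interior(\Gamma_{\rho})\setminus E$, hence equals $f$ on all of $\interior(\Gamma_{\rho})$ by the identity theorem. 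For uniqueness I would establish the biorthogonality $\frac{1}{2\pi i}\int_{\Lambda_{\lambda}}b_{j}(\psi(w))/u_{k+1}(w)\,dw=\delta_{jk}$: by part~1 one has $b_{j}(\psi(w))=u_{j}(w)-h_{j}(\psi(w))$ with $h_{j}$ analytic on $\exterior(\Lambda_{\lambda})$ and vanishing at infinity, so the integral reduces to $\frac{1}{2\pi i}\int_{\Lambda_{\lambda}}u_{j}(w)/u_{k+1}(w)\,dw$, and since $N_{m,i}$ is nondecreasing in $m$ this quotient is a polynomial when $j\ge k+1$, a proper rational function whose finite residues sum to $0$ when $j<k$, and $(w-\alpha_{k+1})^{-1}$ when $j=k$; feeding $f=\sum a_{j}b_{j}$ into the formula for $a_{k}$ then forces the coefficients.

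Finally, for the rate of convergence, the bound $\limsup_{n}\norm{f-\sum_{k=0}^{n}a_{k}b_{k}}_{E}^{1/n}\le1/\rho$ follows by combining $\abs{a_{k}}\le C(\lambda\mu)^{-k-1}$ with $\norm{b_{k}}_{E}\le C(\nu\mu)^{k}$ and letting $\nu\downarrow1$, $\lambda\uparrow\rho$. For the reverse inequality I would argue by contradiction: if $\limsup_{n}\norm{f-\sum_{0}^{n}a_{k}b_{k}}_{E}^{1/n}=1/\rho'<1/\rho$, then $\limsup_{k}\norm{a_{k}b_{k}}_{E}^{1/k}\le1/\rho'$, and since $b_{k}$ is monic of degree $k$ the classical lower bound $\norm{b_{k}}_{E}\ge\mu^{k}$ (valid on any compact set of logarithmic capacity $\mu$) yields $\limsup_{k}\abs{a_{k}}^{1/k}\le1/(\rho'\mu)$; together with $\limsup_{k}\norm{b_{k}}_{\Gamma_{\lambda}}^{1/k}\le\lambda\mu$ this makes $\sum a_{k}b_{k}$ converge uniformly on $\overline{\interior(\Gamma_{\lambda})}$ for every $\lambda\in(\rho,\rho')$, producing a single-valued analytic continuation of $f$ to $\interior(\Gamma_{\lambda})$ with $\lambda>\rho$ and contradicting the maximality of $\rho$. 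I expect this last step -- pinning down the sharp lower bound $\norm{b_{k}}_{E}\ge\mu^{k}$ for the monic polynomials $b_{k}$ and carrying out the continuation argument cleanly across the several components -- to be the main obstacle; the estimates in parts~1 and~2 are routine once~\eqref{eqn:double_bound_un} is in hand.
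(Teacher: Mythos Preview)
The paper does not prove this theorem. Theorem~\ref{thm:fw_poly} is quoted verbatim from Walsh's original 1958 paper (the citation \texttt{[Theorem~3]\{Walsh1958\}} in the theorem header signals this), and no proof is supplied here; the paper merely states the result, records the consequence~\eqref{eqn:double_bound_bn} from Suetin, and moves on to Proposition~\ref{prop:recursion_bn}. So there is no ``paper's own proof'' against which to compare your attempt.

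That said, your sketch is a faithful reconstruction of the classical Faber argument with $u_k$ in place of $w^k$, and the main steps are sound. A few places where your write-up is slightly loose but easily repaired: in bounding $\abs{b_k(z)}$ on $\overline{\interior(\Gamma_\nu)}$ you should integrate over $\Gamma_{\nu'}$ with $\nu'>\nu$ (so that $\abs{\zeta-z}$ stays bounded below) rather than over $\Gamma_\nu$ itself; in the biorthogonality step, the assertion that the finite residues of $u_j/u_{k+1}$ sum to $0$ when $j<k$ deserves one line of justification (the rational function is $\cO(1/w^2)$ at infinity, so its residue there vanishes and the global residue theorem gives the claim); and in the final continuation argument you silently use $\limsup(\abs{a_k}^{1/k}\cdot\norm{b_k}_{\Gamma_\lambda}^{1/k})\le\limsup\abs{a_k}^{1/k}\cdot\limsup\norm{b_k}_{\Gamma_\lambda}^{1/k}$, which is true but worth stating. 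The lower bound $\norm{b_k}_E\ge\mu^k$ that you flag as the ``main obstacle'' is a standard potential-theoretic fact (indeed the present paper invokes it in the proof of Proposition~\ref{prop:FW_asympt_opt}, citing Ransford), so this is not an obstacle at all.
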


Note that the assertions about the Faber--Walsh polynomials hold for any 
admissible sequence $(\alpha_j)_{j=1}^\infty$ as in Lemma~\ref{lem:alpha_n}.  
In this article, if we do not explicitly mention the sequence, the 
corresponding results hold for any such sequence.

For $N = 1$ the Faber--Walsh polynomials reduce to the monic Faber 
polynomials for the (simply connected) set $E$ as considered 
in~\cite{Ellacott1983,Markushevich1967,SmirnovLebedev}.

For an entire function $f$ we have $\rho = \infty$ and hence 
$\interior(\Gamma_\rho) = \C$ in part \textit{2.}\@ of the theorem.

In our proof of Proposition~\ref{prop:FW_asympt_opt} below we will use
that for each given $\sigma > 1$ there exists positive constants $C_1, C_2$
independent of $k$ such that
\begin{equation}
0 < C_1 \abs{u_k(\Phi(z))} \leq \abs{b_k(z)} \leq C_2 \abs{u_k(\Phi(z))} \quad 
\text{for } z \in \Gamma_\sigma.
\label{eqn:double_bound_bn}
\end{equation}
Here the upper bound on $\abs{b_k(z)}$ holds for all $k$ and the lower bound 
holds only for sufficiently large $k$; see~\cite[p.~253]{Suetin1998}.

In~\eqref{eqn:gen_function}--\eqref{eqn:def_bk} the Faber--Walsh polynomials 
are defined as the (polynomial) coefficients in the expansion of the function
$\psi'(w) /(\psi(w)-z)$.  Similar to the (classical) Faber polynomials,
the Faber--Walsh polynomials can also be defined using the coefficients
of the Laurent series of the conformal map $\psi$ in a neighborhood of
infinity.  Using this approach one can derive a recursive formula for computing
the Faber--Walsh polynomials.  In the following result we state the recursion
that we have used in our numerical computations that are described in
Section~\ref{subsec:sym_ints}.  A variant of this recursion was first
published in the technical report~\cite{Sete2013}.

\begin{proposition} \label{prop:recursion_bn}
In the notation of Theorem~\ref{thm:fw_poly}, the Laurent series at 
infinity of the conformal map $\psi = \Phi^{-1}$ has the form
\begin{equation*}
\psi(w) = w + \sum_{k=1}^\infty \frac{c_k}{w^k}.
\end{equation*}
Then the Faber--Walsh polynomials are recursively given by
\begin{equation*}
\begin{split}
b_0(z) &= 1 \\
b_k(z) &= (z-\alpha_k) b_{k-1}(z) + \beta_{k-1,1}(z), \quad k \geq 1,
\end{split}
\end{equation*}
where the $\beta_{k,\ell}(z)$ are polynomials given by $\beta_{0,1}(z)=0$ and
\begin{equation*}
\begin{split}
\beta_{1,\ell}(z) &= \alpha_1 (\ell-1) c_{\ell-1} - (\ell+1) c_\ell, \quad
\ell \geq 1, \\
\beta_{k,\ell}(z) &= -c_\ell b_{k-1}(z) - \alpha_k \beta_{k-1,\ell}(z) +
\beta_{k-1,\ell+1}(z), \quad k \geq 2, \quad \ell \geq 1,
\end{split}
\end{equation*}
with $c_0 = 0$.
\end{proposition}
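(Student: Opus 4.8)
The plan is to read the recursion off the generating function~\eqref{eqn:gen_function}. Throughout, for a Laurent series $F$ at $w=\infty$ write $[w^{-\ell}]F$ for its coefficient of $w^{-\ell}$. Put $\psi(w)=w+h(w)$ with $h(w)=\sum_{k\geq 1}c_kw^{-k}$ — such an expansion, with vanishing constant term, exists because $\Phi(z)=z+\cO(1/z)$ near infinity in Theorem~\ref{thm:conformal_map} — and abbreviate $B(w,z)\coloneq \psi'(w)/(\psi(w)-z)=\sum_{k\geq 0}b_k(z)/u_{k+1}(w)$, all series below being absolutely convergent for $w$ near infinity. Substituting $\psi(w)-z=(w-z)+h(w)$ and $\psi'(w)=1+h'(w)$ into $\psi'(w)=(\psi(w)-z)B(w,z)$, using the partial-fraction identity $(w-z)/u_{k+1}(w)=1/u_k(w)-(z-\alpha_{k+1})/u_{k+1}(w)$ and reindexing $\sum_k b_k/u_k$, one obtains
\begin{equation*}
1+h'(w)=b_0(z)+\sum_{k=0}^{\infty}\frac{\gamma_k(z)}{u_{k+1}(w)}+h(w)B(w,z),\qquad \gamma_k(z)\coloneq b_{k+1}(z)-(z-\alpha_{k+1})b_k(z).
\end{equation*}
Letting $w\to\infty$ gives $b_0(z)=1$, and subtracting this off leaves the functional equation
\begin{equation}
h'(w)-h(w)B(w,z)=\sum_{k=0}^{\infty}\frac{\gamma_k(z)}{u_{k+1}(w)}.\tag{$\ast$}
\end{equation}
Because $[w^{-1}]\bigl(u_k(w)/u_{j+1}(w)\bigr)=\delta_{jk}$ (for $j<k$ it is a polynomial, for $j>k$ it is $\cO(w^{-2})$, for $j=k$ it is $1/(w-\alpha_{k+1})$), multiplying~$(\ast)$ by $u_k(w)$ and extracting the $w^{-1}$-coefficient gives
\begin{equation}
\gamma_k(z)=[w^{-1}]\,V_k(w,z),\qquad V_k(w,z)\coloneq u_k(w)\bigl(h'(w)-h(w)B(w,z)\bigr),\tag{$\ast\ast$}
\end{equation}
and clearly $V_0=h'-hB$ and $V_k=(w-\alpha_k)V_{k-1}$.

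Next I would introduce the computable surrogate sequence: set $\widetilde V_0\coloneq h'$ and $\widetilde V_k\coloneq (w-\alpha_k)\widetilde V_{k-1}-h(w)b_{k-1}(z)$ for $k\geq 1$, and \emph{define} $\beta_{k,\ell}(z)\coloneq[w^{-\ell}]\widetilde V_k(w,z)$. Since the $b_j$ are polynomials in $z$, so is each $\beta_{k,\ell}$. From $\widetilde V_0=h'=-\sum_{m\geq 1}mc_mw^{-m-1}$ we read off $\beta_{0,1}=[w^{-1}]h'=0$; and $\widetilde V_1=(w-\alpha_1)h'-h$, whose $w^{-\ell}$-coefficient is $\alpha_1(\ell-1)c_{\ell-1}-(\ell+1)c_\ell$ (with $c_0=0$), exactly as in the statement. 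For $k\geq 2$: multiplying the polynomial part of $\widetilde V_{k-1}$ by $(w-\alpha_k)$ produces a polynomial and so does not touch negative powers; multiplying the principal (negative-power) part by $(w-\alpha_k)$ sends the $w^{-\ell}$-coefficient to $\beta_{k-1,\ell+1}-\alpha_k\beta_{k-1,\ell}$; and $-h(w)b_{k-1}(z)$ contributes $-c_\ell b_{k-1}(z)$. Adding these three contributions yields precisely $\beta_{k,\ell}=-c_\ell b_{k-1}-\alpha_k\beta_{k-1,\ell}+\beta_{k-1,\ell+1}$, the claimed recursion.

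It then remains to identify $\beta_{k,1}(z)$ with $\gamma_k(z)$, which by~$(\ast\ast)$ reduces to showing that $\widetilde V_k$ and $V_k$ have the same $w^{-1}$-coefficient, i.e.\ that $D_k\coloneq\widetilde V_k-V_k=\cO(w^{-2})$ as $w\to\infty$. From the two recursions, $D_0=h'-(h'-hB)=h(w)B(w,z)$ and $D_k=(w-\alpha_k)D_{k-1}-h(w)b_{k-1}(z)$; this solves (by induction on $k$) to
\begin{equation*}
D_k=h(w)\,u_k(w)\Bigl(B(w,z)-\sum_{j=0}^{k-1}\frac{b_j(z)}{u_{j+1}(w)}\Bigr)=h(w)\sum_{j=k}^{\infty}\frac{u_k(w)\,b_j(z)}{u_{j+1}(w)}.
\end{equation*}
The tail series equals $b_k(z)/(w-\alpha_{k+1})+\cO(w^{-2})=\cO(w^{-1})$ and $h(w)=\cO(w^{-1})$, so $D_k=\cO(w^{-2})$; hence $\beta_{k,1}=\gamma_k=b_{k+1}-(z-\alpha_{k+1})b_k$, i.e.\ $b_k(z)=(z-\alpha_k)b_{k-1}(z)+\beta_{k-1,1}(z)$ for $k\geq 1$. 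Combined with $b_0=1$ and the base cases and recursion for $\beta_{k,\ell}$ from the previous paragraph, this is the assertion of the proposition.

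I expect the only genuine obstacle to be the right choice of the surrogate $\widetilde V_k$. The sequence $V_k$ coming straight out of~$(\ast)$ already pins down $\beta_{k,1}=\gamma_k$ via its $w^{-1}$-coefficient, but its higher coefficients $[w^{-\ell}]V_k$ still carry $B(w,z)$ and do \emph{not} equal the polynomials $\beta_{k,\ell}$ of the statement; replacing the $B(w,z)$-terms by the already-computed polynomials $b_j(z)$ gives the computable recursion, and the estimate $D_k=\widetilde V_k-V_k=\cO(w^{-2})$ is exactly what certifies that this replacement does not disturb the $w^{-1}$-coefficient. The remaining steps are routine bookkeeping with Laurent series at infinity, together with a brief check that the rearrangements of the (absolutely convergent) series are legitimate for $w$ near infinity.
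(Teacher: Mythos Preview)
The paper does not actually prove this proposition; it is stated without proof, with a pointer to the technical report~\cite{Sete2013} for a variant of the recursion. There is therefore no argument in the paper to compare yours against.

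That said, your proof is correct and complete. The partial-fraction identity $(w-z)/u_{k+1}(w)=1/u_k(w)-(z-\alpha_{k+1})/u_{k+1}(w)$ together with the orthogonality $[w^{-1}]\bigl(u_k(w)/u_{j+1}(w)\bigr)=\delta_{jk}$ cleanly isolates $\gamma_k(z)=b_{k+1}(z)-(z-\alpha_{k+1})b_k(z)$ as the $w^{-1}$-coefficient of $V_k=u_k(h'-hB)$. Your surrogate $\widetilde V_k$, defined by $\widetilde V_0=h'$ and $\widetilde V_k=(w-\alpha_k)\widetilde V_{k-1}-h\,b_{k-1}$, is the right object: its Laurent coefficients $\beta_{k,\ell}=[w^{-\ell}]\widetilde V_k$ satisfy exactly the recursion in the statement, and the closed form $D_k=\widetilde V_k-V_k=h(w)\sum_{j\geq k}u_k(w)b_j(z)/u_{j+1}(w)=\cO(w^{-2})$ shows that $\widetilde V_k$ and $V_k$ share the same $w^{-1}$-coefficient, so $\beta_{k,1}=\gamma_k$. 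The base cases $\beta_{0,1}=0$ and $\beta_{1,\ell}=\alpha_1(\ell-1)c_{\ell-1}-(\ell+1)c_\ell$ check out directly from $\widetilde V_0=h'$ and $\widetilde V_1=(w-\alpha_1)h'-h$. The only point worth making slightly more explicit is that when you multiply the principal part of $\widetilde V_{k-1}$ by $w$, the leading term $\beta_{k-1,1}$ is promoted to a constant and hence does not contribute to any $[w^{-\ell}]$ with $\ell\geq 1$; you use this implicitly, and it is correct.
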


%% End of section 
 
\section{On the theory of Faber--Walsh polynomials}
\label{sect:theory}

In this section we derive several new results about Faber--Walsh polynomials.
We begin with an alternative representation, and then relate Faber polynomials
for a simply connected set $\Omega$ to the Faber--Walsh polynomials for a
polynomial pre-image of $\Omega$.  Finally, we will show that the normalized
Faber--Walsh polynomials are asymptotically optimal.

Our first result is an easy consequence of Theorem~\ref{thm:fw_poly}.

\begin{corollary} \label{cor:bn_poly_part}
In the notation of Theorem~\ref{thm:fw_poly}, the $k$th Faber--Walsh
polynomial $b_k(z)$ is the polynomial part of the Laurent series at infinity
of $u_k(\Phi(z))$.
\end{corollary}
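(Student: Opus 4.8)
The plan is to start from the defining contour integral for $b_k$ and massage it into a residue/Laurent-series statement about $u_k(\Phi(z))$. By Theorem~\ref{thm:fw_poly}, for $z \in \Gamma_\sigma$ and any $\lambda > \sigma$ we have
\begin{equation*}
b_k(z) = \frac{1}{2\pi i} \int_{\Gamma_\lambda} \frac{u_k(\Phi(\zeta))}{\zeta - z}\, d\zeta.
\end{equation*}
The function $\zeta \mapsto u_k(\Phi(\zeta))$ is analytic in $\cK = \widehat{\C}\setminus E$ away from $\infty$, and near $\infty$ it has a Laurent expansion: since $\Phi(\zeta) = \zeta + \cO(1/\zeta)$ and $u_k$ is a monic polynomial of degree $k$, we may write $u_k(\Phi(\zeta)) = p_k(\zeta) + r_k(\zeta)$, where $p_k$ is the polynomial part (a monic polynomial of degree $k$) and $r_k(\zeta) = \cO(1/\zeta)$ is the principal part, analytic in $\exterior(\Gamma_\rho)$ for every $\rho > 1$ and vanishing at $\infty$. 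The claim is precisely that $b_k = p_k$.

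First I would fix $z$ inside $\Gamma_\lambda$ and deform the contour: $\Gamma_\lambda$ separates $z$ (and all of $E$) from $\infty$, so $\exterior(\Gamma_\lambda)$ is an annular neighbourhood of $\infty$ on which $r_k$ is analytic and decays. Shrinking $\Gamma_\lambda$ toward a large circle $|\zeta| = R$ and applying the residue theorem to $r_k(\zeta)/(\zeta - z)$ on the exterior region (where the only singularity is the simple pole at $\zeta = \infty$, around which $r_k(\zeta)/(\zeta-z) = \cO(1/\zeta^2)$, contributing nothing) gives
\begin{equation*}
\frac{1}{2\pi i}\int_{\Gamma_\lambda} \frac{r_k(\zeta)}{\zeta - z}\, d\zeta = 0.
\end{equation*}
Hence the integral defining $b_k(z)$ equals $\frac{1}{2\pi i}\int_{\Gamma_\lambda} \frac{p_k(\zeta)}{\zeta-z}\, d\zeta$, and since $p_k$ is a polynomial analytic everywhere and $z$ lies inside $\Gamma_\lambda$, Cauchy's integral formula yields exactly $p_k(z)$. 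Therefore $b_k(z) = p_k(z)$ for all $z$ in the (open, nonempty) region interior to some $\Gamma_\lambda$, and since both sides are polynomials they agree identically.

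The main technical point — and the step I would be most careful about — is justifying the contour deformation and the vanishing of the $r_k$-integral, i.e.\ controlling the behaviour at infinity. One has to confirm that $r_k(\zeta)$ really is $\cO(1/\zeta)$ uniformly enough to push $\Gamma_\lambda$ out to a circle of radius $R \to \infty$; this follows from the uniform convergence of the Laurent series of $\Phi$ (hence of $u_k \circ \Phi$) on a neighbourhood of $\infty$, but it should be stated explicitly. An alternative, perhaps cleaner, route avoids deformation entirely: expand $1/(\zeta - z) = \sum_{m\ge 0} z^m/\zeta^{m+1}$ for $|\zeta|$ large on $\Gamma_\lambda$, integrate term by term against the Laurent series $u_k(\Phi(\zeta)) = \zeta^k + (\text{lower order}) + \cO(1/\zeta)$, and read off that $b_k(z)$ is the sum $\sum_{m} z^m \cdot [\text{coefficient of }\zeta^{-m-1}\text{ in }\zeta^{m+1}u_k(\Phi(\zeta))]$ — which picks out precisely the nonnegative-power part of $u_k(\Phi(\zeta))$, i.e.\ $p_k(z)$. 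Either way the corollary follows; the residue-theorem phrasing is shorter to write, so that is what I would present.
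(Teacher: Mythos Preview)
Your proof is correct and follows essentially the same route as the paper: write $u_k(\Phi(\zeta)) = p_k(\zeta) + (\text{terms in } 1/\zeta)$, split the defining integral~\eqref{eqn:def_bk}, recover $p_k(z)$ from Cauchy's formula, and show the remainder integral vanishes. The paper dispatches that last step in one line by citing Cauchy's integral formula for domains containing $\infty$ (Markushevich, Problem~14.14), which is exactly the content of your contour-pushing argument; one small wording slip---you call $\zeta=\infty$ a ``simple pole'' of $r_k(\zeta)/(\zeta-z)$ when in fact it is a removable singularity (indeed a zero), which is precisely why the contribution vanishes.
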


\begin{proof}
By Theorem~\ref{thm:conformal_map}, the Laurent series at infinity of the
lemniscatic map $\Phi$ has the form
$\Phi(\zeta) = \zeta + \sum_{j=1}^\infty \frac{d_j}{\zeta^j}$, and thus
\begin{equation*}
u_k(\Phi(\zeta)) = \prod_{j=1}^k (\Phi(\zeta)-\alpha_j) = p_k(\zeta) +
\sum_{j=1}^\infty \frac{\widetilde{d}_j}{\zeta^j},
\end{equation*}
where $p_k$ is a monic polynomial of degree $k$.  For $z \in \C$ and $\lambda >
1$ sufficiently large,~\eqref{eqn:def_bk} yields
\begin{equation*}
\begin{split}
b_k(z) &= \frac{1}{2 \pi i} \int_{\Gamma_\lambda}
\frac{u_k(\Phi(\zeta))}{\zeta-z} \, d\zeta
= \frac{1}{2 \pi i} \int_{\Gamma_\lambda} \frac{p_k(\zeta)}{\zeta-z} \, d\zeta
+ \frac{1}{2 \pi i} \int_{\Gamma_\lambda} \sum_{j=1}^\infty
\frac{\widetilde{d}_j}{\zeta^j} \frac{1}{\zeta-z} \, d\zeta \\
&= p_k(z).
\end{split}
\end{equation*}
The second integral vanishes by virtue of Cauchy's integral formula for domains
with infinity as interior point; see,
e.g.,~\cite[Problem~14.14]{Markushevich1965}.
\eop
\end{proof}

Note that in the case $N = 1$ this result reduces to the classical fact that
the $k$th Faber polynomial $F_k(z)$ is the polynomial part of the Laurent
series at infinity of $(\Phi(z))^k$.

We will now consider sets $E$ that are polynomial pre-images of simply
connected sets $\Omega$.  We first recall the following result
from~\cite{SeteLiesen2015} about the corresponding lemniscatic maps, where
$\Omega = \Omega^* = \{ \overline{z} : z \in \Omega \}$ means that $\Omega$ is
symmetric with respect to the real line.

\begin{theorem}[{\cite[Theorem~3.1]{SeteLiesen2015}}] \label{thm:preimage}
Let $\Omega = \Omega^* \subseteq \C$ be a simply connected compact set (not a
single point) with exterior Riemann mapping
\begin{equation*}
\widetilde{\Phi} : \widehat{\C} \backslash \Omega \to \{ w \in \widehat{\C} :
\abs{w} > 1 \}, \quad
\widetilde{\Phi}(\infty) = \infty, \quad \widetilde{\Phi}'(\infty) > 0.
\end{equation*}
Let $P(z) = \alpha z^n + \alpha_0$ with $\alpha > 0$, $n \geq 2$, and $\alpha_0 
< \min ( \Omega \cap \R )$.
Then $E \coloneq P^{-1}(\Omega)$ is the disjoint union of $n$ simply connected
compact sets, and
\begin{equation*}
\Phi : \widehat{\C} \backslash E \to \cL = \{ w \in \widehat{\C} : \abs{U(w)}
> \mu \}, \quad
\Phi(z) = z \Big( \frac{\mu^n}{z^n} [ \widetilde{\Phi}(P(z)) -
\widetilde{\Phi}(P(0)) ] \Big)^{\frac{1}{n}},
\end{equation*}
is the lemniscatic map of $E$, where we take the principal branch of the $n$th
root, and where
\begin{equation*}
\mu \coloneq \Big( \frac{1}{\alpha \widetilde{\Phi}'(\infty)}
\Big)^{\frac{1}{n}} > 0, \quad \text{and} \quad
U(w) \coloneq ( w^n + \mu^n \widetilde{\Phi}(P(0)) )^{\frac{1}{n}}.
\end{equation*}
\end{theorem}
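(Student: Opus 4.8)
The strategy is to show that the displayed $\Phi$ is a conformal bijection of $\cK=\widehat{\C}\setminus E$ onto the lemniscatic domain $\cL=\{w:|U(w)|>\mu\}$ with $\Phi(z)=z+\cO(1/z)$ near infinity, and then to invoke the uniqueness statement in Theorem~\ref{thm:conformal_map} to conclude that $\Phi$ \emph{is} the lemniscatic map of $E$ (so that, automatically, $\mu$ is the logarithmic capacity of $E$). Everything hinges on viewing $P$ as a branched covering: since $P'(z)=n\alpha z^{n-1}$, the only finite critical point of $P$ is $z=0$, with critical value $P(0)=\alpha_0$, and $\alpha_0\notin\Omega$ because $\alpha_0<\min(\Omega\cap\R)$ and $\alpha_0$ is real. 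Hence $P\colon\widehat{\C}\setminus E\to\widehat{\C}\setminus\Omega$ is a proper degree-$n$ map branched only over $\alpha_0$ and $\infty$, with $P^{-1}(\alpha_0)=\{0\}$ and $P^{-1}(\infty)=\{\infty\}$, each of full multiplicity $n$.

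First I would pin down the structure of $E$. Writing $\Omega'\coloneq\tfrac1\alpha(\Omega-\alpha_0)$, this is a full continuum (not a point) with $0\notin\Omega'$ and with $0,\infty$ lying in its single (simply connected) complementary component, so a single-valued branch of $\log$ exists on a simply connected open neighbourhood of $\Omega'$ missing $0$; then $E=\{z:z^n\in\Omega'\}$ is the union of the $n$ pairwise disjoint homeomorphic copies $e^{2\pi i k/n}W$ of $\Omega'$, where $W=\exp(\tfrac1n\log\Omega')$. Thus $E$ is the disjoint union of $n$ mutually exterior simply connected compact sets, none a single point, and $P$ maps each rotated copy of $W$ biholomorphically onto $\Omega$. (Equivalently: Riemann--Hurwitz gives $\chi(\widehat{\C}\setminus E)=n-2(n-1)=2-n$, and the two local monodromies being mutually inverse $n$-cycles shows $\widehat{\C}\setminus E$ is connected, so $E$ has $n$ components.)

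Next I would produce the candidate map. Put $c\coloneq\mu^n\widetilde{\Phi}(\alpha_0)$, so that $U(w)^n=w^n+c$, and $g(z)\coloneq\mu^n(\widetilde{\Phi}(P(z))-\widetilde{\Phi}(\alpha_0))$. On $\widehat{\C}\setminus E$ the function $g(z)/z^n$ is holomorphic and nowhere zero: the only zero of $\widetilde{\Phi}(P(z))-\widetilde{\Phi}(\alpha_0)$ there is at $z=0$ of order $n$, and the only pole is at $\infty$ of order $n$, cancelling $z^n$ in both cases. I then claim $g(z)/z^n$ has a single-valued holomorphic $n$-th root on $\widehat{\C}\setminus E$: the period of $d\log(g/z^n)$ around each component of $E$ vanishes, since $0$ lies outside that component and, on a curve enclosing it, $\widetilde{\Phi}(P(z))$ winds once around a small circle about the origin while $\widetilde{\Phi}(\alpha_0)$ is a fixed point of modulus $>1$, so $\widetilde{\Phi}(P(z))-\widetilde{\Phi}(\alpha_0)$ has winding number $0$. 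Taking the branch tending to $1$ at infinity recovers exactly the displayed $\Phi$; it is holomorphic on $\widehat{\C}\setminus E$ with $\Phi(z)^n=g(z)$, hence $U(\Phi(z))^n=\Phi(z)^n+c=\mu^n\widetilde{\Phi}(P(z))$, and so $|U(\Phi(z))|=\mu|\widetilde{\Phi}(P(z))|>\mu$: thus $\Phi$ maps $\widehat{\C}\setminus E$ into $\cL$.

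The remaining and most delicate point is bijectivity onto $\cL$ with the correct normalisation, and this is where I expect the main work. I would use the symmetry $z\mapsto e^{2\pi i/n}z$, which preserves $\widehat{\C}\setminus E$ and fixes $g(z)/z^n$; the ratio of the chosen $n$-th root at $e^{2\pi i/n}z$ and at $z$ is a continuous, hence constant, $n$-th root of unity, equal to $1$ by comparison at infinity, so $\Phi(e^{2\pi i/n}z)=e^{2\pi i/n}\Phi(z)$. For $w_0\in\cL$ the $n$ points of $P^{-1}(\widetilde{\Phi}^{-1}((w_0^n+c)/\mu^n))$ form an orbit of this rotation, and their $\Phi$-images are the $n$ distinct $n$-th roots of $w_0^n$, exactly one of which is $w_0$; this yields injectivity and surjectivity simultaneously. (Alternatively, $\Psi\circ\Phi=\widetilde{\Phi}\circ P$ with $\Psi(w)=(w^n+c)/\mu^n$ exhibits $\Phi$ as a morphism between two degree-$n$ branched covers of $\{|w|>1\}$ with identical branch data, hence a degree-$1$ covering, i.e.\ a biholomorphism.) Finally, $\widetilde{\Phi}(P(z))=\alpha\widetilde{\Phi}'(\infty)z^n+(\text{const})+\cO(z^{-n})$ together with $\mu^n=(\alpha\widetilde{\Phi}'(\infty))^{-1}$ gives $\Phi(z)^n=z^n+(\text{const})+\cO(z^{-n})$, so $\Phi(z)=z+\cO(z^{-(n-1)})=z+\cO(1/z)$ since $n\ge 2$. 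With the hypotheses of Theorem~\ref{thm:conformal_map} verified and $\cL$ in the required form (its foci being the $n$ distinct roots $a_j$ of $w^n+c=0$, each with exponent $m_j=1/n$, so $\sum_j m_j=1$), uniqueness identifies $\Phi$ as the lemniscatic map of $E$.
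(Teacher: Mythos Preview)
The paper does not contain a proof of this theorem: it is quoted verbatim from \cite[Theorem~3.1]{SeteLiesen2015} and used as a black box. Consequently there is no ``paper's own proof'' to compare against, and your write-up stands on its own.

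That said, your argument is essentially correct and follows the natural line one would expect in the cited source: exhibit the candidate $\Phi$, show it is a well-defined conformal bijection $\widehat{\C}\setminus E\to\cL$ with the right normalisation at infinity, and appeal to the uniqueness clause of Theorem~\ref{thm:conformal_map}. The key technical points---that $g(z)/z^n$ is holomorphic and zero-free on $\widehat{\C}\setminus E$ with trivial monodromy around each component $E_j$, and that the rotational symmetry $\Phi(e^{2\pi i/n}z)=e^{2\pi i/n}\Phi(z)$ upgrades ``$\Phi(z)^n=w_0^n$'' to genuine bijectivity---are handled correctly. Two small remarks:
\begin{itemize}
\item In the line ``$|U(\Phi(z))|=\mu|\widetilde{\Phi}(P(z))|>\mu$'' you are missing a $1/n$-th power: from $U(\Phi(z))^n=\mu^n\widetilde{\Phi}(P(z))$ one gets $|U(\Phi(z))|=\mu\,|\widetilde{\Phi}(P(z))|^{1/n}$. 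The conclusion $>\mu$ is unaffected.
\item Your winding-number computation (that $\widetilde{\Phi}(P(z))-\widetilde{\Phi}(\alpha_0)$ has index~$0$ around the origin along a loop about $E_j$) implicitly uses a loop close enough to $\partial E_j$ that its image under $\widetilde{\Phi}\circ P$ stays near the unit circle, hence misses the fixed point $\widetilde{\Phi}(\alpha_0)$ of modulus $>1$. This is fine since the index is a homotopy invariant, but it is worth saying.
\end{itemize}
The covering-map/Riemann--Hurwitz route you sketch in parentheses is a clean alternative for both the structure of $E$ and the bijectivity of $\Phi$, and is arguably more robust than the explicit $\log$-branch argument for showing the $n$ sheets $e^{2\pi i k/n}W$ are disjoint.
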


Note that we consider the Riemann map onto the exterior of the unit disk, so
that $\widetilde{\Phi}'(\infty)$ is positive, but is not necessarily $1$.
Therefore, the Faber polynomials $F_k$ associated with this map have the
leading coefficients $(\widetilde{\Phi}'(\infty))^k$, and will in general
not be monic.

We then obtain the following ``transplantation result'' for Faber--Walsh
polynomials, which is an analogue of a similar result for Chebyshev polynomials
shown in~\cite{FischerPeherstorfer2001,KamoBorodin1994}.
For related results on polynomial pre-images see
also~\cite{Peherstorfer2003,PeherstorferSteinbauer2001}.

\begin{theorem} \label{thm:relation_fw_f}
In the notation of Theorem~\ref{thm:preimage}, denote the $n \geq 2$ distinct
roots of the polynomial $(U(w))^n$ by $a_1, \ldots, a_n$.
Then, the $(kn)$th Faber--Walsh polynomial for $E$ and $(\alpha_j)_{j=1}^\infty
= (a_1, a_2, \ldots, a_n, a_1, a_2, \ldots, a_n, \ldots)$ satisfies
\begin{equation}
b_{kn}(z) = \frac{1}{(\alpha \widetilde{\Phi}'(\infty))^k} F_k(P(z)),
\label{eqn:relation_fw_f}
\end{equation}
for all $k \geq 0$, where $F_k$ is the $k$th Faber polynomial for $\Omega$.
\end{theorem}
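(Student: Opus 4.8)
The plan is to use the integral representation \eqref{eqn:def_bk} for $b_{kn}$ and transform it into the corresponding integral for the Faber polynomial $F_k$ of $\Omega$ via the substitution induced by $P$. First I would recall from Theorem~\ref{thm:preimage} the explicit relation between the lemniscatic map $\Phi$ of $E$ and the Riemann map $\widetilde\Phi$ of $\Omega$, namely that $U(\Phi(z))^n = \widetilde\Phi(P(z)) \cdot (\text{a factor})$; more precisely, since $U(w)^n = w^n + \mu^n\widetilde\Phi(P(0))$ and $\Phi(z)^n \mu^{-n} = \widetilde\Phi(P(z)) - \widetilde\Phi(P(0))$, one gets $U(\Phi(z))^n = \mu^n \widetilde\Phi(P(z))$. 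The key algebraic observation is then that, for the periodic sequence $(\alpha_j) = (a_1,\dots,a_n,a_1,\dots,a_n,\dots)$, the polynomial $u_{kn}(w) = \prod_{j=1}^{kn}(w-\alpha_j) = \bigl(\prod_{j=1}^n(w-a_j)\bigr)^k = (U(w)^n)^k$, because $a_1,\dots,a_n$ are exactly the roots of the degree-$n$ polynomial $U(w)^n = w^n + \mu^n\widetilde\Phi(P(0))$ and that polynomial is monic. Hence $u_{kn}(\Phi(z)) = (U(\Phi(z))^n)^k = \mu^{nk}\,\widetilde\Phi(P(z))^k$.

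With this identity in hand, I would invoke Corollary~\ref{cor:bn_poly_part}: $b_{kn}(z)$ is the polynomial part of the Laurent series at infinity of $u_{kn}(\Phi(z)) = \mu^{nk}\,\widetilde\Phi(P(z))^k$. On the other side, the classical fact (noted right after Corollary~\ref{cor:bn_poly_part}) is that the $k$th Faber polynomial $F_k$ for $\Omega$ is the polynomial part of the Laurent series at infinity of $\widetilde\Phi(\zeta)^k$ — here using the normalization where $\widetilde\Phi$ maps onto the exterior of the unit disk, so $F_k$ has leading coefficient $(\widetilde\Phi'(\infty))^k$ rather than being monic, exactly as flagged in the remark before the theorem. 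Composing with the polynomial $P(z) = \alpha z^n + \alpha_0$: if $\widetilde\Phi(\zeta)^k = q_k(\zeta) + O(1/\zeta)$ with $q_k = $ polynomial part, then substituting $\zeta = P(z)$ and letting $z\to\infty$ (so $P(z)\to\infty$), the $O(1/P(z))$ tail becomes $O(1/z^n)$, which contributes nothing to the polynomial part; thus the polynomial part of $\widetilde\Phi(P(z))^k$ is $q_k(P(z)) = F_k(P(z))/(\widetilde\Phi'(\infty))^k$ — wait, one must be careful: $q_k(P(z))$ is the polynomial part of $\widetilde\Phi(P(z))^k$ only after checking $\widetilde\Phi(P(z))^k - q_k(P(z)) = O(1/z)$, which holds since it equals the tail evaluated at $P(z)$.

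Putting the two computations together: $b_{kn}(z) = \mu^{nk} \cdot (\text{polynomial part of } \widetilde\Phi(P(z))^k) = \mu^{nk}\, q_k(P(z))$, while $F_k(P(z)) = (\widetilde\Phi'(\infty))^k\, q_k(P(z))$. Hence $b_{kn}(z) = \mu^{nk} (\widetilde\Phi'(\infty))^{-k} F_k(P(z))$. Finally I would substitute the value $\mu^n = 1/(\alpha\widetilde\Phi'(\infty))$ from Theorem~\ref{thm:preimage}, giving $\mu^{nk}(\widetilde\Phi'(\infty))^{-k} = (\alpha\widetilde\Phi'(\infty))^{-k}(\widetilde\Phi'(\infty))^{-k}$ — let me redo this: $\mu^{nk} = (\alpha\widetilde\Phi'(\infty))^{-k}$, so $b_{kn}(z) = (\alpha\widetilde\Phi'(\infty))^{-k}(\widetilde\Phi'(\infty))^{-k} F_k(P(z))$, which does not match \eqref{eqn:relation_fw_f}; this discrepancy tells me I have mislabeled $q_k$, and in fact $q_k(P(z)) = F_k(P(z))$ directly is the correct normalization to chase, i.e. the polynomial part of $\widetilde\Phi^k$ must be identified with $F_k/(\widetilde\Phi'(\infty))^k$ or with something else depending on the precise convention in the cited references. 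The main obstacle I expect is precisely this bookkeeping of normalization constants: one must pin down exactly which normalization of the Faber polynomials is meant (monic vs. leading coefficient $(\widetilde\Phi'(\infty))^k$), track the factor $\mu^n = 1/(\alpha\widetilde\Phi'(\infty))$ correctly, and verify that $u_{kn}(\Phi(z))$ equals $\mu^{nk}\widetilde\Phi(P(z))^k$ rather than differing by a power of $\mu$ hidden in the $n$th-root branch choice in Theorem~\ref{thm:preimage}. Once the constants are reconciled the identity \eqref{eqn:relation_fw_f} follows, and the degree count $\deg b_{kn} = kn = \deg F_k(P(\cdot))$ together with matching leading coefficients serves as a sanity check.
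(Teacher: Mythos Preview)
Your approach is correct and essentially identical to the paper's proof: establish $u_{kn}(\Phi(z)) = (U(\Phi(z)))^{nk} = \mu^{nk}\,\widetilde{\Phi}(P(z))^k$ and then take polynomial parts of the Laurent series at infinity on both sides via Corollary~\ref{cor:bn_poly_part}. Your normalization worry resolves exactly as you guessed near the end: in the convention used here (flagged in the remark just before the theorem), $F_k$ \emph{is} the polynomial part of $\widetilde{\Phi}^k$, so $q_k = F_k$ with leading coefficient $(\widetilde{\Phi}'(\infty))^k$, and hence $b_{kn}(z) = \mu^{nk} F_k(P(z)) = (\alpha\widetilde{\Phi}'(\infty))^{-k} F_k(P(z))$ directly.
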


\begin{proof}
Theorem~\ref{thm:preimage} implies that
\begin{equation*}
\prod_{j=1}^n (\Phi(z)-a_j) = (U(\Phi(z)))^n = \mu^n \widetilde{\Phi}(P(z))
= \frac{1}{\alpha \widetilde{\Phi}'(\infty)} \widetilde{\Phi}(P(z)).
\end{equation*}
Then, for $k \geq 0$, we find
\begin{equation*}
u_{kn}(\Phi(z)) = \prod_{j=1}^n (\Phi(z)-a_j)^k
= \frac{1}{(\alpha \widetilde{\Phi}'(\infty))^k} ( \widetilde{\Phi}(P(z)) )^k.
\end{equation*}
Considering on both sides the polynomial part of the Laurent series at
infinity, we find~\eqref{eqn:relation_fw_f};
see Corollary~\ref{cor:bn_poly_part} for the Faber--Walsh polynomials,
and, for instance,~\cite[p.~33]{Suetin1998} for the Faber polynomials.
\eop
\end{proof}

In Theorem~\ref{thm:relation_fw_f} other choices of the sequence
$(\alpha_j)_{j=1}^\infty$ are possible:  If $(\alpha_j)_{j=1}^\infty$ satisfies
$u_{kn}(w) = \prod_{j=1}^n (w-a_j)^k$ for some $k$,
then~\eqref{eqn:relation_fw_f} holds for this $k$.

For a polynomial of degree $1$ the situation is different from the one in
Theorem~\ref{thm:relation_fw_f}, since the polynomial then is a linear
transformation and thus preserves the number of components of a set.
In this case we obtain the following stronger result.

\begin{proposition}
\label{prop:bn_under_moebius}
Let the notation be as in Theorem~\ref{thm:fw_poly}, and let $P(z) = \alpha z
+ \beta$ with $\alpha, \beta \in \C$ and $\alpha \neq 0$.  Then the 
Faber--Walsh polynomials $\widetilde{b}_k$ for $P(E)$ and 
$(P(\alpha_j))_{j=1}^\infty$ satisfy
$b_k(z) = \frac{1}{\alpha^k} \widetilde{b}_k(P(z))$ for all $k \geq 0$.
\end{proposition}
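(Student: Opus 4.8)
The plan is to proceed exactly in the spirit of the proof of Theorem~\ref{thm:relation_fw_f}, via Corollary~\ref{cor:bn_poly_part}. First I would determine how the lemniscatic map transforms under $P(z) = \alpha z + \beta$. Since $P$ is an invertible affine map, $P(E)$ is again a disjoint union of $N$ mutually exterior simply connected compact sets, and one checks that the lemniscatic map of $P(E)$ is $\widetilde{\Phi}(w) = \alpha \Phi(P^{-1}(w)) + \beta$: this is conformal and bijective from $\widehat{\C} \setminus P(E)$ onto $\alpha \cL + \beta$, and since $\Phi(z) = z + \cO(1/z)$ near infinity we get $\widetilde{\Phi}(w) = w + \cO(1/w)$ near infinity, so by the uniqueness statement in Theorem~\ref{thm:conformal_map} it is \emph{the} lemniscatic map. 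In particular the foci of the image lemniscatic domain are $P(a_1), \ldots, P(a_N)$, which is why the natural admissible sequence for $P(E)$ is $(P(\alpha_j))_{j=1}^\infty$; with that choice the counting numbers $N_{k,j}$ are unchanged, so the associated polynomials satisfy $\widetilde{u}_k(w) = \prod_{j=1}^N (w - P(a_j))^{N_{k,j}} = \alpha^k \prod_{j=1}^N (P^{-1}(w) - a_j)^{N_{k,j}} = \alpha^k u_k(P^{-1}(w))$.

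Next I would combine these two identities: for $w = P(z)$,
\begin{equation*}
\widetilde{u}_k(\widetilde{\Phi}(P(z))) = \alpha^k \, u_k\big( P^{-1}(\widetilde{\Phi}(P(z))) \big) = \alpha^k \, u_k\big( \Phi(z) \big),
\end{equation*}
using $P^{-1}(\widetilde{\Phi}(P(z))) = P^{-1}(\alpha \Phi(z) + \beta) = \Phi(z)$. Now take the polynomial part of the Laurent series at infinity on both sides. By Corollary~\ref{cor:bn_poly_part}, the polynomial part of $u_k(\Phi(z))$ (as a function of $z$) is $b_k(z)$, and the polynomial part of $\widetilde{u}_k(\widetilde{\Phi}(\cdot))$ is $\widetilde{b}_k$. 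Composition with the affine map $z \mapsto P(z)$ commutes with taking the polynomial part of the Laurent expansion at infinity (an affine change of variable sends the decaying part to the decaying part and a polynomial to a polynomial of the same degree), so the polynomial part of $z \mapsto \widetilde{u}_k(\widetilde{\Phi}(P(z)))$ is $\widetilde{b}_k(P(z))$. Hence $\widetilde{b}_k(P(z)) = \alpha^k b_k(z)$, which rearranges to the claimed $b_k(z) = \alpha^{-k} \widetilde{b}_k(P(z))$.

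The only genuine point requiring care — the ``main obstacle,'' though a mild one — is the justification that taking the polynomial part of a Laurent series at infinity is compatible with the affine substitution $z \mapsto P(z)$, i.e. that $\big[\,g(P(z))\,\big]_{\mathrm{poly}} = \big[\,g\,\big]_{\mathrm{poly}}(P(z))$ for $g$ analytic near infinity. This follows because $g(w) = q(w) + h(w)$ with $q$ a polynomial and $h(w) \to 0$ as $w \to \infty$ and holomorphic there; then $g(P(z)) = q(P(z)) + h(P(z))$ with $q \circ P$ a polynomial of the same degree and $h(P(z)) \to 0$ as $z \to \infty$, so uniqueness of the decomposition gives the claim. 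Alternatively one can argue directly from the contour-integral formula~\eqref{eqn:def_bk}, substituting $\zeta = P(\xi)$ and using $\widetilde{\Phi}(P(\xi)) = \alpha\Phi(\xi)+\beta$, $P(\widetilde{\Gamma}_\sigma\text{-parametrization})$, to transform the integral for $\widetilde{b}_k(P(z))$ into $\alpha^k$ times the integral for $b_k(z)$; either route is routine. Everything else is bookkeeping with the explicit formulas, as in the proof of Theorem~\ref{thm:relation_fw_f}.
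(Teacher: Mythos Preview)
Your proof is correct and follows essentially the same approach as the paper: identify the lemniscatic map of $P(E)$ as $\widetilde{\Phi}=P\circ\Phi\circ P^{-1}$, deduce $\widetilde{u}_k(\widetilde{\Phi}(P(z)))=\alpha^k u_k(\Phi(z))$, and apply Corollary~\ref{cor:bn_poly_part}. The only differences are cosmetic: the paper cites \cite[Lemma~2.3]{SeteLiesen2015} for the transformation of the lemniscatic map whereas you verify it directly from the normalization and uniqueness in Theorem~\ref{thm:conformal_map}, and you make explicit the (implicit in the paper) fact that taking the polynomial part at infinity commutes with an affine change of variable.
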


\begin{proof}
Let $\Phi : \widehat{\C} \backslash E \to \cL$ be the lemniscatic map of $E$.
Then $P(\cL)$ is a lemniscatic domain and $\widetilde{\Phi} \coloneq P \circ
\Phi \circ P^{-1}$ is the lemniscatic map of $P(E)$;
see~\cite[Lemma~2.3]{SeteLiesen2015}.  In particular, the polynomials $u_k(w) =
\prod_{j=1}^k (w-\alpha_j)$ for $\cL$ and $\widetilde{u}_k(\widetilde{w}) =
\prod_{j=1}^k ( \widetilde{w} - P(\alpha_j))$ for $P(\cL)$ satisfy
\begin{equation*}
\widetilde{u}_k( \widetilde{\Phi}(P(z)) )
= \widetilde{u}_k( P(\Phi(z)) )
= \prod_{j=1}^k ( P(\Phi(z)) - P(\alpha_j))
= \alpha^k u_k(\Phi(z)).
\end{equation*}
Corollary~\ref{cor:bn_poly_part} implies that $\widetilde{b}_k(P(z)) = \alpha^k
b_k(z)$.
\eop
\end{proof}

We now show that Faber--Walsh polynomials are asymptotically optimal in
the sense of the following definition introduced by Eiermann, Niethammer and
Varga~\cite{EiermannNiethammer1983,EiermannNiethammerVarga1985} in the context
of semi-iterative methods for solving linear algebraic systems.

\begin{definition}
For a compact set $E \subseteq \C$ and $z_0 \in \C$ the number
\begin{equation*}
R_{z_0}(E) \coloneq \limsup_{k \to \infty} \Big( \min_{ p \in \cP_k(z_0) }
\norm{p}_E \Big)^{1/k}
\end{equation*}
is called the \emph{asymptotic convergence factor} for polynomials from
$\cP_k(z_0) \coloneq \{ 1 + \sum_{j=1}^k a_j (z-z_0)^j : a_1, \ldots, a_k \in
\C \}$ on $E$.
A sequence of polynomials $p_k \in \cP_k(z_0)$, $k = 0, 1, \ldots$, is called
\emph{asymptotically optimal} on $E$ and with respect to $z_0$, if
\begin{equation*}
\lim_{k \to \infty} \norm{p_k}_E^{1/k} = R_{z_0}(E).
\end{equation*}
\end{definition}

For any compact set $E$ and $z_0 \in \C$ we have $R_{z_0}(E) \leq 1$, and
$R_{z_0}(E) = 1$ if $z_0 \in E$.  More precisely, one can show that
$R_{z_0}(E) < 1$ if and only if $z_0$ is in the unbounded component of
$\widehat{\C} \backslash E$.

Let $E$ be a compact set as in Theorem~\ref{thm:conformal_map} and let $g_\cK$ 
be the Green's function with pole at infinity for $\cK = \widehat{\C} 
\backslash E$, which is connected.
Then the Bernstein--Walsh Lemma (see, e.g.,~\cite[Theorem~5.5.7 (a)]{Ran95}
or~\cite[Section~4.6]{Walsh1969}) says that any polynomial $p$ of 
degree $k\geq 1$ satisfies
\begin{equation*}
\left(\frac{|p(z_0)|}{\|p\|_E}\right)^{1/k} \leq \exp(g_\cK(z_0))
\quad\mbox{for all $z_0\in\C\setminus E$.}
\end{equation*}
Together with~\cite[Theorem~5.5.7 (b)]{Ran95} this yields
\begin{equation}
R_{z_0}(E) = \exp(-g_\cK(z_0)) = \frac{\mu}{\abs{U(\Phi(z_0))}}
\label{eqn:ACF_green}
\end{equation}
(the second equality follows from~\eqref{eqn:greens_fct}) and the lower bound
\begin{equation}
\norm{p}_E \geq R_{z_0}(E)^k \quad\mbox{for any } p \in \cP_k(z_0).
\label{eqn:BernsteinWalsh}
\end{equation}
Both~\eqref{eqn:ACF_green} and~\eqref{eqn:BernsteinWalsh} have been shown for 
$z_0 = 1 \notin E$ in~\cite{EiermannLiVarga1989}, and the proof given there can 
be easily extended to all $z_0 \in \C \backslash E$.

Note that~\eqref{eqn:ACF_green} is a formula for the asymptotic convergence
factor for any complex set $E$ as in Theorem~\ref{thm:conformal_map} and any
$z_0 \in \C \backslash E$. (For all other $z_0$ we have $R_{z_0}(E)=1$ as
mentioned above.) Using~\eqref{eqn:ACF_green} together with the numerical
method from~\cite{NLS2015_numconf} for computing lemniscatic maps gives
a numerical method for computing the asymptotic convergence factor for a
large class of compact sets and arbitrary constraint points. This method is
used in our numerical examples in Sections~\ref{sect:intervals}
and~\ref{sect:examples} below.

The inequality~\eqref{eqn:BernsteinWalsh} belongs to the family of
Bernstein--Walsh type inequalities. They are widely used in the literature in particular
in the context of iterative methods for solving linear algebraic systems; see,
e.g.,~\cite{DriscollTohTrefethen1998,EiermannLiVarga1989,Schiefermayr2011_minres} 
and the references cited therein. For $E$ consisting of a finite number of real 
intervals an improved lower bound for $\norm{p}_E$, $p \in \cP_k(0)$, has 
been derived in~\cite{Schiefermayr2011_minres}.

\begin{proposition} \label{prop:FW_asympt_opt}
In the notation of Theorem~\ref{thm:fw_poly}, let
$z_0 \in \C \backslash E$ and let $\sigma_0 > 1$ be such that
$z_0 \in \Gamma_{\sigma_0}$.  Then
\begin{equation*}
R_{z_0}(E) = \frac{1}{\sigma_0} \quad \text{ and } \quad
R_{z_0}(\overline{\interior(\Gamma_\sigma)}) = \frac{\sigma}{\sigma_0} \quad
\text{for } 1 < \sigma \leq \sigma_0,
\end{equation*}
and the Faber--Walsh polynomials for $E$ satisfy
\begin{align}
\lim_{k \to \infty} \Big( \frac{\norm{b_k}_E}{\abs{b_k(z_0)}} \Big)^{1/k}
&= \frac{1}{\sigma_0} = R_{z_0}(E), \label{eqn:limit_nfw_on_E} \\
\lim_{k \to \infty} \Big( \frac{\norm{b_k}_{\Gamma_\sigma}}{\abs{b_k(z_0)}}
\Big)^{1/k}
&= \frac{\sigma}{\sigma_0} \quad \text{for any } \sigma > 1.
\label{eqn:limit_nfw_on_Gamma_sigma}
\end{align}
Hence the normalized Faber--Walsh polynomials $b_k(z)/b_k(z_0) \in \cP_k(z_0)$
are a\-symptotically optimal on $E$, and on 
$\overline{\interior(\Gamma_\sigma)}$ whenever $1 < \sigma \leq \sigma_0$.
\end{proposition}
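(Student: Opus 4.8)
The plan is to read off the two asymptotic convergence factors from the Green's function formula~\eqref{eqn:ACF_green}, then to determine the exact exponential growth rate of the Faber--Walsh polynomials on the level curves $\Gamma_\sigma$ and at the point $z_0$ by combining the two-sided estimates~\eqref{eqn:double_bound_bn} and~\eqref{eqn:double_bound_un}, and finally to assemble the limits~\eqref{eqn:limit_nfw_on_E} and~\eqref{eqn:limit_nfw_on_Gamma_sigma} together with the lower bound~\eqref{eqn:BernsteinWalsh}.

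First, the convergence factors. Since $z_0 \in \Gamma_{\sigma_0}$ we have $g_\cK(z_0) = \log\sigma_0$, so~\eqref{eqn:ACF_green} immediately gives $R_{z_0}(E) = \exp(-g_\cK(z_0)) = 1/\sigma_0$. For $1 < \sigma \le \sigma_0$ I would apply the same Bernstein--Walsh reasoning to the compact set $\overline{\interior(\Gamma_\sigma)}$: its complement $\exterior(\Gamma_\sigma)$ in $\widehat{\C}$ is connected (it is a superlevel set $\{ z \in \cK : g_\cK(z) > \log\sigma \}$ of $g_\cK$ together with $\infty$, and by the maximum principle it has no bounded component), and the Green's function of $\exterior(\Gamma_\sigma)$ with pole at infinity is $g_\cK(\cdot) - \log\sigma$, since this function is harmonic and positive there, vanishes on $\Gamma_\sigma$, and inherits the logarithmic singularity of $g_\cK$ at infinity. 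Hence $R_{z_0}(\overline{\interior(\Gamma_\sigma)}) = \exp(\log\sigma - g_\cK(z_0)) = \sigma/\sigma_0$; for $\sigma = \sigma_0$ both sides equal $1$ since then $z_0 \in \Gamma_{\sigma_0} \subseteq \overline{\interior(\Gamma_{\sigma_0})}$.

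Next I would pin down the growth of $b_k$. On $\Gamma_\sigma$ one has $\abs{U(\Phi(z))} = \sigma\mu$ by~\eqref{eqn:greens_fct}, and $\Lambda_\sigma$ is a compact subset of $\C$ not containing any of the foci $a_1,\dots,a_N$ (indeed $\abs{U(a_j)} = 0$), so~\eqref{eqn:double_bound_un} with $S = \Lambda_\sigma$ gives $A_1 (\sigma\mu)^k \le \abs{u_k(\Phi(z))} \le A_2 (\sigma\mu)^k$ for $z \in \Gamma_\sigma$. Combining this with~\eqref{eqn:double_bound_bn} yields $C_1 A_1 (\sigma\mu)^k \le \abs{b_k(z)} \le C_2 A_2 (\sigma\mu)^k$ on $\Gamma_\sigma$, the lower inequality for all sufficiently large $k$. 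Taking the maximum over $z \in \Gamma_\sigma$ and then $k$th roots gives $\norm{b_k}_{\Gamma_\sigma}^{1/k} \to \sigma\mu$ for every $\sigma > 1$, and specialising to the single point $z_0 \in \Gamma_{\sigma_0}$ gives $\abs{b_k(z_0)}^{1/k} \to \sigma_0\mu$ (in particular $b_k(z_0) \neq 0$ for large $k$, so $b_k/b_k(z_0) \in \cP_k(z_0)$). Dividing the two limits proves~\eqref{eqn:limit_nfw_on_Gamma_sigma}. For~\eqref{eqn:limit_nfw_on_E} I would use that $E \subseteq \interior(\Gamma_\sigma)$ and the maximum principle for the polynomial $b_k$ to get $\norm{b_k}_E \le \norm{b_k}_{\Gamma_\sigma}$, so that $\limsup_k (\norm{b_k}_E / \abs{b_k(z_0)})^{1/k} \le \sigma/\sigma_0$ for every $\sigma > 1$, hence $\le 1/\sigma_0$; the matching lower bound is~\eqref{eqn:BernsteinWalsh} applied to $b_k/b_k(z_0) \in \cP_k(z_0)$, namely $\norm{b_k}_E/\abs{b_k(z_0)} \ge R_{z_0}(E)^k = \sigma_0^{-k}$. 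The asymptotic optimality claims are then immediate: $b_k/b_k(z_0) \in \cP_k(z_0)$ and~\eqref{eqn:limit_nfw_on_E} says $\norm{b_k/b_k(z_0)}_E^{1/k} \to 1/\sigma_0 = R_{z_0}(E)$, while for $1 < \sigma \le \sigma_0$ the maximum principle gives $\norm{b_k/b_k(z_0)}_{\overline{\interior(\Gamma_\sigma)}} = \norm{b_k/b_k(z_0)}_{\Gamma_\sigma}$, whose $k$th root tends to $\sigma/\sigma_0 = R_{z_0}(\overline{\interior(\Gamma_\sigma)})$ by~\eqref{eqn:limit_nfw_on_Gamma_sigma}.

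I expect the only genuinely delicate point to be the second convergence-factor identity — that is, verifying that $\overline{\interior(\Gamma_\sigma)}$ is a compact set with connected complement to which the formula~\eqref{eqn:ACF_green} legitimately applies, and correctly identifying its Green's function with pole at infinity as $g_\cK - \log\sigma$. Everything else is routine bookkeeping with the estimates~\eqref{eqn:double_bound_bn} and~\eqref{eqn:double_bound_un} together with the fact that multiplicative constants disappear under the $k$th-root limit.
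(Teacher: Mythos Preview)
Your proof is correct and follows essentially the same route as the paper: identify the convergence factors via the Green's function, use the two-sided bounds~\eqref{eqn:double_bound_bn} and~\eqref{eqn:double_bound_un} to get $\norm{b_k}_{\Gamma_\sigma}^{1/k} \to \sigma\mu$ and $\abs{b_k(z_0)}^{1/k} \to \sigma_0\mu$, and then assemble. The one small difference is the lower bound for $\norm{b_k}_E$: the paper uses that $b_k$ is monic of degree $k$ and $\mu$ is the logarithmic capacity of $E$, so $\norm{b_k}_E \geq \mu^k$ and hence $\norm{b_k}_E^{1/k} \to \mu$ directly, whereas you appeal to the Bernstein--Walsh lower bound~\eqref{eqn:BernsteinWalsh} for the normalized polynomial; both arguments are equally short and valid.
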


\begin{proof}
The Green's function with pole at infinity for $\cK = \widehat{\C} \backslash
E$ is $g_\cK$ as in~\eqref{eqn:greens_fct}.  By the definition of $\sigma_0$,
we have $g_\cK(z_0) = \log(\sigma_0)$, so that $R_{z_0}(E) =
\frac{1}{\sigma_0}$ by~\eqref{eqn:ACF_green}.
The Green's function with pole at infinity for $\exterior(\Gamma_\sigma)$ is
$g_\cK(z) - \log(\sigma)$.
Hence, for $1 < \sigma \leq \sigma_0$,
$R_{z_0}(\overline{\interior(\Gamma_\sigma)}) = \tfrac{\sigma}{\sigma_0}$
by~\eqref{eqn:ACF_green}.

Let $\sigma > 1$.  By~\eqref{eqn:double_bound_bn} there exists constants $C_1,
C_2 > 0$ such that for sufficiently large $k$ we have
\begin{equation*}
C_1 \abs{ u_k(\Phi(z)) } < \abs{ b_k(z) } < C_2 \abs{u_k(\Phi(z))}, \quad z \in
\Gamma_\sigma.
\end{equation*}
Apply Lemma~\ref{lem:alpha_n} to bound $\abs{u_k(\Phi(z))}$: There exist
$A_1, A_2 > 0$ such that~\eqref{eqn:double_bound_un} holds for $w \in
\Phi(\Gamma_\sigma) = \Lambda_\sigma = \{ w : \abs{U(w)} = \sigma \mu \}$.  We
thus have
\begin{equation}
C_1 A_1 (\sigma \mu)^k < \abs{ b_k(z) } < C_2 A_2 (\sigma \mu)^k, \quad z \in
\Gamma_\sigma. \label{eqn:estimate_of_b_kz}
\end{equation}
We then have $b_k(z) \neq 0$ for $z \in \Gamma_\sigma$ and, in particular,
$b_k(z_0) \neq 0$ for sufficiently large $k$.
Now~\eqref{eqn:estimate_of_b_kz} implies $\lim_{k \to \infty} \abs{ b_k(z)
}^{1/k} = \sigma \mu$ for any $z \in \Gamma_\sigma$, and, in
particular,
\begin{equation}
\lim_{k \to \infty} \abs{ b_k(z_0) }^{1/k} = \sigma_0 \mu.
\label{eqn:limit_b_kz0^1/n}
\end{equation}
Moreover $\lim_{k \to \infty} \norm{b_k}_{ \Gamma_\sigma }^{1/k} =
\sigma \mu$, since~\eqref{eqn:estimate_of_b_kz} holds uniformly for $z \in
\Gamma_\sigma$.  This
establishes~\eqref{eqn:limit_nfw_on_Gamma_sigma}.

It remains to show~\eqref{eqn:limit_nfw_on_E}.  We have $\norm{b_k}_E \geq
\mu^k$, since the Faber--Walsh polynomials $b_k$ are monic of degree $k$ and 
$\mu$ is the capacity of $E$; see~\cite[Theorem~5.5.4]{Ran95}.
Hence
\begin{equation*}
\mu^k \leq \norm{b_k}_E \leq
\norm{b_k}_{\Gamma_\sigma} \leq C_2 A_2 (\sigma \mu)^k,
\end{equation*}
where $\sigma > 1$ is arbitrary; see~\eqref{eqn:estimate_of_b_kz}.  This shows
that
\begin{equation*}
\mu \leq \liminf_{k \to \infty} \norm{b_k}_E^{1/k}
\leq \limsup_{k \to \infty} \norm{b_k}_E^{1/k}
\leq \sigma \mu,
\end{equation*}
and $\lim_{k \to \infty} \norm{b_k}_E^{1/k} = \mu$, since
$\sigma > 1$ was arbitrary.  Together with~\eqref{eqn:limit_b_kz0^1/n} we
obtain~\eqref{eqn:limit_nfw_on_E}.
\eop
\end{proof}

%% End of section 
 
\section{Faber--Walsh polynomials on two real intervals}
\label{sect:intervals}

In this section we consider Faber--Walsh polynomials on sets consisting of two 
real intervals.

Polynomial approximation problems on such sets have been studied in numerous 
publications, dating back (at least) to the classical works of 
Achieser~\cite{Akhiezer1932_I,Akhiezer1933_II},
who derived analytic formulae for the Chebyshev polynomials and the 
Green's function in terms of Jacobi's elliptic and theta functions.
For a modern treatment of this area with many references up to 1996 we refer to 
Fischer's book~\cite{Fischer1996}.  It also contains an analytic formula for 
the asymptotic convergence factor for two intervals and real $z_0$ in terms of 
Jacobi's elliptic and theta functions (through its characterization with the 
Green's function), as well as a MATLAB code for its numerical 
computation~\cite[p.~130]{Fischer1996}.
More recently, Peherstorfer and Schiefermayr studied Chebyshev polynomials on 
several real intervals in~\cite{PeherstorferSchiefermayr1999}, and Schiefermayr 
derived bounds for the asymptotic convergence factor for two intervals in terms 
of elementary functions~\cite{Schiefermayr2011}.
Related approximation problems have been studied 
in~\cite{Hasson2007,Schiefermayr2008_cheb,Schiefermayr2011_minres}.

In this section we show how the Faber--Walsh polynomials fit into this widely 
studied area.  We first consider the case of two intervals of the same length.  
Here the lemniscatic map is known explicitly, so that the Faber--Walsh 
polynomials can be explicitly computed and related to the classical Faber and 
Chebyshev polynomials.
We then consider the general case of two arbitrary intervals, where we compute 
the lemniscatic map and the Faber--Walsh polynomials numerically.

\subsection{Two intervals of the same length}
\label{subsec:sym_ints}

We consider the sets consisting of two real intervals of the same length which 
are symmetric with respect to the origin, i.e.,
\begin{equation} \label{eqn:sym_ints}
E = [-D,-C] \cup [C,D] \quad \text{with } 0 < C < D.
\end{equation}
The lemniscatic map of such a set is known analytically 
from~\cite{SeteLiesen2015}.

\begin{proposition}\label{prop:Phi_sym_ints}
Let $E$ be as in~\eqref{eqn:sym_ints}.  Then
\begin{equation*}
w = \Phi(z) = z \left( \frac{1}{2} + \frac{DC}{2} \frac{1}{z^2}
\pm \frac{1}{2 z^2} \sqrt{ (z^2-C^2)(z^2-D^2) } \right)^{1/2}
\end{equation*}
is the lemniscatic map of $E$, and the corresponding lemniscatic domain is
\begin{equation}
\cL = \left\{ w \in \widehat{\C} : \abslr{w - \tfrac{D+C}{2}}^{1/2} 
\abslr{w + \tfrac{D+C}{2}}^{1/2} > \tfrac{ \sqrt{D^2 - C^2} }{2} \right\}.
\label{eqn:lem_sym_ints}
\end{equation}
Moreover, the inverse of $\Phi$ is given by
\begin{equation*}
z = \psi(w) = w \sqrt{ 1 + \left( \frac{D-C}{2} \right)^2 \frac{1}{w^2 - 
(\frac{D+C}{2})^2} },
\end{equation*}
where we take the principal branch of the square root.
Its Laurent series at infinity is
\begin{equation}
\psi(w) = w + \sum_{k=0}^\infty \frac{ c_{2k+1} }{ w^{2k+1} },
\label{eqn:psi_Lseries}
\end{equation}
where the coefficients are given by $c_{2k} = 0$, $k \geq 0$, and
\begin{equation}
c_{2k+1} = \frac{1}{2} \left( \frac{D-C}{2} \right)^2 \left( \frac{D+C}{2} 
\right)^{2k} - \frac{1}{2} \sum_{j=1}^{k}
c_{2j-1} c_{2(k-j)+1}, \quad k \geq 0. \label{eqn:psi_Lseries_coeff}
\end{equation}
In particular, $c_1 = \frac{1}{2} \left( \frac{D-C}{2} \right)^2$.
\end{proposition}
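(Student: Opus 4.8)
The plan is to obtain the formulas for $\Phi$ and $\cL$ as the special case $P(z)=z^2$ of Theorem~\ref{thm:preimage}, to invert $\Phi$ explicitly, and to read off the Laurent series of $\psi$ from a squaring identity. To set things up, write $E=P^{-1}(\Omega)$ with $P(z)=z^2$ and $\Omega=[C^2,D^2]$: since $z^2\in[C^2,D^2]$ forces $z$ real with $C\le|z|\le D$, we have $P^{-1}([C^2,D^2])=[-D,-C]\cup[C,D]=E$, and the hypotheses of Theorem~\ref{thm:preimage} hold because $\Omega=\Omega^*$ is a nondegenerate interval and $\alpha_0=0<C^2=\min(\Omega\cap\R)$. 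The exterior Riemann map of $[C^2,D^2]$ onto $\{|w|>1\}$ normalized by $\widetilde\Phi(\infty)=\infty$, $\widetilde\Phi'(\infty)>0$, is the classical transplanted inverse Joukowski map, with inverse $\widetilde\Phi^{-1}(w)=\frac{C^2+D^2}{2}+\frac{D^2-C^2}{4}\big(w+\frac1w\big)$; reading off the leading coefficient gives $\widetilde\Phi'(\infty)=\frac{4}{D^2-C^2}$, and solving $\widetilde\Phi^{-1}(w)=0$ for the root of modulus $>1$ gives $\widetilde\Phi(P(0))=\widetilde\Phi(0)=-\frac{C+D}{D-C}$. (Alternatively, one may simply quote the lemniscatic map from~\cite{SeteLiesen2015}.)

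Substituting these two numbers into Theorem~\ref{thm:preimage} yields $\mu=\big(\widetilde\Phi'(\infty)\big)^{-1/2}=\frac{\sqrt{D^2-C^2}}{2}$ and $U(w)=\big(w^2+\mu^2\widetilde\Phi(P(0))\big)^{1/2}=\big(w^2-(\tfrac{D+C}{2})^2\big)^{1/2}$, so that $\cL=\{\,|U(w)|>\mu\,\}$ is exactly~\eqref{eqn:lem_sym_ints}; simplifying $\widetilde\Phi(z^2)-\widetilde\Phi(0)$ with the explicit $\widetilde\Phi$ and inserting it into the formula for $\Phi$ of Theorem~\ref{thm:preimage} gives the stated $\Phi(z)$. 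For $\psi=\Phi^{-1}$, I would use the identity $U(\Phi(z))^2=\mu^2\widetilde\Phi(P(z))$ from Theorem~\ref{thm:preimage}: with $w=\Phi(z)$ this reads $\widetilde\Phi(z^2)=\mu^{-2}\big(w^2-(\tfrac{D+C}{2})^2\big)$, and applying the explicit $\widetilde\Phi^{-1}$ and simplifying collapses the right-hand side to $\psi(w)^2=w^2\big(1+(\tfrac{D-C}{2})^2/(w^2-(\tfrac{D+C}{2})^2)\big)$; since $\psi$ is single-valued on $\cL$ (using $b^2-a^2=CD$ with $a=\tfrac{D-C}{2}$, $b=\tfrac{D+C}{2}$, each zero--pole pair $\pm\sqrt{CD},\pm b$ of the radicand lies in one bounded component of $\widehat\C\setminus\cL$, because $\sqrt{CD}<\tfrac{C+D}{2}$) and $\psi(w)\sim w$ at infinity, taking the principal square root gives the asserted $\psi$. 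One can equally well verify $\psi(\Phi(z))=z$ directly by squaring $\Phi(z)$, clearing the radical, and solving the resulting equation, which turns out to be linear in $z^2$.

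For the Laurent series~\eqref{eqn:psi_Lseries}: the radicand in $\psi$ is even in $w$, so the principal branch makes $\psi$ an odd function, and therefore its expansion at infinity contains only odd powers, i.e.\ $\psi(w)=w+\sum_{k\ge0}c_{2k+1}w^{-2k-1}$ with $c_{2k}=0$. To derive~\eqref{eqn:psi_Lseries_coeff}, I would square this series and compare it coefficient by coefficient with $\psi(w)^2=w^2+\big(\tfrac{D-C}{2}\big)^2\sum_{j\ge0}\big(\tfrac{D+C}{2}\big)^{2j}w^{-2j}$, the latter being a geometric series valid for $|w|>\tfrac{D+C}{2}$. The constant terms give $2c_1=\big(\tfrac{D-C}{2}\big)^2$, i.e.\ $c_1=\tfrac12\big(\tfrac{D-C}{2}\big)^2$; the coefficient of $w^{-2k}$ for $k\ge1$ gives $2c_{2k+1}+\sum_{j=1}^{k}c_{2j-1}c_{2(k-j)+1}=\big(\tfrac{D-C}{2}\big)^2\big(\tfrac{D+C}{2}\big)^{2k}$, which is~\eqref{eqn:psi_Lseries_coeff} after solving for $c_{2k+1}$.

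The main obstacle is not conceptual but a matter of bookkeeping: keeping track of which principal branches of the various square roots are compatible with the normalizations $\Phi(z)=z+\cO(1/z)$ and $\psi(w)=w+\cO(1/w)$, verifying the single-valuedness of $\psi$ on $\cL$, and carrying through the elementary but somewhat long algebraic simplifications of $\widetilde\Phi(z^2)$ and of $\widetilde\Phi^{-1}$ evaluated at $\mu^{-2}(w^2-(\tfrac{D+C}{2})^2)$.
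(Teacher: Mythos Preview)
Your proposal is correct and follows essentially the same route as the paper. For $\Phi$, $\cL$, and $\psi$, the paper simply cites \cite[Corollary~3.3]{SeteLiesen2015}, whereas you rederive these from Theorem~\ref{thm:preimage} (which is \cite[Theorem~3.1]{SeteLiesen2015}) applied to $P(z)=z^2$, $\Omega=[C^2,D^2]$; this is an honest reconstruction of that corollary, and your computations of $\widetilde\Phi'(\infty)$, $\widetilde\Phi(0)$, $\mu$, $U$, and $\psi(w)^2=\widetilde\Phi^{-1}\big(\mu^{-2}(w^2-(\tfrac{D+C}{2})^2)\big)$ all check out. For the Laurent coefficients, the paper expands the square-root factor as $\sqrt{1+\cdots}=\sum_{k\ge0}d_k w^{-2k}$, squares, and then identifies $c_{2k+1}=d_{k+1}$, while you expand $\psi$ itself and square; the two computations coincide under the reindexing $d_{k+1}\leftrightarrow c_{2k+1}$, so there is no substantive difference.
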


\begin{proof}
The construction of $\Phi$ and $\psi$ is given 
in~\cite[Corollary~3.3]{SeteLiesen2015}. It thus remains to show the series 
expansion~\eqref{eqn:psi_Lseries}--\eqref{eqn:psi_Lseries_coeff}.

The function $w \mapsto \sqrt{1 + \left( \frac{D-C}{2} \right)^2 \tfrac{1}{w^2 
- (\frac{D+C}{2})^2} }$ is analytic and even in $\cL$, and thus has a uniformly 
convergent Laurent series at infinity of the form
\begin{equation}
\sqrt{1 + \left( \frac{D-C}{2} \right)^2 \frac{1}{w^2 - (\frac{D+C}{2})^2} } = 
\sum_{k=0}^\infty \frac{d_k}{w^{2k}}. \label{eqn:Ansatz_L_series_psi}
\end{equation}
Setting $w = \infty$ shows $d_0  = 1$.  
Squaring~\eqref{eqn:Ansatz_L_series_psi} and expanding the left-hand side into 
a Laurent series yields
\begin{equation*}
1 + \sum_{k=1}^\infty \left( \frac{D-C}{2} \right)^2 \left( \frac{D+C}{2} 
\right)^{2(k-1)} \frac{1}{w^{2k}}
= \left( \sum_{k=0}^\infty \frac{d_k}{w^{2k}} \right)^2
= \sum_{k=0}^\infty \sum_{j=0}^k d_j d_{k-j} \frac{1}{w^{2k}}.
\end{equation*}
For $k \geq 1$ we see that
\begin{equation*}
\left( \frac{D-C}{2} \right)^2 \left( \frac{D+C}{2} \right)^{2(k-1)}
= \sum_{j=0}^k d_j d_{k-j} = 2 d_0 d_k + \sum_{j=1}^{k-1} d_j d_{k-j},
\end{equation*}
and thus
\begin{equation*}
d_k = \frac{1}{2} \left( \frac{D-C}{2} \right)^2 \left( \frac{D+C}{2} 
\right)^{2(k-1)} - \frac{1}{2} \sum_{j=1}^{k-1} d_j d_{k-j}, \quad k \geq 1.
\end{equation*}
Since $\psi(w) = w + \sum_{k=0}^\infty \frac{d_{k+1}}{w^{2k+1}}$, we find that 
$c_{2k} = 0$ and
\begin{equation*}
\begin{split}
c_{2k+1} &= d_{k+1}
= \frac{1}{2} \left( \frac{D-C}{2} \right)^2 \left( \frac{D+C}{2} \right)^{2k} 
- \frac{1}{2} \sum_{j=1}^k d_j d_{k+1-j} \\
&= \frac{1}{2} \left( \frac{D-C}{2} \right)^2 \left( \frac{D+C}{2} \right)^{2k} 
- \frac{1}{2} \sum_{j=1}^k c_{2j-1} c_{2(k-j)+1}
\end{split}
\end{equation*}
for $k \geq 0$.
\eop
\end{proof}

The definition of the lemniscatic domain in~\eqref{eqn:lem_sym_ints} shows the 
well-known fact that the logarithmic capacity of $E$ is given by $\frac{1}{2} 
\sqrt{D^2-C^2}$; cf. e.g.~\cite[p.~288]{Achieser1956} or \cite{Hasson2003}.

Using the series 
expansion~\eqref{eqn:psi_Lseries}--\eqref{eqn:psi_Lseries_coeff} and the 
recurrence stated in Proposition~\ref{prop:recursion_bn}, we can compute the 
Faber--Walsh polynomials for $E$ and the sequence $(\frac{D+C}{2}, 
-\frac{D+C}{2}, \frac{D+C}{2}$, $-\frac{D+C}{2}, \ldots)$, were
$\frac{D+C}{2}$ and $-\frac{D+C}{2}$ are the two foci of the lemniscatic 
domain $\cL$ in~\eqref{eqn:lem_sym_ints}.
In Figure~\ref{fig:bn_sym_ints} we plot the polynomials $b_k$ for the set
$E = [-1, -\frac{1}{4}] \cup [\frac{1}{4}, 1]$ and $k = 4, 6$ (left), $k = 5,
7$ (right).  We observe that the polynomials of even degrees $k = 4, 6$ 
have $k+2$ extremal points on $E$.  This suggests that they are the Chebyshev 
polynomials for $E$, i.e., that $b_{2k}(z) = T_{2k}(z; E)$, where, for all $j 
\geq 0$,
\begin{equation*}
T_j(z; E) \coloneq \argmin \{ \norm{p}_E : p \text{ monic and } \deg(p) = j \}
\end{equation*}
is the (uniquely determined) $j$th Chebyshev polynomial for the compact set $E$.
We prove the following more general result, which for $n = 2$ gives the result 
for two intervals.

\begin{figure}
\centerline{
\includegraphics[width=0.5\textwidth]{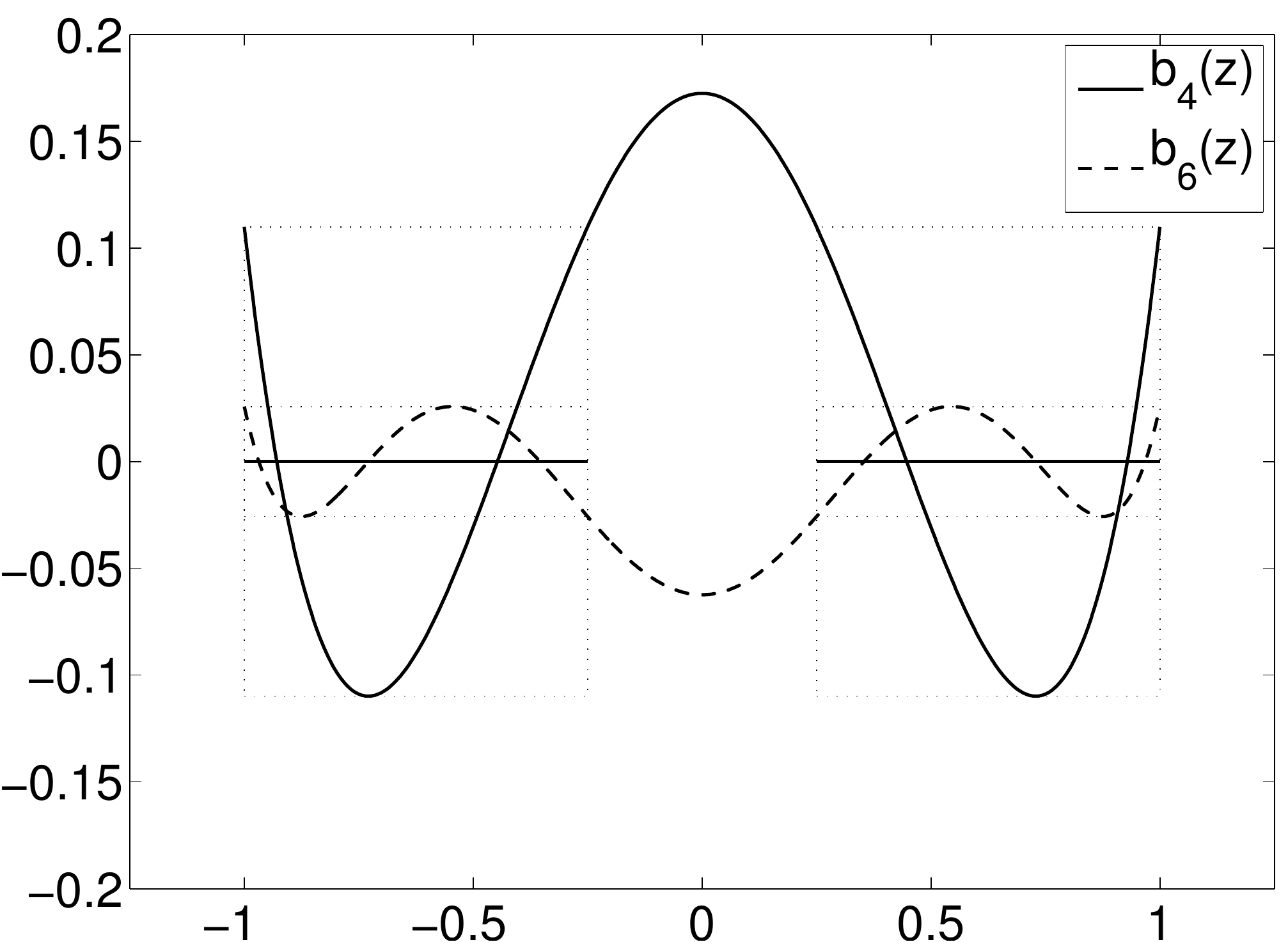}
\includegraphics[width=0.5\textwidth]{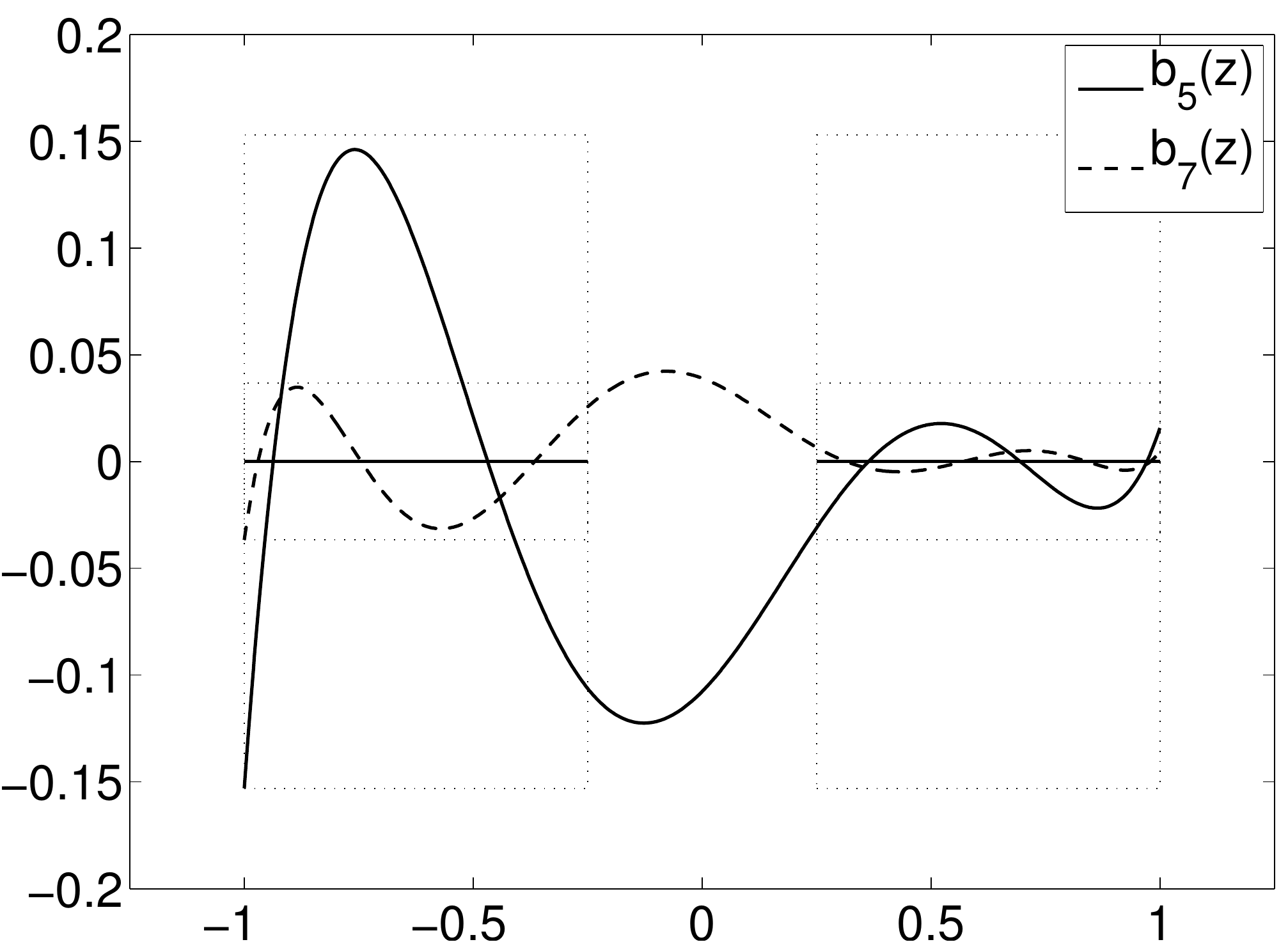}}
\caption{Faber--Walsh polynomials $b_k$ for $E = [-1, -\frac{1}{4}] \cup 
[\frac{1}{4}, 1]$.}
\label{fig:bn_sym_ints}
\end{figure}

\begin{theorem} \label{thm:bn_chebyshev}
Let $E = \cup_{j=1}^n e^{i 2 \pi j/n} [C, D]$ with $0 < C < D$.  Then the 
Faber--Walsh polynomials $b_{nk}$ for $E$ and $(\alpha_j)_{j=1}^\infty = \left( 
e^{i 2 \pi (j-1)/n } \left( \frac{D^{n/2}+C^{n/2}}{2} \right)^{n/2} 
\right)_{j=1}^\infty$ are the Chebyshev polynomials for $E$, i.e.,
\begin{equation*}
b_{nk}(z) = T_{nk}(z; E), \quad k \geq 0.
\end{equation*}
\end{theorem}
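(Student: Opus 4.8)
The plan is to realise $E$ as a polynomial pre-image of a real interval and then chain together the transplantation result of Theorem~\ref{thm:relation_fw_f}, the classical identity that on a real interval the monic Faber polynomial is the monic Chebyshev polynomial, and a symmetry argument for Chebyshev polynomials. First I would observe that $E = P^{-1}(\Omega)$ for $P(z) = z^n$ and $\Omega = [C^n, D^n]$: writing $z = e^{i2\pi j/n}t$ with $t \in [C,D]$ gives $z^n = t^n \in \Omega$, and conversely every $n$-th root of a point of $\Omega$ lies on one of the $n$ rays $e^{i2\pi j/n}[C,D]$. Since $P(z) = 1\cdot z^n + 0$ with $0 < C^n = \min(\Omega\cap\R)$ and $\Omega = \Omega^*$, Theorem~\ref{thm:preimage} applies. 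I would then evaluate the exterior Riemann map $\widetilde\Phi$ of the interval $\Omega$ explicitly (a Joukowski-type map; the one delicate point is choosing the branch of the square root so that $\abs{\widetilde\Phi(0)} > 1$), obtaining $\widetilde\Phi'(\infty) = 4/(D^n - C^n)$, $\mu = \bigl((D^n-C^n)/4\bigr)^{1/n}$, and an explicit $U$ whose $n$ distinct zeros are exactly the numbers $\alpha_1,\dots,\alpha_n$ appearing in the statement. The periodic sequence $(\alpha_j)_{j=1}^\infty$ then cycles through these $n$ zeros, which is precisely the sequence occurring in Theorem~\ref{thm:relation_fw_f} (and admissible by Lemma~\ref{lem:alpha_n}). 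Hence Theorem~\ref{thm:relation_fw_f} gives $b_{nk}(z) = (\widetilde\Phi'(\infty))^{-k}\,F_k(z^n)$ for all $k \ge 0$, with $F_k$ the $k$-th Faber polynomial of $\Omega$ for the map $\widetilde\Phi$.

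Next, since $F_k$ has leading coefficient $(\widetilde\Phi'(\infty))^k$ (as noted after Theorem~\ref{thm:preimage}), the polynomial $(\widetilde\Phi'(\infty))^{-k}F_k$ is monic; and for a real interval the monic Faber polynomial coincides with the monic Chebyshev polynomial (classical; see, e.g., \cite{Suetin1998}, together with affine invariance). Therefore $b_{nk}(z) = T_k(z^n;\Omega)$ for all $k \ge 0$, and it remains only to identify $T_k(z^n;\Omega)$ with $T_{nk}(z;E)$.

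For this last step I would use the rotational symmetry $\omega E = E$ with $\omega := e^{i2\pi/n}$, together with uniqueness of Chebyshev polynomials (valid since $E$ is infinite). Setting $p := T_{nk}(\cdot\,;E)$, the polynomial $z\mapsto \omega^{-nk}p(\omega z)$ is again monic of degree $nk$ with the same maximum norm on $E$, so by uniqueness $p(\omega z) = \omega^{nk}p(z) = p(z)$; hence $p$ is a polynomial in $z^n$, say $p(z) = g(z^n)$ with $g$ monic of degree $k$. Since $\{z^n : z \in E\} = \Omega$ we get $\norm{p}_E = \norm{g}_\Omega$, and since $q(z) := T_k(z^n;\Omega)$ is a monic degree-$nk$ competitor for $E$ with $\norm{q}_E = \norm{T_k(\cdot\,;\Omega)}_\Omega$, minimality of $p$ forces $\norm{g}_\Omega \le \norm{T_k(\cdot\,;\Omega)}_\Omega$; as the reverse inequality is automatic, uniqueness yields $g = T_k(\cdot\,;\Omega)$. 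Combining, $b_{nk}(z) = T_k(z^n;\Omega) = T_{nk}(z;E)$, which is the claim.

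The genuinely routine part should be the explicit evaluation of $\widetilde\Phi$, $\mu$ and $U$ in the first step (the only trap being the branch of the square root defining $\widetilde\Phi(0)$, and checking that the resulting zeros of $U$ are the $\alpha_j$ of the statement). The conceptual crux is the last step: it is the analogue, for the rotationally symmetric complex ``star'' $E$, of the transplantation theorems for Chebyshev polynomials on unions of real intervals in \cite{KamoBorodin1994,FischerPeherstorfer2001}, and it is precisely here that one uses that $E$ is the \emph{full} pre-image $P^{-1}(\Omega)$, so that $\{z^n : z\in E\}$ fills out all of $\Omega$ and $E$ inherits the $n$-fold rotational symmetry.
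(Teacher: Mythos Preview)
Your proof is correct and follows the same overall strategy as the paper's: realise $E$ as a polynomial pre-image of a real interval, apply Theorem~\ref{thm:relation_fw_f} to turn $b_{nk}$ into a Faber polynomial of the interval composed with $P$, use that monic Faber and monic Chebyshev polynomials coincide on an interval, and finally identify the composition with $T_{nk}(\cdot;E)$. The paper makes the cosmetically different choice $P(z)=\frac{2}{D^n-C^n}z^n-\frac{D^n+C^n}{D^n-C^n}$ mapping onto $[-1,1]$ rather than your $P(z)=z^n$ onto $[C^n,D^n]$, but this is immaterial.

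The one substantive difference is the last step. The paper obtains $T_{nk}(z;E)=\bigl(\tfrac{D^n-C^n}{2}\bigr)^k T_k(P(z);[-1,1])$ by quoting \cite[Corollary~2.2]{FischerPeherstorfer2001}, whereas you prove the corresponding identity $T_{nk}(z;E)=T_k(z^n;\Omega)$ directly from the $n$-fold rotational symmetry $\omega E=E$ and uniqueness of Chebyshev polynomials. Your argument is self-contained, and it makes explicit exactly where one uses that $E$ is the \emph{full} pre-image $P^{-1}(\Omega)$ (so that $\{z^n:z\in E\}=\Omega$ and $E$ carries the full rotational symmetry). The paper's route is shorter on the page but outsources this step; yours would allow the result to stand independently of \cite{FischerPeherstorfer2001}.
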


\begin{proof}
The result is trivial for $k = 0$, so we may consider $k \geq 1$.
The idea of the proof is to consider $E$ as a polynomial pre-image of $[-1,1]$ 
and to relate both the Faber--Walsh polynomial and the Chebyshev polynomial for 
$E$ to the Chebyshev polynomials of the first kind.

The polynomial
\begin{equation*}
P(z) = \frac{2}{D^n-C^n} z^n - \frac{D^n+C^n}{D^n-C^n}
= \frac{2 z^n - C^n-D^n}{D^n-C^n},
\end{equation*}
satisfies $E = P^{-1}([-1,1])$, and the exterior Riemann map $\widetilde{\Phi}$ 
for $[-1, 1]$ satisfies $\widetilde{\Phi}'(\infty) = 2$.  Therefore, 
Theorem~\ref{thm:relation_fw_f} shows that
\begin{equation*}
b_{nk}(z) = \left( \frac{D^n-C^n}{4} \right)^k F_k(P(z)), \quad k \geq 0,
\end{equation*}
where $F_k$ is the $k$th Faber polynomial for $[-1,1]$.
On the other hand, we have
\begin{equation*}
T_{nk}(z; E) = \left( \frac{D^n-C^n}{2} \right)^k T_k( P(z); [-1,1])
\end{equation*}
from~\cite[Corollary~2.2]{FischerPeherstorfer2001}.
For $k \geq 1$, the $k$th Chebyshev polynomial for $[-1,1]$ is
\begin{equation*}
T_k(z; [-1,1]) = \frac{1}{2^{k-1}} T_k(z),
\end{equation*}
where $T_k(z)$ is the $k$th Chebyshev polynomial of the first kind;
see e.g.~\cite[Theorem~3.2.2]{Fischer1996}.
Moreover, the $k$th Faber polynomial for $[-1,1]$ is given by $F_k(z) = 2 
T_k(z)$ for $k \geq 1$; see~\cite[p.~37]{Suetin1998}.  Thus
\begin{equation*}
T_{nk}(z; E) = \left( \frac{D^2-C^2}{4} \right)^k 2 T_k(P(z))
= \left( \frac{D^2-C^2}{4} \right)^k F_k(P(z)) = b_{nk}(z),
\end{equation*}
which completes the proof.
\eop
\end{proof}

This theorem generalizes the classical relation of Faber and Chebyshev 
polynomials on the interval $[-1, 1]$; see~\cite[p.~37]{Suetin1998}.
For $n = 2$ the statement of the theorem also holds for any two real intervals 
of equal length, which can be seen as follows.  Let $E = [A, B] \cup [C, D]$ 
with $B-A = D-C > 0$, and let $P(z) = z - \frac{B+C}{2}$.  Then $\widetilde{E} 
= P(E)$ consists of two intervals of equal length which are symmetric with 
respect to the origin.  If we denote the Faber--Walsh polynomials for 
$\widetilde{E}$ by $\widetilde{b}_n$, we find
\begin{equation*}
b_{2k}(z) = \widetilde{b}_{2k}(P(z)) = T_{2k}(P(z); \widetilde{E}) = T_{2k}(z; 
E),
\end{equation*}
where we used Proposition~\ref{prop:bn_under_moebius}, 
Theorem~\ref{thm:bn_chebyshev} 
and~\cite[Corollary~2.2]{FischerPeherstorfer2001}.

In Proposition~\ref{prop:FW_asympt_opt} we have shown that the normalized 
Faber--Walsh polynomials are asymptotically optimal.  For sets $E$ of the 
form~\eqref{eqn:sym_ints} this result can be strengthened as follows.

\begin{corollary} \label{cor:bn_optimal}
Let $E = [-D, -C] \cup [C, D]$ with $0 < C < D$ and $z_0 \in \R \backslash E$.  
Then the normalized Faber--Walsh polynomials for $E$ and 
$(\alpha_j)_{j=1}^\infty = (\frac{D+C}{2}, -\frac{D+C}{2}, \frac{D+C}{2}, 
-\frac{D+C}{2}, \ldots)$ of even degree are optimal in the sense that
\begin{equation*}
\frac{\norm{b_{2k}}_E}{\abs{b_{2k}(z_0)}} = \min_{ p \in \cP_{2k}(z_0) } 
\norm{p}_E, \quad k \geq 0.
\end{equation*}
\end{corollary}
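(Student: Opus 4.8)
The plan is to combine Theorem~\ref{thm:bn_chebyshev} (for $n=2$, which gives $b_{2k}(z) = T_{2k}(z;E)$ for the set $E$ in~\eqref{eqn:sym_ints}) with a known extremal property of Chebyshev polynomials on sets that are ``$T$-sets'' or, more precisely, on sets where the Chebyshev polynomial of a given degree equioscillates enough times. The key observation is that $b_{2k}(z) = T_{2k}(z;E)$ is the monic minimizer of $\norm{\cdot}_E$ over monic polynomials of degree $2k$, and that for the normalization at an exterior real point $z_0$ one needs a slightly different statement: that among all $p \in \cP_{2k}(z_0)$, the minimizer of $\norm{p}_E$ is attained by $b_{2k}/b_{2k}(z_0)$. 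Since $\cP_{2k}(z_0)$ normalizes the value at $z_0$ rather than the leading coefficient, this is a genuinely different optimization, and the link between the two is the classical fact that for real $E$ and real $z_0$ outside $E$, the problem $\min_{p\in\cP_{2k}(z_0)}\norm{p}_E$ is solved by the polynomial that equioscillates with the maximal number of alternations on $E$.

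First I would invoke Theorem~\ref{thm:bn_chebyshev} with $n = 2$ to write $b_{2k}(z) = T_{2k}(z;E)$, and express $T_{2k}(z;E) = \bigl(\tfrac{D^2-C^2}{4}\bigr)^k T_k(P(z))$ with $P(z) = \tfrac{2z^2 - C^2 - D^2}{D^2 - C^2}$ and $T_k$ the Chebyshev polynomial of the first kind, as established inside the proof of that theorem. On $E$, $P$ maps onto $[-1,1]$ surjectively, so $T_k(P(z))$ equioscillates between $\pm 1$ as $z$ ranges over $E$, achieving its extreme values at $2k+2$ points of $E$ (the $2$ endpoints of each sub-interval together with the $k-1$ interior extrema in each, since each of the $k+1$ alternation points of $T_k$ on $[-1,1]$ except $\pm 1$ has two preimages in $E$, and $\pm 1$ each has one). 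Thus $b_{2k}$ attains $\pm\norm{b_{2k}}_E$ at no fewer than $2k+2$ points of $E$, alternating in sign along the real line.

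Next I would use this equioscillation to prove optimality in $\cP_{2k}(z_0)$ by the standard alternation/exchange argument. Suppose for contradiction that some $q \in \cP_{2k}(z_0)$ has $\norm{q}_E < \norm{b_{2k}}_E / \abs{b_{2k}(z_0)}$; set $r = b_{2k}/b_{2k}(z_0) - q$. Then $r$ is a polynomial of degree $\le 2k$ with $r(z_0) = 0$, and at the $\ge 2k+2$ alternation points of $b_{2k}$ on $E$ the sign of $r$ agrees with the (alternating) sign of $b_{2k}/b_{2k}(z_0)$, so $r$ has at least $2k+1$ sign changes among those points, hence at least $2k+1$ zeros in the convex hull of $E$; together with the zero at $z_0 \notin E$ this forces $r$ to have at least $2k+2$ zeros counted with multiplicity, contradicting $\deg r \le 2k$ unless $r \equiv 0$. (Here one must be a little careful that the sign of $b_{2k}(z_0)$ does not spoil the alternation count: since $z_0$ lies in one of the two unbounded real components of $\R\setminus E$ and $b_{2k}$ is monic with all real critical values $\pm\norm{b_{2k}}_E$ on $E$, $b_{2k}$ has constant sign on each component of $\R\setminus E$, and multiplying by $\sign b_{2k}(z_0)$ preserves the alternation pattern of $b_{2k}/b_{2k}(z_0)$ on $E$.) Therefore $b_{2k}/b_{2k}(z_0)$ is the unique minimizer, which is exactly the claimed identity.

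The main obstacle is the bookkeeping in the alternation argument: one has to verify carefully that $b_{2k}$ genuinely has $2k+2$ alternation points on $E$ (not fewer), which hinges on the fact that the two preimages under $P$ of each interior extremum of $T_k$ on $[-1,1]$ are distinct and lie in $E$, and that $z_0$ being a single simple forced zero of $r$ outside $E$ is enough to push the zero count past $\deg r$. A clean alternative, avoiding the zero-counting entirely, is to cite the equioscillation characterization of the constrained Chebyshev problem $\min_{p\in\cP_{2k}(z_0)}\norm{p}_E$ directly (e.g.\ via Rivlin or via the theory in Fischer's book), for which the $2k+2$ alternation points on $E$ plus the interpolation condition at $z_0$ constitute precisely the optimality criterion; I would phrase the final write-up using whichever is shorter.
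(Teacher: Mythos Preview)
Your approach is correct and essentially self-contained, but there is one miscounting you should fix before writing it up. Because $P(z)=\tfrac{2z^2-C^2-D^2}{D^2-C^2}$ is even, $b_{2k}$ is an even function; in particular $b_{2k}(-C)=b_{2k}(C)$, so the $2k+2$ extremal points of $b_{2k}$ on $E$ do \emph{not} alternate in sign all the way along the real line: the signs agree at the inner endpoints $-C$ and $C$. Hence among the $2k+1$ consecutive pairs of extremal points you get only $2k$ sign changes, not $2k+1$. This does not break your argument: $2k$ sign changes produce $2k$ zeros of $r$ in $(-D,-C)\cup(C,D)$, and the forced zero at $z_0\in\R\backslash E$ (which lies either in $(-C,C)$ or outside $[-D,D]$, so is distinct from those) gives a $(2k+1)$st zero, still one more than $\deg r\le 2k$ allows. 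So the contradiction goes through; you just need to correct the stated count and remove the phrase ``alternating in sign along the real line''.

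For comparison, the paper's proof is the short alternative you mention at the end: after invoking Theorem~\ref{thm:bn_chebyshev} to get $b_{2k}=T_{2k}(\cdot\,;E)$, it simply cites two results from Fischer's book, splitting into the cases $z_0\in\R\setminus[-D,D]$ (where \cite[Corollary~3.3.8]{Fischer1996} applies directly) and $z_0\in\,]-C,C[$ (where one checks that $T_2(z;E)=z^2-\tfrac{D^2+C^2}{2}$ has the four extremal points $\pm C,\pm D$ and then invokes \cite[Corollary~3.3.6]{Fischer1996}). Your direct alternation argument has the advantage of being self-contained and of treating both cases uniformly; the paper's version is shorter but leans on Fischer's machinery.
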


\begin{proof}
By Theorem~\ref{thm:bn_chebyshev}, the Faber--Walsh polynomials of even degree 
are the Chebyshev polynomials for $E$.
For $z_0 \in \R \backslash [-D, D]$, Corollary~3.3.8 in~\cite{Fischer1996}
shows that the optimal polynomial is the normalized Chebyshev polynomial.
For $z_0 \in ]-C, C[$, a little more work is required.  First, it is not 
difficult to show that $T_2(z;E) = z^2 - \frac{D^2+C^2}{2}$ with 
$\norm{T_2(z;E)}_E = \frac{D^2-C^2}{2}$, and that $T_2(z;E)$ has the four 
extremal points $\pm C, \pm D$.  Therefore, the optimal polynomial is the 
normalized Chebyshev polynomial; see~\cite[Corollary~3.3.6]{Fischer1996}.
\eop
\end{proof}

We point out that the argument in the previous proof is restricted to 
real $z_0$, since the proofs in~\cite{Fischer1996} are based on the 
alternation property of the Chebyshev polynomials for subsets of the real line.

Note that the normalized Faber--Walsh polynomials $b_k(z) / b_k(z_0)$ of odd 
degrees are not optimal: 
If $z_0 \in \: ]-C, C[$, it is known that the optimal polynomial is 
``defective'', 
i.e., the optimal polynomial for degree $2k+1$ is the same as for degree 
$2k$; see~\cite[Corollary~3.3.6]{Fischer1996}.
If $z_0 \in \R \backslash [-D, D]$, the optimal polynomial is the normalized 
Chebyshev polynomial (\cite[Corollary~3.3.8]{Fischer1996}), while
in general the Faber--Walsh polynomials of odd degree are not the Chebyshev 
polynomials, since they do not have $k+1$ extremal points on 
$E$~\cite[Corollary~3.1.4]{Fischer1996}; see the example in 
Figure~\ref{fig:bn_sym_ints}.

Let us continue with a numerical study of the maximum norm of the 
normalized Faber--Walsh polynomials, where we focus on the constraint point 
$z_0 = 0$.  We compute the Faber--Walsh polynomials $b_{k,j}$, $k = 1, 2, 
\ldots$, for the sets
\begin{equation}\label{eqn:Ej_sym}
E_j\coloneq [-1,-2^{-j}] \cup [2^{-j},1],\quad j=1,2,3,4,
\end{equation}
and the sequence 
$(\frac{D+C}{2},-\frac{D+C}{2},\frac{D+C}{2},-\frac{D+C}{2},\dots)$ using the 
coefficients of the Laurent 
series~\eqref{eqn:psi_Lseries}--\eqref{eqn:psi_Lseries_coeff} and 
Proposition~\ref{prop:recursion_bn}.
Figure~\ref{fig:nfw_sym_ints_norm} (left) shows the values
$\frac{\norm{b_{k,j}}_{E_j}}{\abs{b_{k,j}(0)}}$ of the normalized Faber--Walsh 
polynomials for the sets $E_j$.
A comparison with the values $R_0(E_j)^k$ shows that the actual convergence 
speed of $\frac{\norm{b_{k,j}}_{E_j}}{\abs{b_{k,j}(0)}}$ to zero almost exactly 
matches the rate predicted by the asymptotic analysis even for small values of 
$k$.
Recall from~\eqref{eqn:BernsteinWalsh} that $R_{z_0}(E)^k$ is a lower 
bound on $\norm{p}_E$ for any $p \in \cP_k(z_0)$.
The ``zigzags'' in the curves are due to the fact that for even degrees 
$b_{k,j}(z) / b_{k,j}(0)$ is the optimal polynomial (as shown in 
Corollary~\ref{cor:bn_optimal}), while for odd degrees it is not.

\begin{figure}
\centerline{
\includegraphics[width=0.5\textwidth]{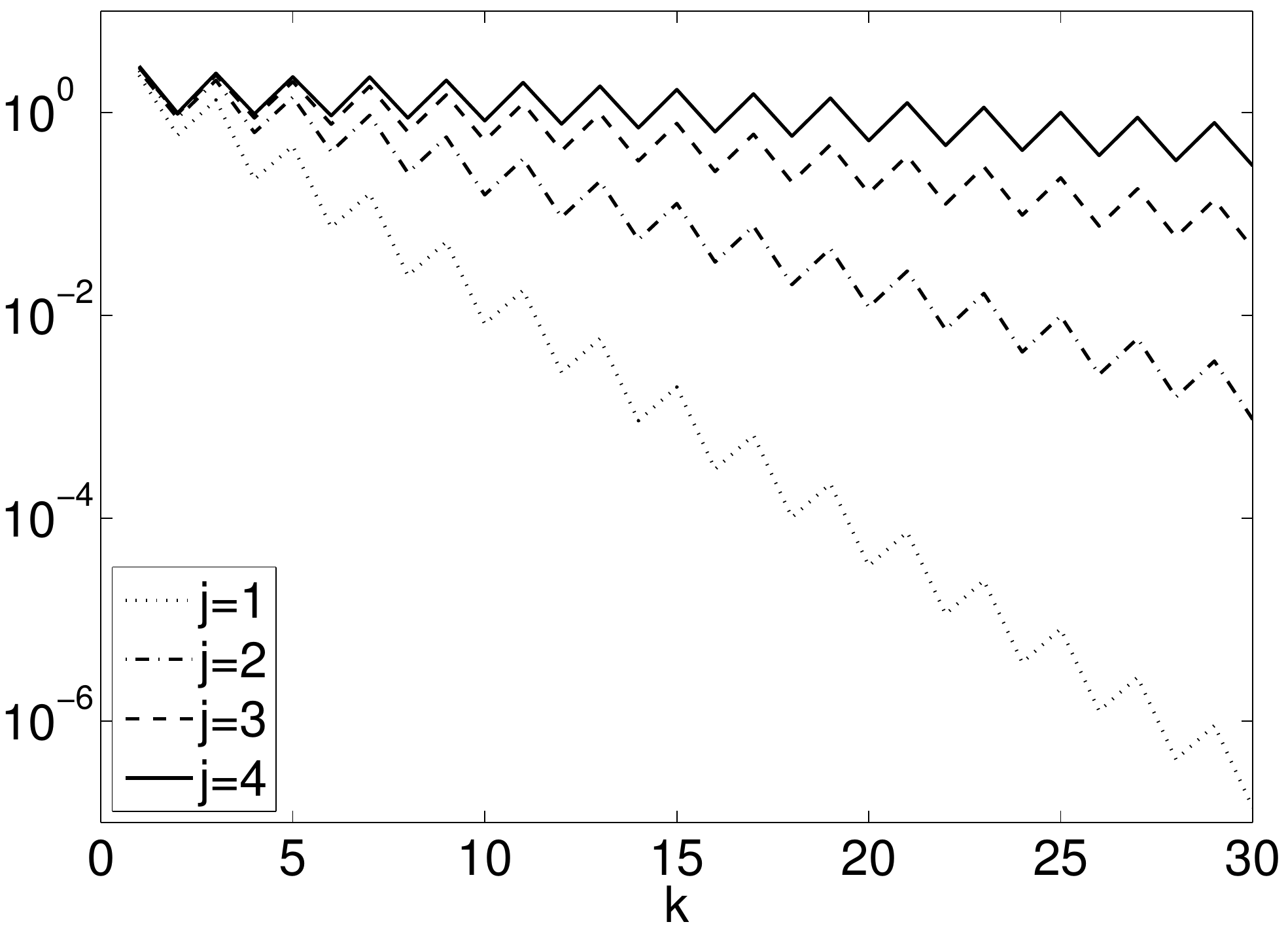}
\includegraphics[width=0.5\textwidth]{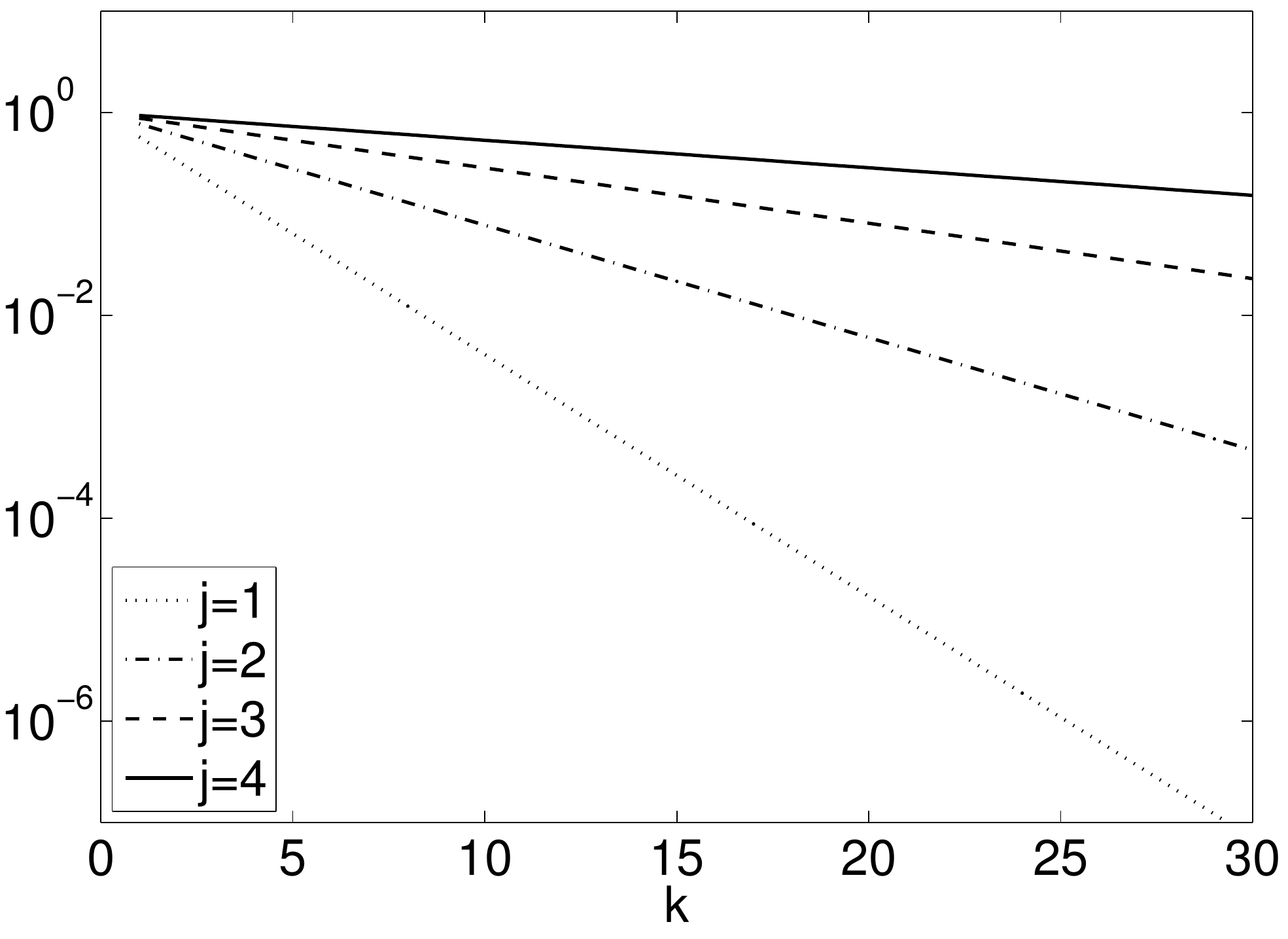}
}
\caption{The values $\frac{\|b_{k,j}\|_{E_j}}{|b_{k,j}(0)|}$ (left) and 
$R_0(E_j)^k$ (right) for the sets $E_j$ from~\eqref{eqn:Ej_sym}.}
\label{fig:nfw_sym_ints_norm}
\end{figure}

Let us discuss the asymptotic convergence factor of the set $E$ 
from~\eqref{eqn:sym_ints}.  For two arbitrary real intervals and a real 
constraint point $z_0$, the asymptotic convergence factor can be expressed in 
terms of Jacobi's elliptic and theta functions~\cite{Akhiezer1932_I}; see 
also~\cite{Fischer1996}.
Estimates of the asymptotic convergence factor in terms of elementary functions 
have been derived in~\cite{Schiefermayr2011}.
For the case of two intervals as in~\eqref{eqn:sym_ints} and for an 
arbitrary \emph{real or complex} constraint point $z_0 \in \C \backslash E$, we 
have from~\eqref{eqn:ACF_green} and Proposition~\ref{prop:Phi_sym_ints}
\begin{equation*}
R_{z_0}(E) = \frac{\mu}{\abs{U(\Phi(z_0))}}
= \frac{1}{\sqrt{ \frac{2}{D^2-C^2} \abslr{ z_0^2 - \frac{D^2+C^2}{2} \pm 
\sqrt{(z_0^2-C^2)(z_0^2-D^2)} } }},
\end{equation*}
where the sign of the square root is chosen to maximize the absolute value of 
the denominator.  We thus obtain $R_{z_0}(E)$ in terms of elementary functions 
and for all complex $z_0 \notin E$.
For the special case $z_0 = 0$ we have $\Phi(0) = 0$ and hence
\begin{equation*}
R_0(E) = \sqrt{ \frac{ D-C }{ D+C } }.
\end{equation*}
In Figure~\ref{fig:ACF_sym_ints_3d} we plot the asymptotic convergence factor 
$R_{z_0}( [-1, -\frac{1}{2}] \cup [\frac{1}{2}, 1] )$ as a function of $z_0 
\in \C$.

\begin{figure}
\centerline{
\includegraphics[width=0.5\textwidth]{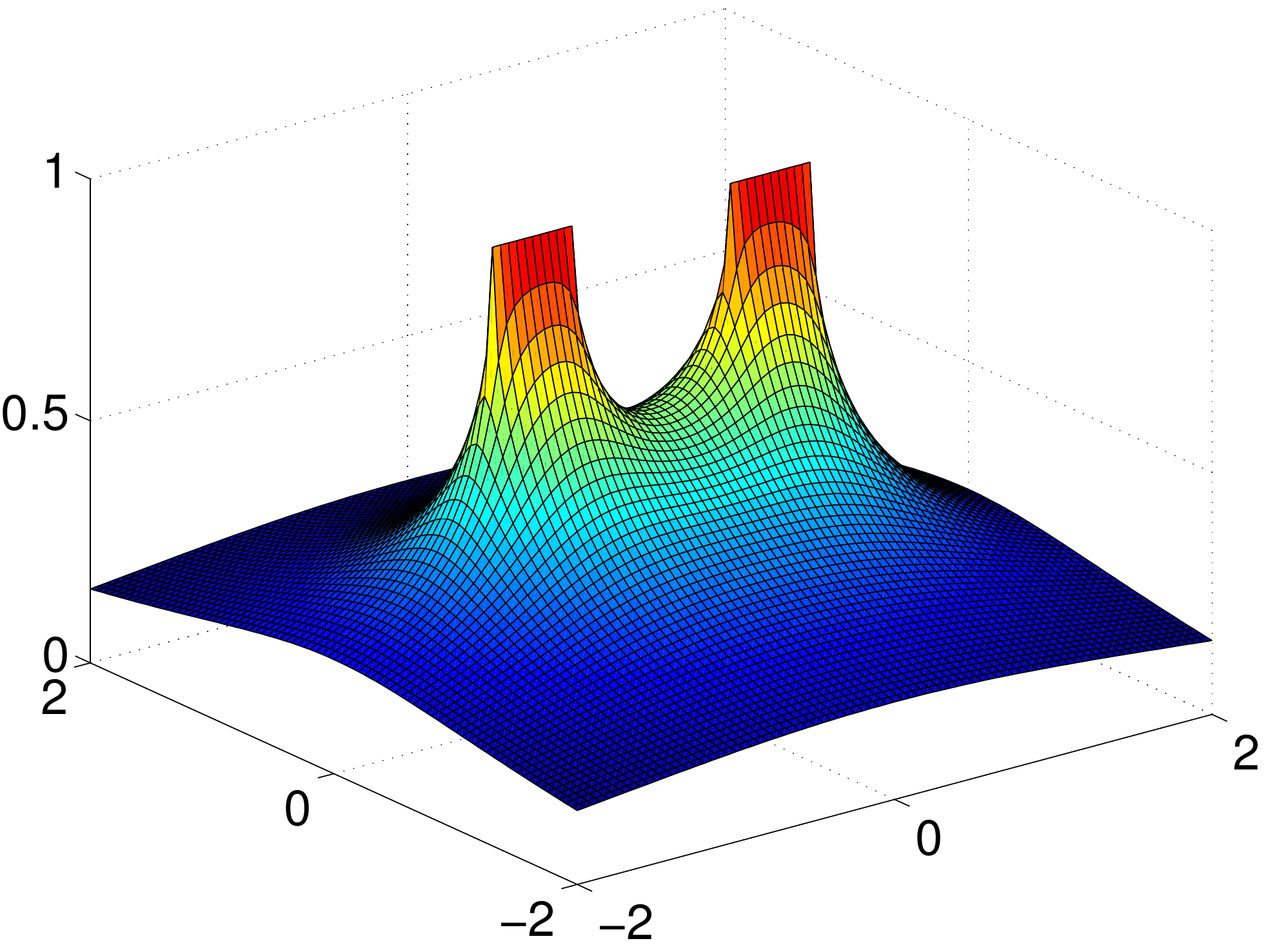}
\includegraphics[width=0.5\textwidth]{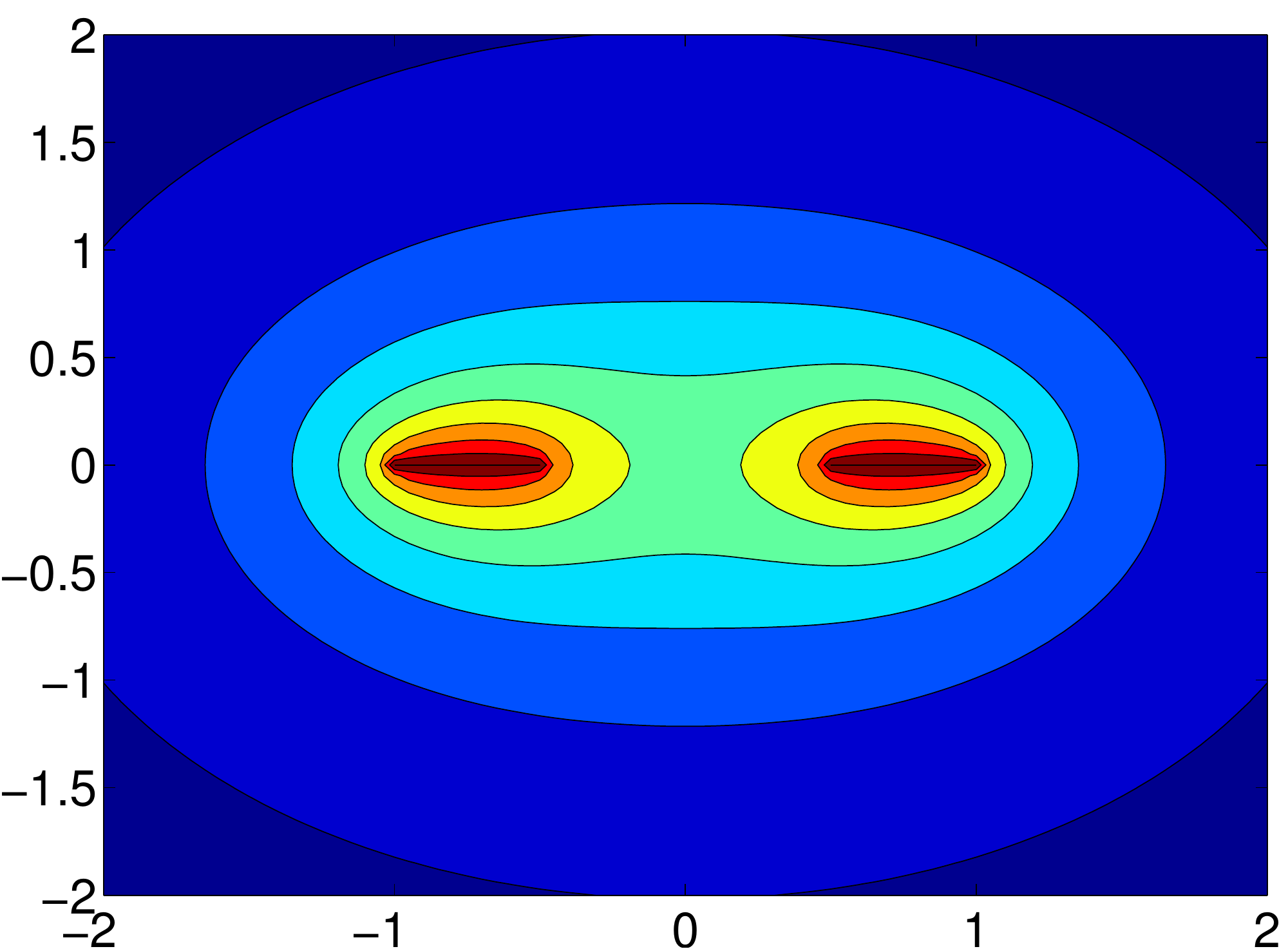}
}
\caption{Asymptotic convergence factor $R_{z_0}([-1,-\frac{1}{2}] \cup 
[\frac{1}{2}, 1])$ as a function of $z_0 \in \C$.}
\label{fig:ACF_sym_ints_3d}
\end{figure}

In Figure~\ref{fig:ACF_sym_ints_real} we plot the asymptotic convergence
factors $R_{z_0}(E_j)$ for the sets $E_j$ of the form~\eqref{eqn:Ej_sym} and 
real $z_0$ ranging from $-2$ to $2$.  Note that when $z_0$ is to the left 
or the right of the two intervals, i.e. $|z_0|>1$, the asymptotic convergence 
factors $R_{z_0}(E_j)$ are almost identical for all $j$, and they decrease 
quickly with increasing $|z_0|$.  On the other hand, when $z_0$ is between the 
two intervals the asymptotic convergence factors $R_{z_0}(E_j)$ strongly depend 
on $j$, and for a fixed $z_0$ they increase quickly with increasing $j$.  
Moreover, $R_{z_0}(E_j)$ for $|z_0|<2^{-j}$ is minimal when $z_0=0$,  i.e., 
when $z_0$ is the midpoint between the two intervals.

\begin{figure}
\begin{center}
\includegraphics[width=0.5\textwidth]{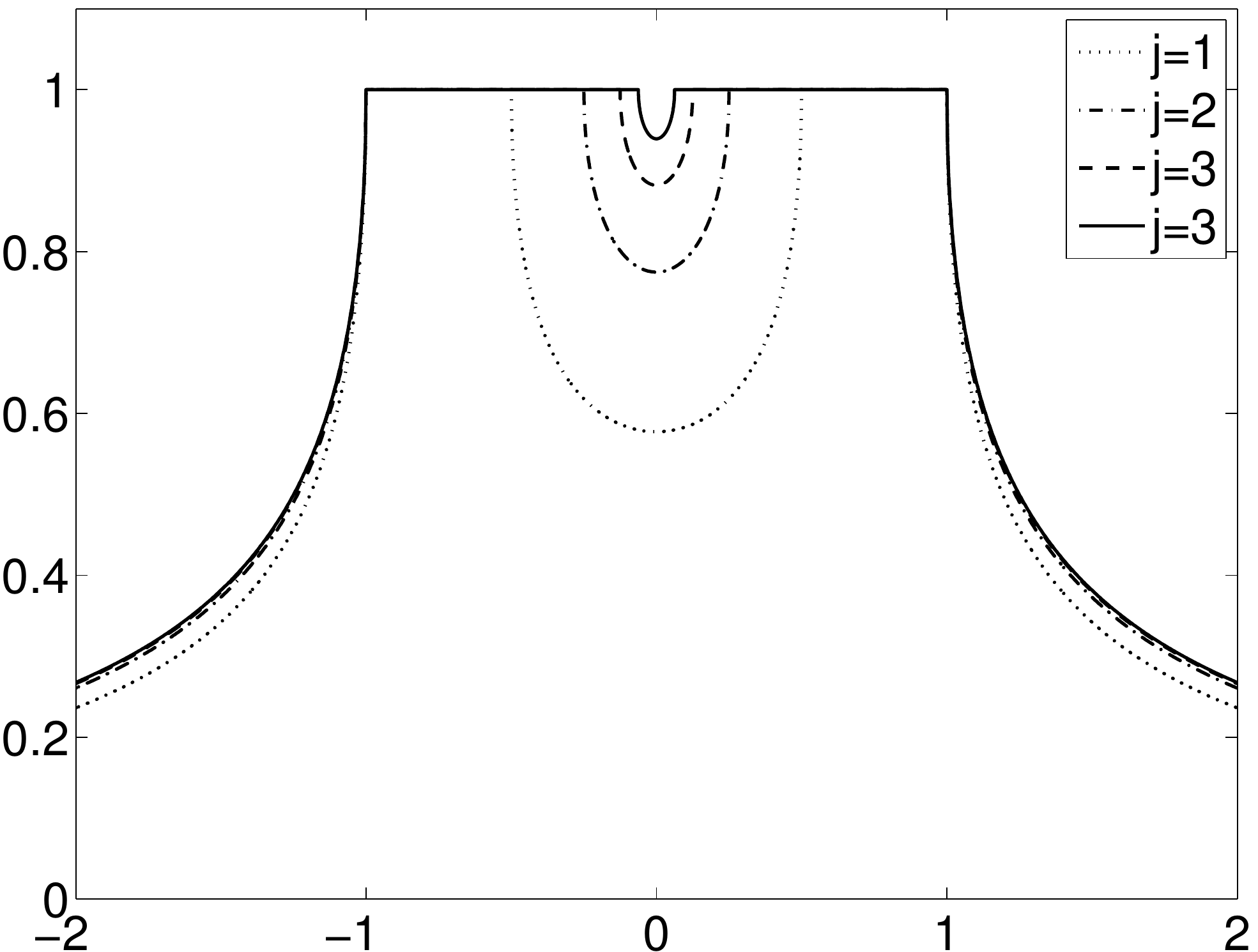}
\caption{Asymptotic convergence factors $R_{z_0}(E_j)$ for $E_j$ 
from~\eqref{eqn:Ej_sym} and $z_0 \in [-2,2]$.}
\label{fig:ACF_sym_ints_real}
\end{center}
\end{figure}

\subsection{Two arbitrary real intervals}
\label{subsec:non_sym_ints}

In this section we consider the general case of two real intervals, i.e.,
\begin{equation} \label{eqn:nonsym_ints}
E = [A, B] \cup [C, D] \quad \text{with } A < B < C < D.
\end{equation}
For such sets, the lemniscatic map and lemniscatic domain are not known 
analytically.  We therefore compute the map and the domain numerically using 
the method introduced in~\cite{NLS2015_numconf}.  This methods needs as its 
input a discretization of the boundary of the set $E$, which is assumed to 
consist of Jordan curves.  The numerical examples in~\cite{NLS2015_numconf} 
show that the method is efficient and works accurately even for domains with 
close-to-touching boundaries, non-convex boundaries, piecewise smooth 
boundaries, and of high connectivity.

We will apply two preliminary conformal maps in order to map $\widehat{\C} 
\backslash E$ for the set $E$ in~\eqref{eqn:nonsym_ints} onto a domain bounded 
by Jordan curves.
The preliminary maps are basically (inverse) Joukowski maps, as stated in the
following lemma, which can be proven by elementary means.

\begin{lemma} \label{lem:joukowski}
Let $\alpha, \beta \in \R$ with $\alpha < \beta$.  Then
\begin{equation*}
\begin{split}
\Phi &: \widehat{\C} \backslash [\alpha, \beta] \to \left\{ w \in \widehat{\C} 
: \abslr{w-\frac{\beta+\alpha}{2} } > \frac{\beta-\alpha}{4} \right\}, \\
&w = \Phi(z) = \frac{1}{2} \left( z + \frac{\beta+\alpha}{2} \pm 
\sqrt{(z-\alpha)(z-\beta)} \right),
\end{split}
\end{equation*}
where we take the branch of the square root such that $\abs{ \Phi(z) - 
\frac{\beta+\alpha}{2} } > \frac{\beta-\alpha}{4}$, is a bijective conformal 
map which is normalized at infinity by $\Phi(z) = z + \cO(1/z)$.
\end{lemma}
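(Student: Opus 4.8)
The plan is to verify directly that the proposed formula for $w = \Phi(z)$ is the Joukowski-type map with the stated normalization and mapping properties, and that the image domain is exactly the exterior of the stated disk. First I would recall the classical Joukowski map $J(\zeta) = \zeta + \frac{1}{\zeta}$, which maps the exterior of the unit circle conformally onto $\widehat{\C}\setminus[-2,2]$; after an affine change of variables, the map $z \mapsto \frac{\beta-\alpha}{4}\zeta + \frac{\beta-\alpha}{4}\frac{1}{\zeta} + \frac{\beta+\alpha}{2}$ sends $\{|\zeta|>1\}$ onto $\widehat{\C}\setminus[\alpha,\beta]$. Composing with the scaling $\zeta \mapsto w - \frac{\beta+\alpha}{2}$ rescaled by $\frac{4}{\beta-\alpha}$, one finds that $\Phi$ as defined should be the inverse of this composition. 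So the cleanest route is to \emph{invert} the relation: set $w - \frac{\beta+\alpha}{2} = \frac{\beta-\alpha}{4}\zeta$ with $|\zeta| > 1$ and check that $z = \frac{\beta-\alpha}{4}(\zeta + \frac{1}{\zeta}) + \frac{\beta+\alpha}{2}$ forces $w = \frac12(z + \frac{\beta+\alpha}{2} \pm \sqrt{(z-\alpha)(z-\beta)})$ with the branch selected so that $|w - \frac{\beta+\alpha}{2}| > \frac{\beta-\alpha}{4}$.

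The key steps in order would be: (1) rewrite $(z-\alpha)(z-\beta) = (z - \frac{\beta+\alpha}{2})^2 - (\frac{\beta-\alpha}{2})^2$, so that the square root appearing in the formula is the natural branch on $\widehat{\C}\setminus[\alpha,\beta]$; (2) solve the quadratic $w^2 - (z + \frac{\beta+\alpha}{2})w + (\text{something})$ that $w$ satisfies — more precisely, check that the candidate $w$ satisfies $z = (w-\frac{\beta+\alpha}{2}) + \frac{(\frac{\beta-\alpha}{4})^2}{w - \frac{\beta+\alpha}{2}} + \frac{\beta+\alpha}{2}$, which is exactly the affine Joukowski relation; (3) verify the branch choice: of the two roots $w_{\pm}$ of that relation, their product satisfies $(w_+ - \frac{\beta+\alpha}{2})(w_- - \frac{\beta+\alpha}{2}) = (\frac{\beta-\alpha}{4})^2$, so exactly one lies outside the disk of radius $\frac{\beta-\alpha}{4}$ centered at $\frac{\beta+\alpha}{2}$, and that is the one selected by the stated sign convention; (4) confirm that $\Phi$ maps $\widehat{\C}\setminus[\alpha,\beta]$ \emph{onto} the full exterior of that disk (surjectivity follows because the Joukowski map is a bijection between the two regions, and injectivity likewise); (5) read off the normalization at infinity: since $\sqrt{(z-\alpha)(z-\beta)} = z - \frac{\beta+\alpha}{2} + \cO(1/z)$ with the chosen branch, we get $w = \frac12(z + \frac{\beta+\alpha}{2} + z - \frac{\beta+\alpha}{2}) + \cO(1/z) = z + \cO(1/z)$.

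The main obstacle I expect is step (3), the careful bookkeeping of the branch of the square root. The statement phrases the branch condition in terms of the \emph{image} ($|\Phi(z) - \frac{\beta+\alpha}{2}| > \frac{\beta-\alpha}{4}$) rather than in terms of a cut in the $z$-plane, so one must argue that this condition consistently picks a single-valued analytic branch on all of $\widehat{\C}\setminus[\alpha,\beta]$. The clean way to handle this is to observe that $\widehat{\C}\setminus[\alpha,\beta]$ is simply connected (in $\widehat{\C}$), that $(z-\alpha)(z-\beta)$ is nonzero and has winding number zero there, hence admits a single-valued holomorphic square root, and the two branches differ only by sign; pinning down the branch at $z = \infty$ by requiring the $\cO(1/z)$ normalization then determines it globally, and one checks this is the branch that sends everything to the exterior of the disk. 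Everything else — the algebraic identities, the product-of-roots computation, the asymptotic expansion — is routine and "can be proven by elementary means," as the paper says.
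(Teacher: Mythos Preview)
Your proposal is correct and complete in outline; the algebraic verification via the affine Joukowski relation, the product-of-roots argument for the branch selection, and the asymptotic expansion are all sound. The paper does not actually give a proof of this lemma---it only remarks that it ``can be proven by elementary means''---so your approach is exactly the kind of argument the authors have in mind, and there is nothing further to compare.
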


We now construct the lemniscatic map of $E = [A, B] \cup [C, D]$.  The 
construction is illustrated in Figure~\ref{fig:Phi_nonsym_ints} for the set
$E = [-1, -h] \cup [h^2, 1]$ with $h = 0.15$.
\begin{figure}
\centerline{
\subfigure[Original domain $\cK = \widehat{\C} \backslash E$]{
\includegraphics[width=0.5\textwidth]{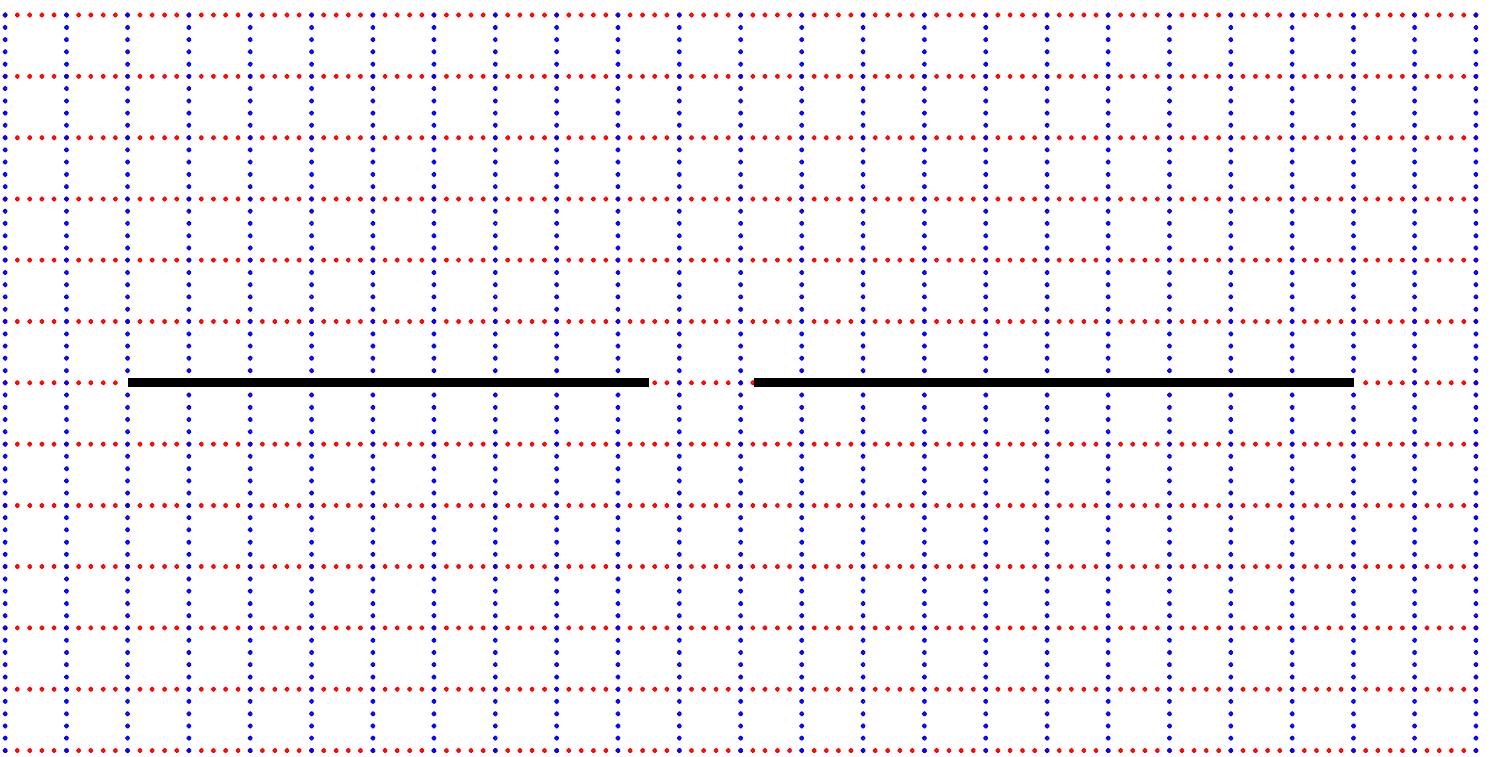}}
\subfigure[$\cK_1 = \Phi_1(\cK)$]{ \label{fig:Phi_nonsym_ints_step1}
\includegraphics[width=0.5\textwidth]{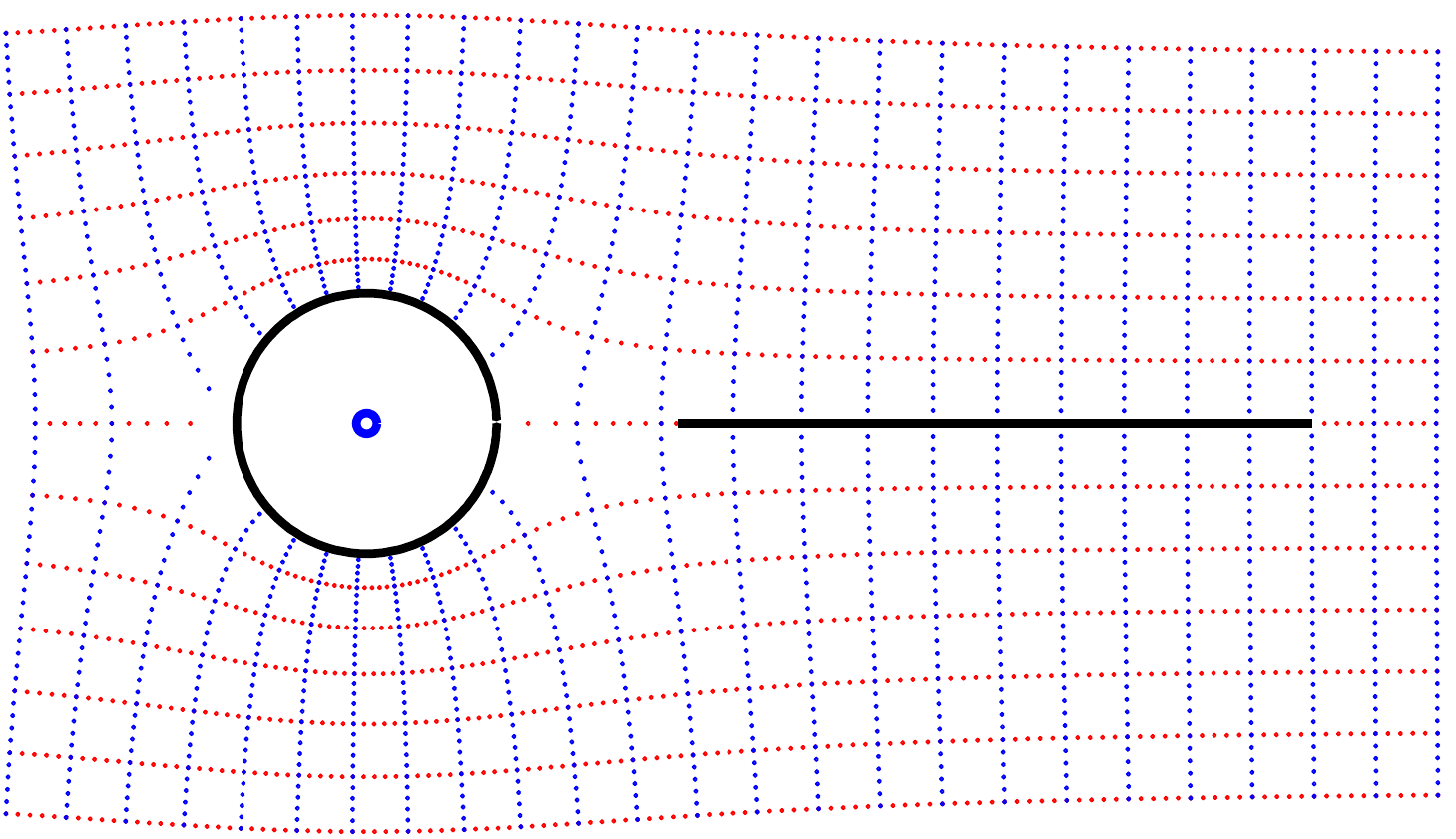}}
}
\centerline{
\subfigure[$\cK_2 = \Phi_2(\cK_1)$]{ \label{fig:Phi_nonsym_ints_step2}
\includegraphics[width=0.5\textwidth]{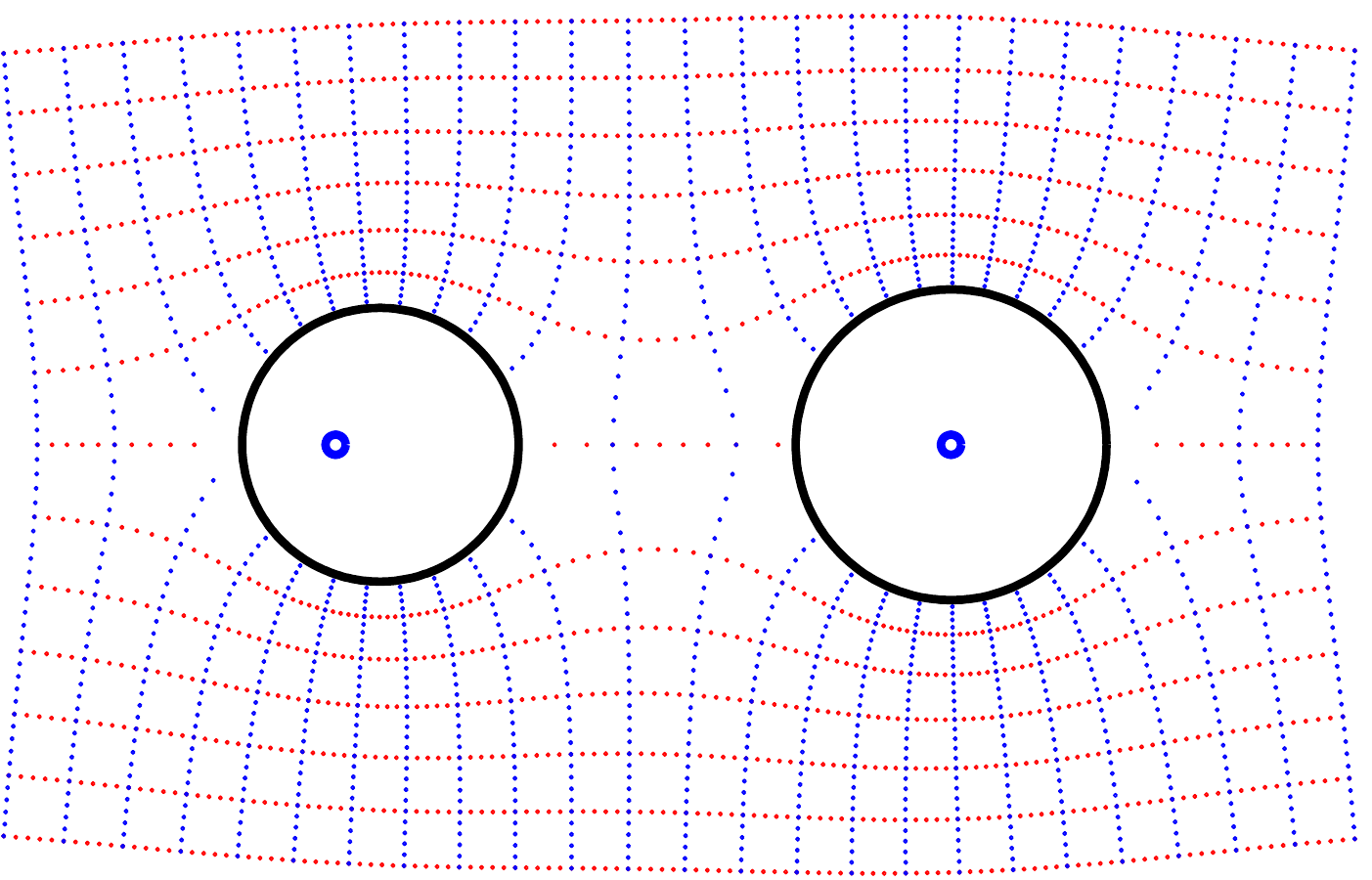}}
\subfigure[Lemniscatic domain $\cL = \Phi_3(\cK_2)$]{
\includegraphics[width=0.5\textwidth]{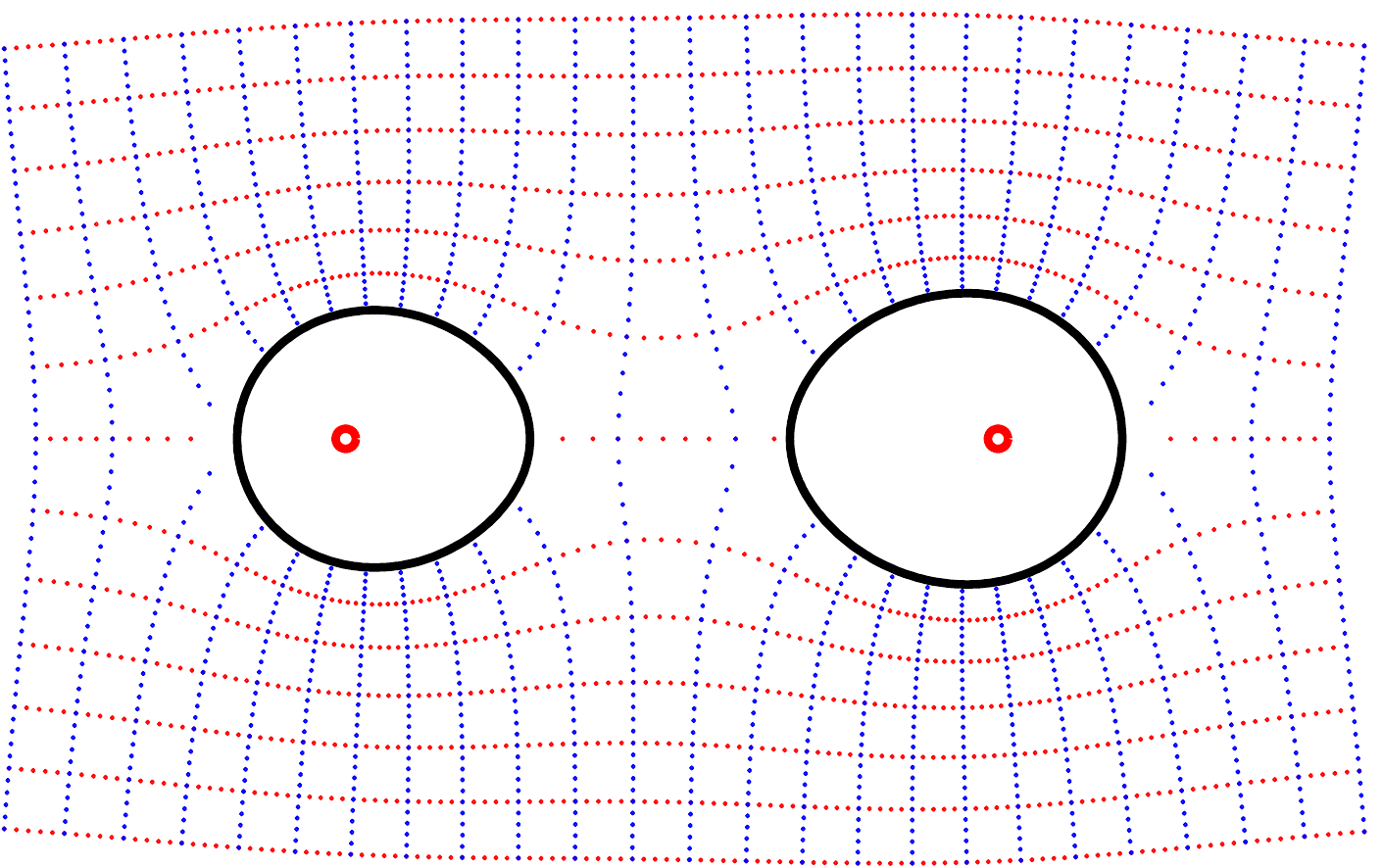}}
}
\caption{Construction of the lemniscatic map of $[-1, -h] \cup [h^2, 1]$ with 
$h = 0.15$.}
\label{fig:Phi_nonsym_ints}
\end{figure}
First we apply Lemma~\ref{lem:joukowski} to the interval $[A, B]$ to obtain the 
conformal map $\Phi_1$, mapping $\cK$ onto $\cK_1 = \Phi_1(\cK)$, which is 
the exterior of
\begin{equation*}
\{ z_1 : \abs{z_1-w_1} = r_1 \} \cup [\widetilde{C}, \widetilde{D}],
\end{equation*}
with $w_1 = \frac{B+A}{2}$, $r_1 = \frac{B-A}{4}$, $\widetilde{C} = 
\Phi_1(C)$, 
and $\widetilde{D} = \Phi_1(D)$; see Figure~\ref{fig:Phi_nonsym_ints_step1}.
In a second step, let $\Phi_2$ be given by Lemma~\ref{lem:joukowski} for the 
interval $[\widetilde{C}, \widetilde{D}]$.  Then $\Phi_2$ maps $\cK_1$ onto the 
exterior of
\begin{equation*}
\Phi_2 ( \{ z_1 : \abs{z_1-w_1} = r_1 \} ) \cup \{ z_2 : \abs{z_2-w_2} = r_2 \},
\end{equation*}
where $w_2 = \frac{\widetilde{D}+\widetilde{C}}{2}$ and $r_2 = 
\frac{\widetilde{D}-\widetilde{C}}{4}$; see 
Figure~\ref{fig:Phi_nonsym_ints_step2}.
This domain is bounded by two analytic Jordan curves, which we parametrize by
\begin{equation} \label{eqn:param}
\eta_1(t) = \Phi_2( w_1 + r_1 e^{-it} )
\quad \text{and} \quad
\eta_2(t) = w_2 + r_2 e^{-it}, \quad t \in [0, 2\pi].
\end{equation}
We then apply the numerical method from~\cite{NLS2015_numconf} to compute the 
lemniscatic domain $\cL$ and lemniscatic map $\Phi_3$ of $\cK_2 = 
\Phi_2(\cK_1)$.  As mentioned above, the input of this method is a 
discretization of the boundary, which is easily computable using the 
parameterization~\eqref{eqn:param}.  The result is the lemniscatic map
\begin{equation*}
\Phi \coloneq \Phi_3 \circ \Phi_2 \circ \Phi_1 : \widehat{\C} \backslash E \to 
\cL,
\end{equation*}
where $\Phi_1$ and $\Phi_2$ are given analytically as above, and $\Phi_3$ is 
computed numerically.  The output of the numerical method 
from~\cite{NLS2015_numconf} are the parameters of $\cL$ and the boundary values 
of $\Phi_3$ at the discretization points on the boundary.
The values of $\Phi_3$ at other points can be computed by Cauchy's 
integral formula for domains with infinity as interior point, applied to the 
function $\Phi_3(z) - z$, which is analytic in $\cK_2$ and vanishes at 
infinity.  We thus have
\begin{equation*}
\Phi_3(z) = z + \frac{1}{2 \pi i} \int_{\partial \cK_2} 
\frac{\Phi_3(\zeta)-\zeta}{\zeta-z} \, d\zeta,
\end{equation*}
where the boundary is parametrized such that $\cK_2$ is to the left of the 
contour;  see~\cite{NLS2015_numconf} for details on the practical computation 
of this integral.
Note that this numerical method extends to any finite number of intervals.

With the lemniscatic map $\Phi$ and lemniscatic domain $\cL$ available, we can 
compute the Faber--Walsh polynomials by
\begin{equation*}
b_k(z)
= \frac{1}{2 \pi i} \int_\gamma \frac{u_k(\Phi(\zeta))}{\zeta-z} \, d\zeta
= \frac{1}{2 \pi i} \int_\gamma \frac{\prod_{j=1}^k 
(\Phi(\zeta)-\alpha_j)}{\zeta-z} \, d\zeta,
\end{equation*}
see~\eqref{eqn:def_bk}, where $\gamma$ is any positively oriented contour 
containing $E$ and $z$ in its interior, and where the sequence 
$(\alpha_j)_{j=1}^\infty$ is chosen from the foci of the lemniscatic domain 
as indicated in the discussion below Lemma~\ref{lem:alpha_n}.

Figure~\ref{fig:bn_nonsym_ints} shows some computed Faber--Walsh polynomials of 
even (left) and odd (right) degrees for the set $E = [-1, -\frac{1}{4}] \cup 
[\frac{1}{3}, 1]$.  Unlike in the case of two equal intervals, the 
Faber--Walsh polynomials for two unequal intervals are in general not
Chebyshev polynomials, nor are the normalized Faber--Walsh polynomials the
optimal polynomials.
We numerically compute the Faber--Walsh polynomials $b_{k,j}$ for the sets
\begin{equation}\label{eqn:Ej_nonsym}
E_j\coloneq [-1,-2^{-j}] \cup [3^{-1},1], \quad j=1,2,3,4.
\end{equation}
Figure~\ref{fig:nfw_nonsym_ints_norm} (left) shows the corresponding values 
$\norm{b_{k,j}}_E / \abs{b_{k,j}(0)}$ for $k = 1, \ldots, 30$.
As for the equal intervals, a comparison with the values $R_0(E_j)^k$ shows 
that the convergence speed to zero of the norms matches 
closely the predicted asymptotic rate, already for small values of $k$.
Unlike in the case of two equal intervals, the sequence of norms has a few 
irregular jumps, which are due to the lack of symmetry in the problem.
More precisely, all jumps happen when one of the foci $a_1, a_2$ of the 
lemniscatic domain appears twice in a row in the sequence 
$(\alpha_j)_{j=1}^\infty$.  This happens in the construction of the sequence 
as described below Lemma~\ref{lem:alpha_n}, since for two unequal intervals 
the exponents $m_1$ and $m_2$ of the lemniscatic domain are different.

\begin{figure}
\centerline{
\includegraphics[width=0.5\textwidth]{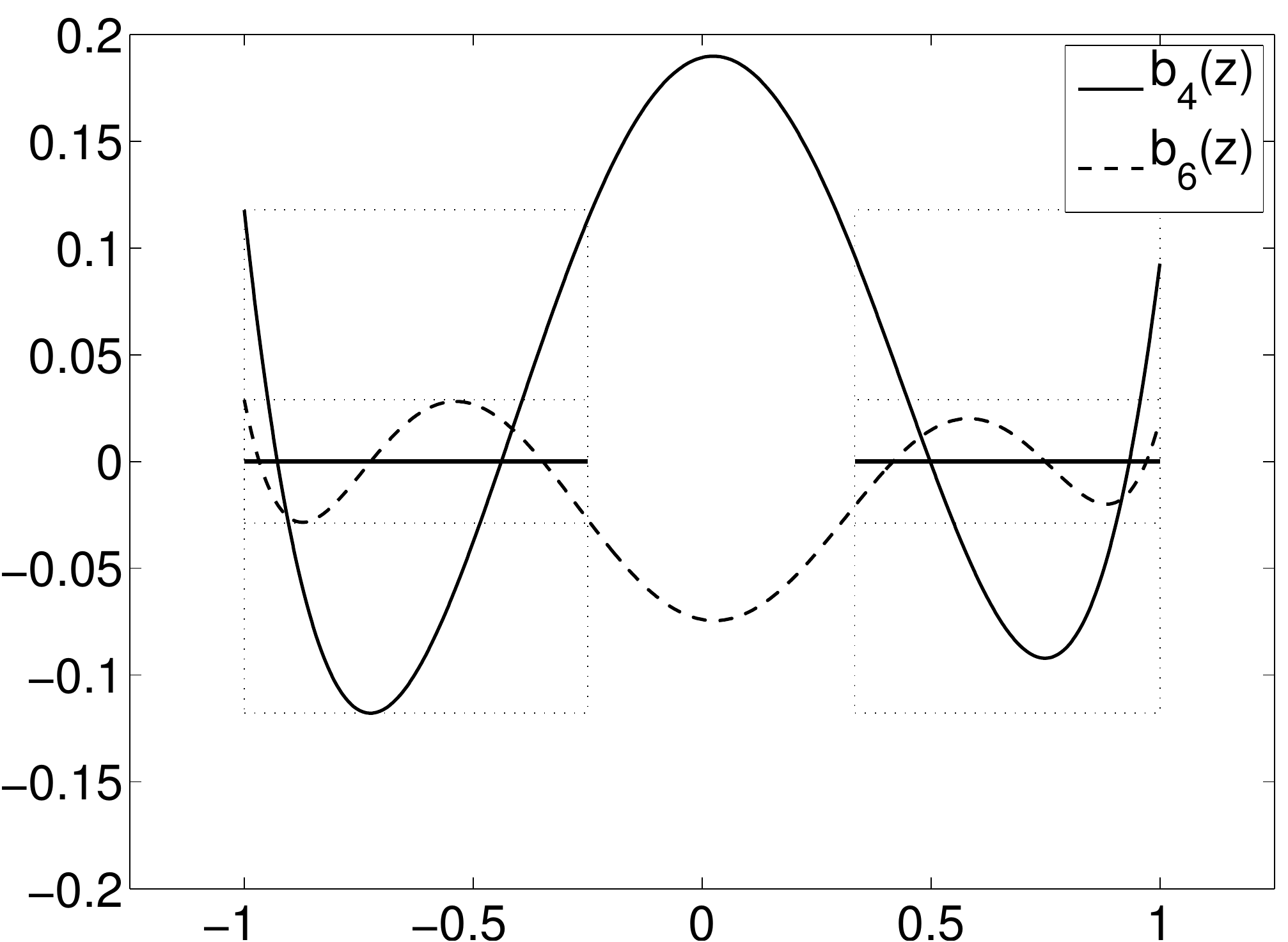}
\includegraphics[width=0.5\textwidth]{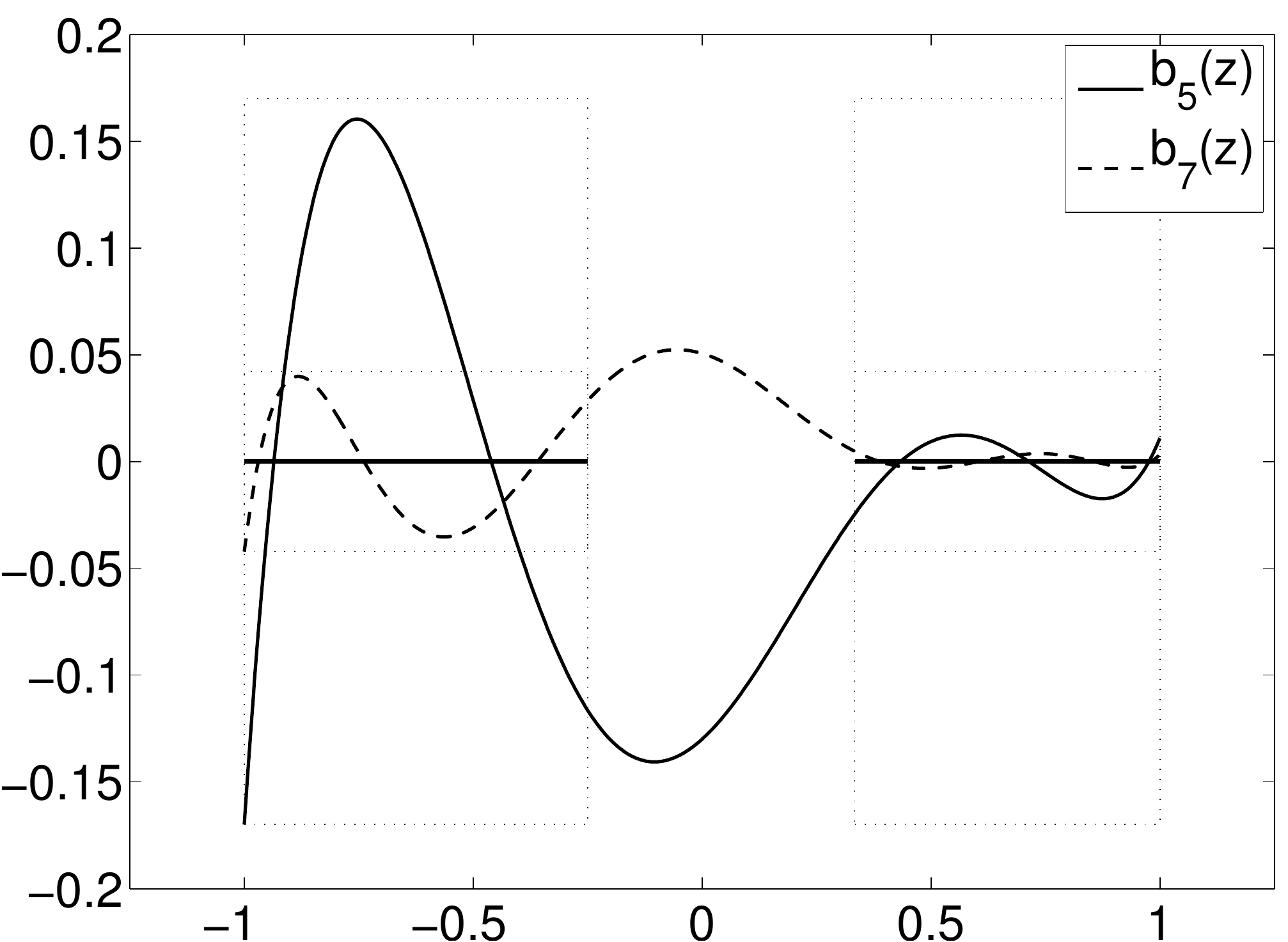}
}
\caption{Faber--Walsh polynomials $b_k$ for $E = [-1, -\frac{1}{4}] \cup 
[\frac{1}{3}, 1]$.}
\label{fig:bn_nonsym_ints}
\end{figure}

\begin{figure}
\centerline{
\includegraphics[width=0.5\textwidth]{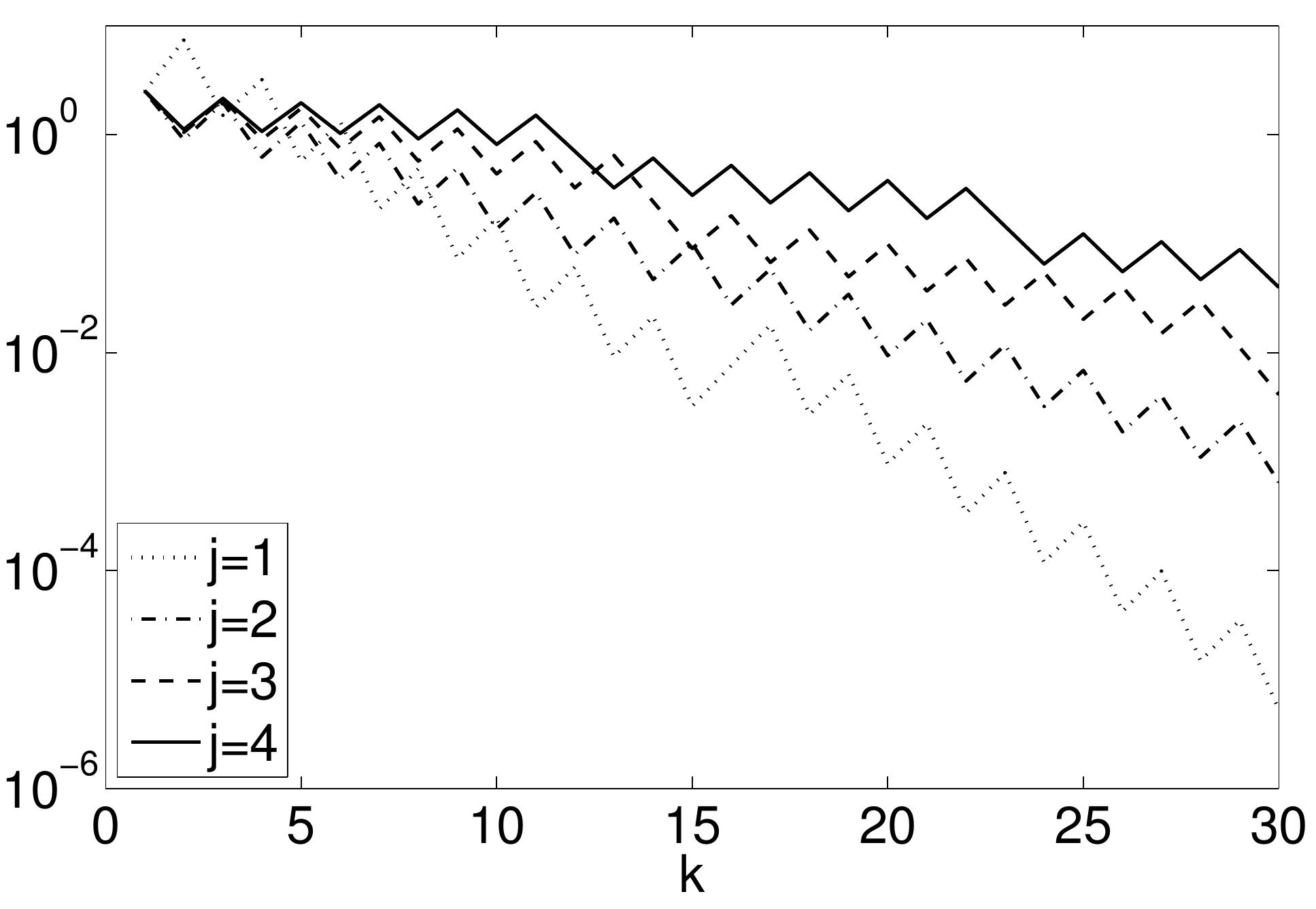}
\includegraphics[width=0.5\textwidth]{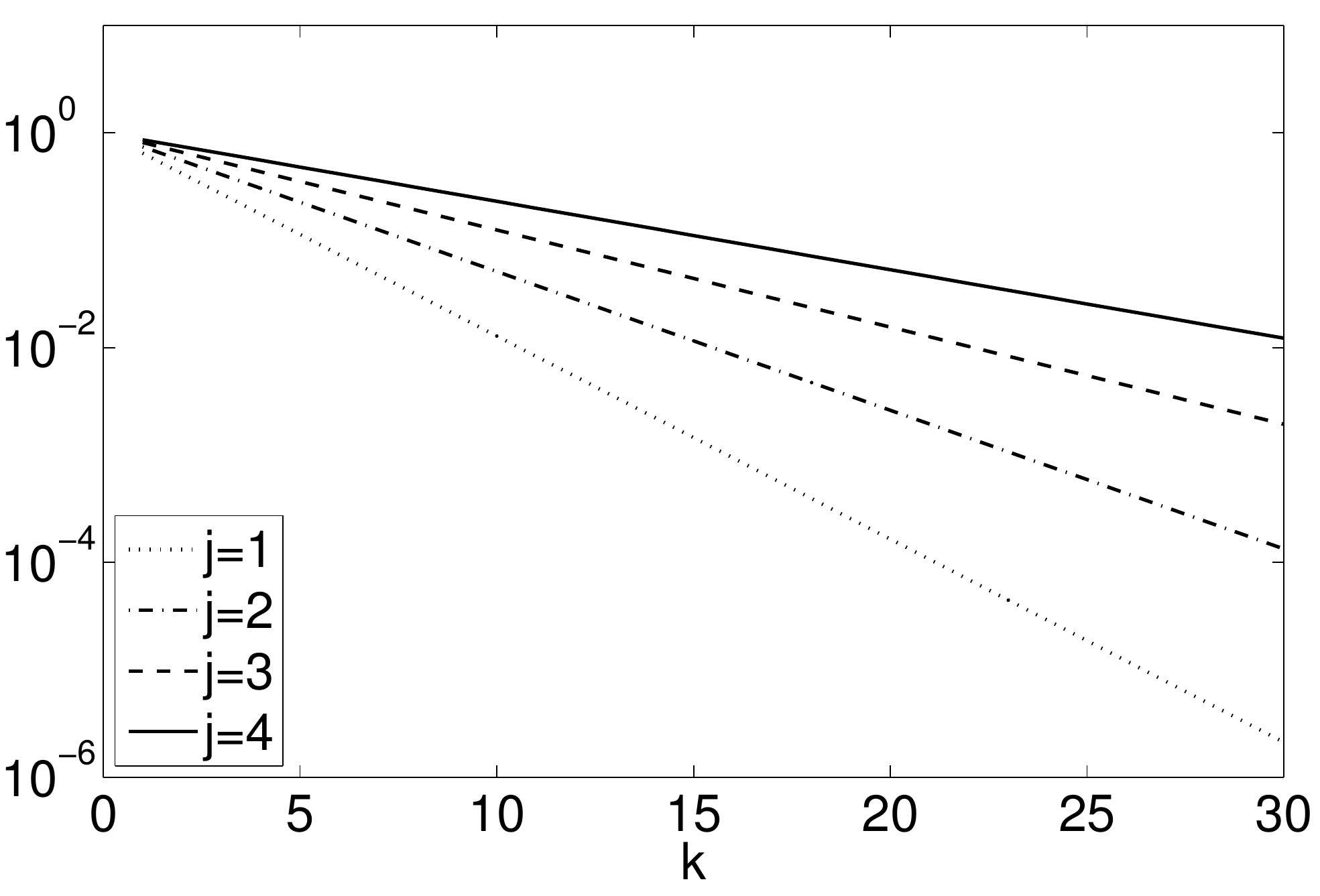}
}
\caption{The values $\frac{\|b_{k,j}\|_{E_j}}{|b_{k,j}(0)|}$ (left) and 
$R_0(E_j)^k$ (right) for the sets $E_j$ from~\eqref{eqn:Ej_nonsym}.}
\label{fig:nfw_nonsym_ints_norm}
\end{figure}

Finally, the numerical method from~\cite{NLS2015_numconf} yields the 
lemniscatic map $\Phi$, as well as the parameters of the lemniscatic domain 
$\cL$.  Therefore, the asymptotic convergence factor can be numerically 
computed by its characterization~\eqref{eqn:ACF_green}.
In Figure~\ref{fig:ACF_nonsym_ints_3d} we plot the asymptotic convergence 
factor $R_{z_0}( [-1, - \frac{1}{2}] \cup [\frac{1}{3}, 1] )$ as a function of 
$z_0 \in \C$.

\begin{figure}
\centerline{
\includegraphics[width=0.5\textwidth]{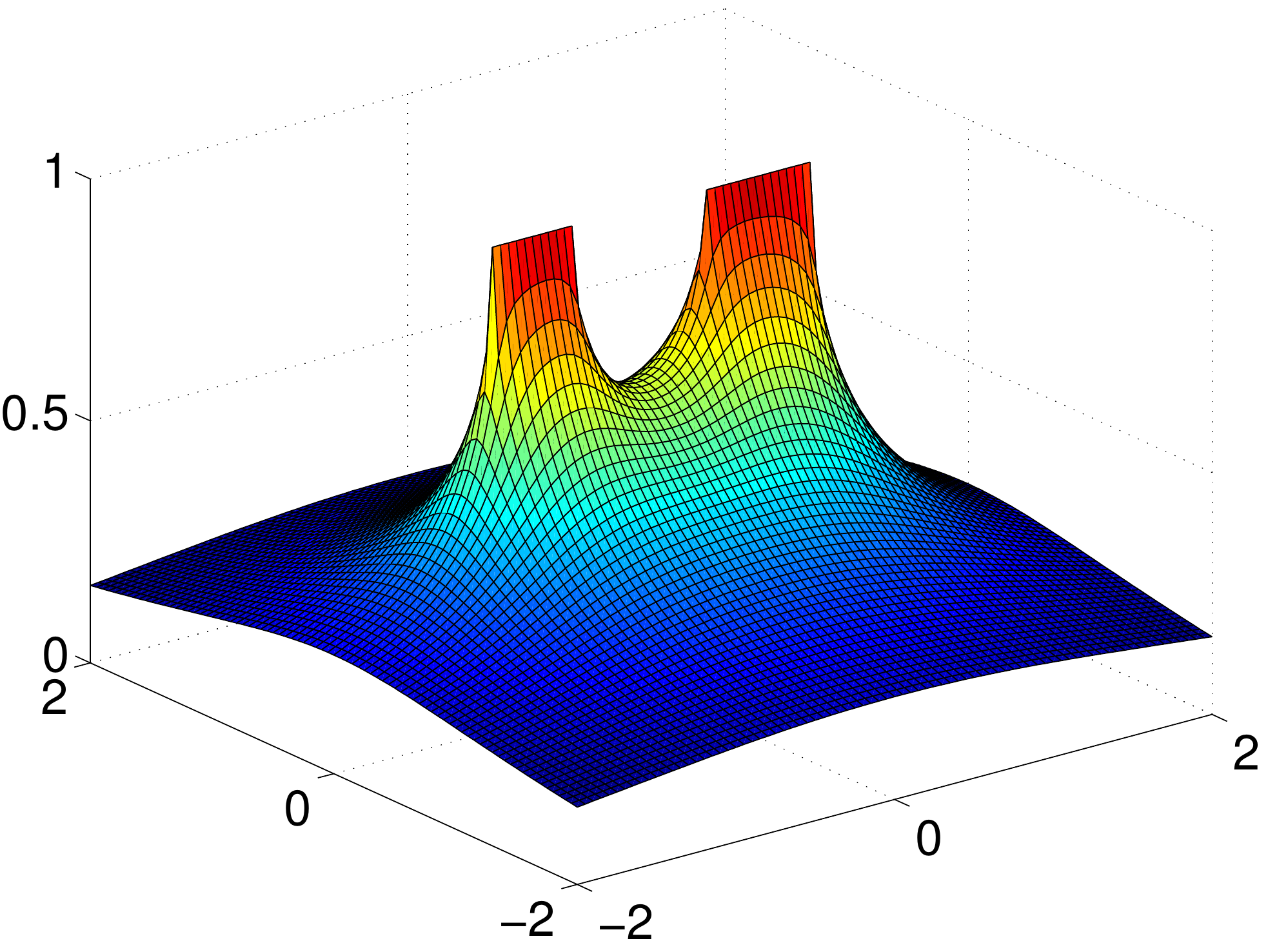}
\includegraphics[width=0.5\textwidth]{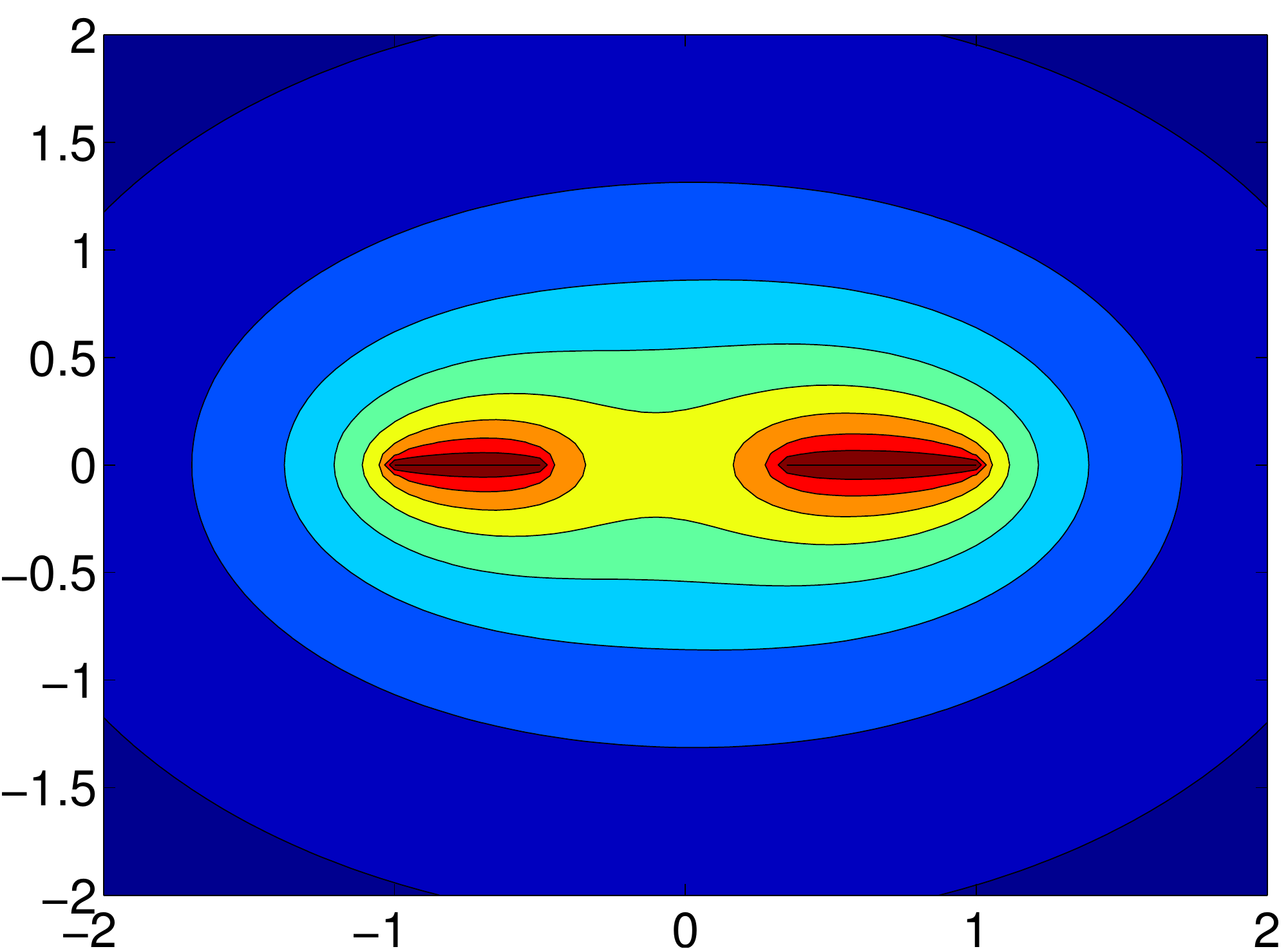}
}
\caption{Asymptotic convergence factor $R_{z_0}([-1,-\frac{1}{2}] \cup 
[\frac{1}{3}, 1])$ as a function of $z_0 \in \C$.}
\label{fig:ACF_nonsym_ints_3d}
\end{figure}

In Figure~\ref{fig:ACF_nonsym_ints_real} we plot the asymptotic convergence
factors $R_{z_0}(E_j)$ for the sets $E_j$ from~\eqref{eqn:Ej_nonsym} and real 
$z_0$ ranging from $-2$ to $2$.
Similar remarks as in the case of two equal intervals apply, with one 
exception:
Here the computation shows that $R_{z_0}(E_j)$ for $z_0 \in [-2^{-j}, 3^{-1}]$ 
attains its minimum not at the midpoint between the two intervals, but slightly 
to its left for $j = 1$, and to its right for $j = 2, 3, 4$.
A similar observation has already been made by 
Fischer~\cite[Example~3.4.5]{Fischer1996}.

\begin{figure}
\begin{center}
\includegraphics[width=0.5\textwidth]{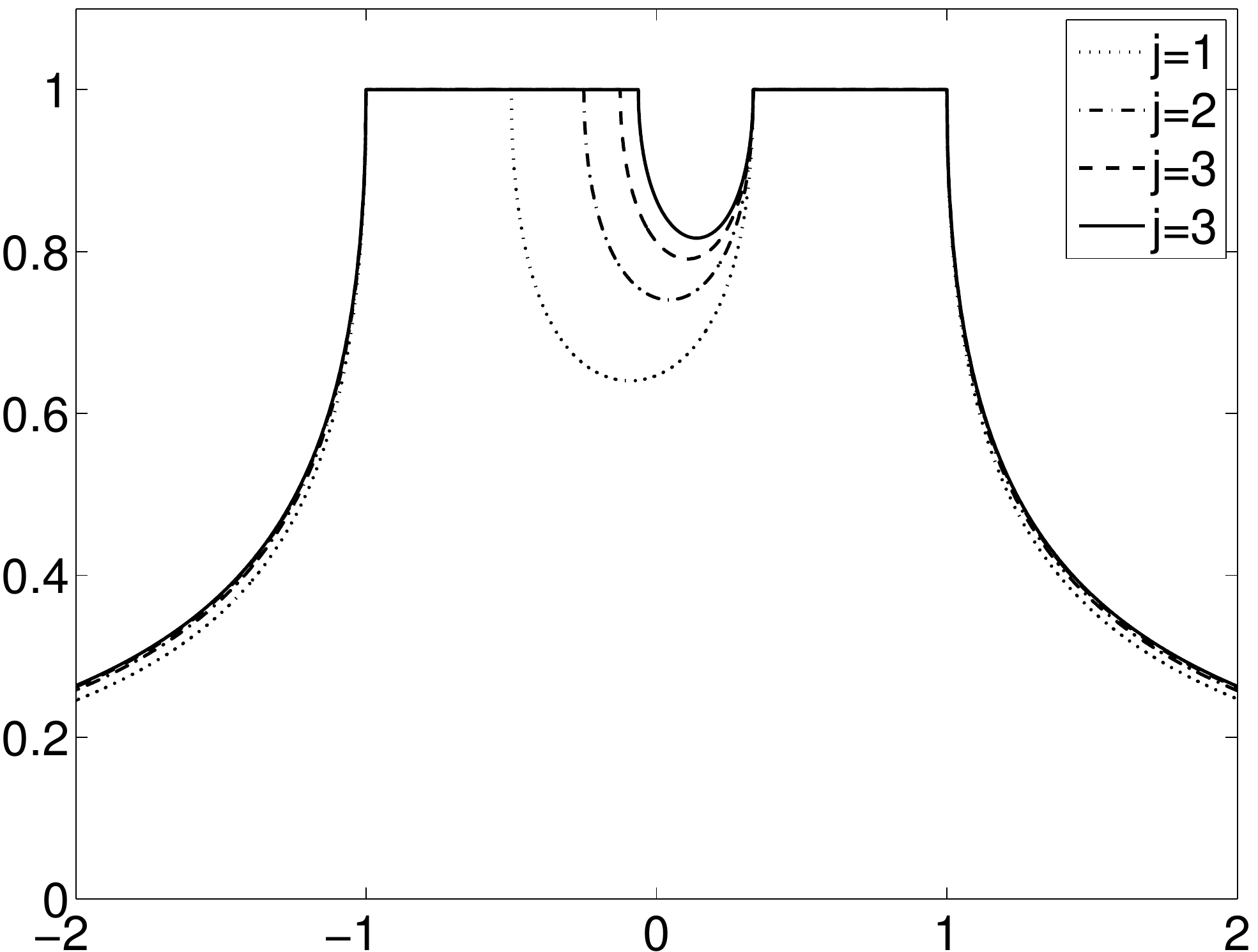}
\caption{Asymptotic convergence factors $R_{z_0}(E_j)$ for $E_j$ 
from~\eqref{eqn:Ej_nonsym} and $z_0 \in [-2,2]$.}
\label{fig:ACF_nonsym_ints_real}
\end{center}
\end{figure}

%% End of section
 
\section{Two non-real examples}
\label{sect:examples}

In this section we give examples of Faber--Walsh polynomials for sets $E$ that 
are not subsets of the real line.

\subsection{Two disks}
\label{subsec:disks}

We consider sets $E$ consisting of two disks of the same radius, i.e.,
\begin{equation*}
E = E(z_0, r) \coloneq \{ z \in \C : \abs{z-z_0} \leq r \} \cup \{ z \in \C : 
\abs{z+z_0} \leq r \},
\end{equation*}
where we take $z_0, r \in \R$ with $0 < r < z_0$.  The lemniscatic map of 
$E(z_0, r)$ is known analytically from~\cite[Theorem~4.2]{SeteLiesen2015}, and 
the lemniscatic domain is of the form
\begin{equation*}
\cL = \cL(z_0, r) = \{ w \in \widehat{\C} : \abs{w-a_1}^{1/2} \abs{w+a_1}^{1/2} 
> \mu \}
\end{equation*}
for some $a_1 > 0$ and $\mu > 0$.  With these $\cL$ and $\Phi$ we obtain the 
Faber--Walsh polynomials for $E(z_0, r)$ and $(a_1, -a_1, a_1, -a_1, \ldots)$ 
by their integral representation; see also the discussion in 
Section~\ref{subsec:non_sym_ints}.

In Figure~\ref{fig:bn_2disks_pp} we plot the phase portraits of several 
Faber--Walsh polynomials for $E(1, 0.8)$; 
see~\cite{Wegert2012,WegertSemmler2011} for details on phase portraits.
The figure shows that the $k$ roots of $b_k$ are all contained in $E$.
\begin{figure}
\centerline{
\includegraphics[width=0.5\textwidth]{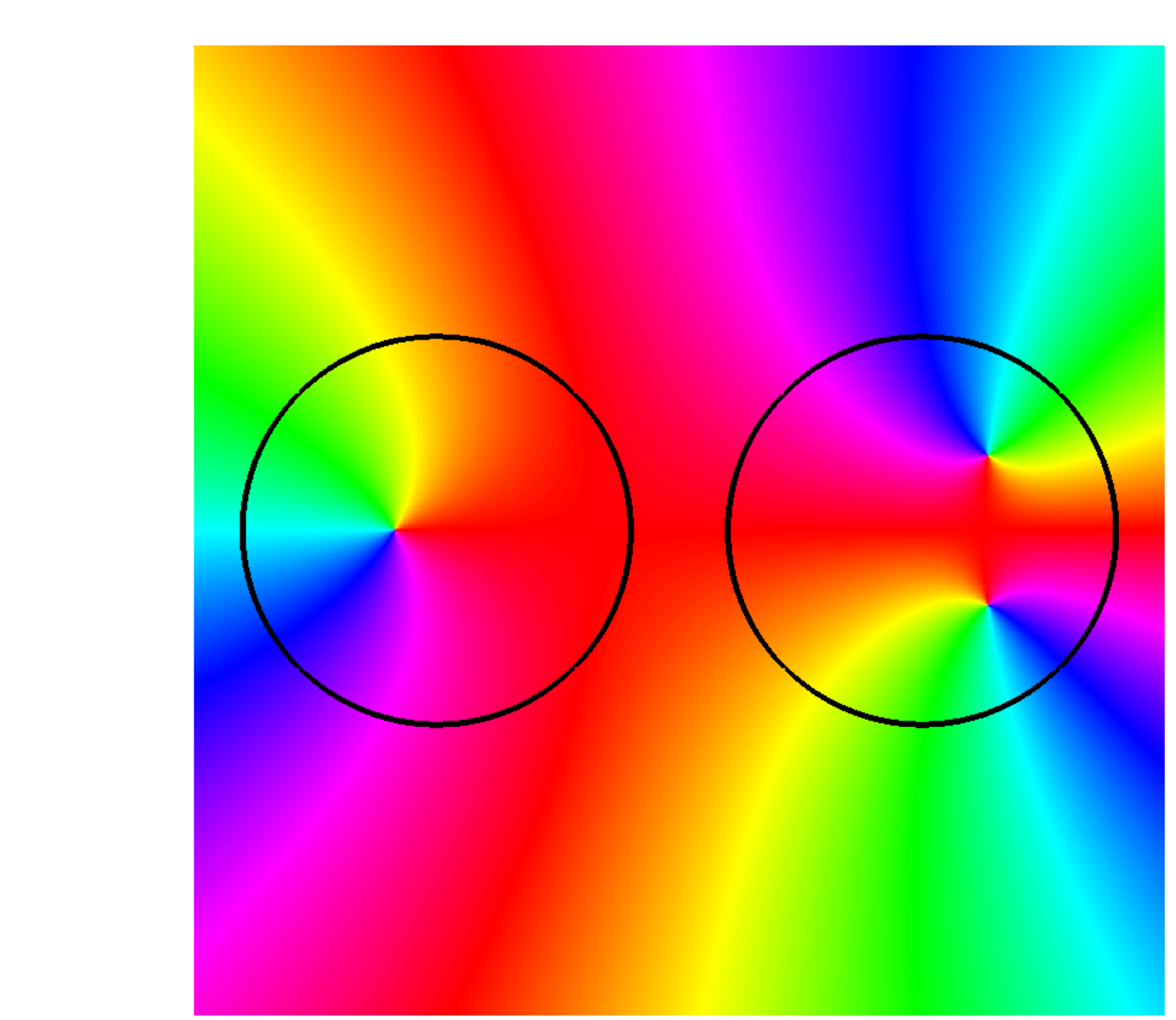}
\includegraphics[width=0.5\textwidth]{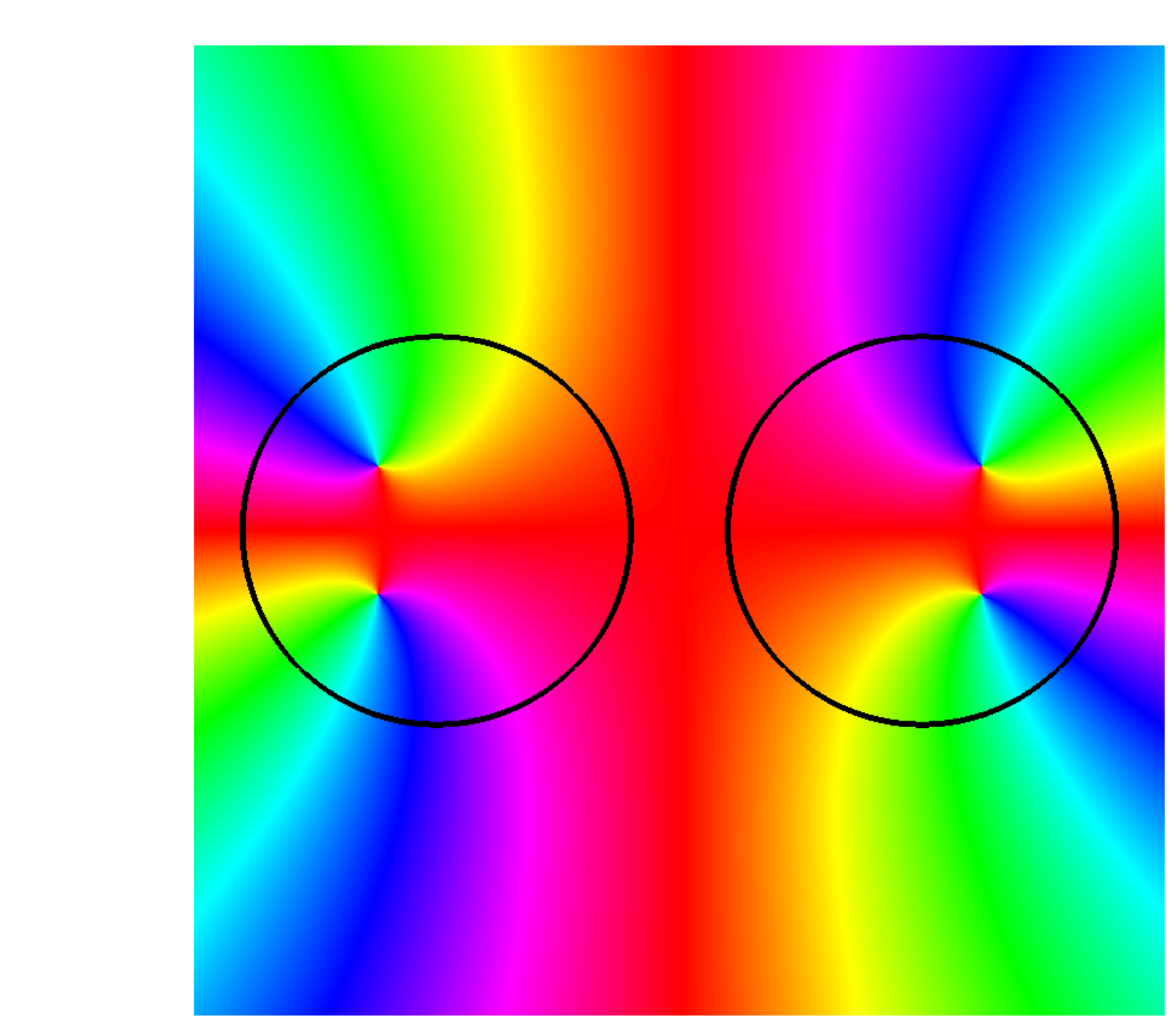}
}
\centerline{
\includegraphics[width=0.5\textwidth]{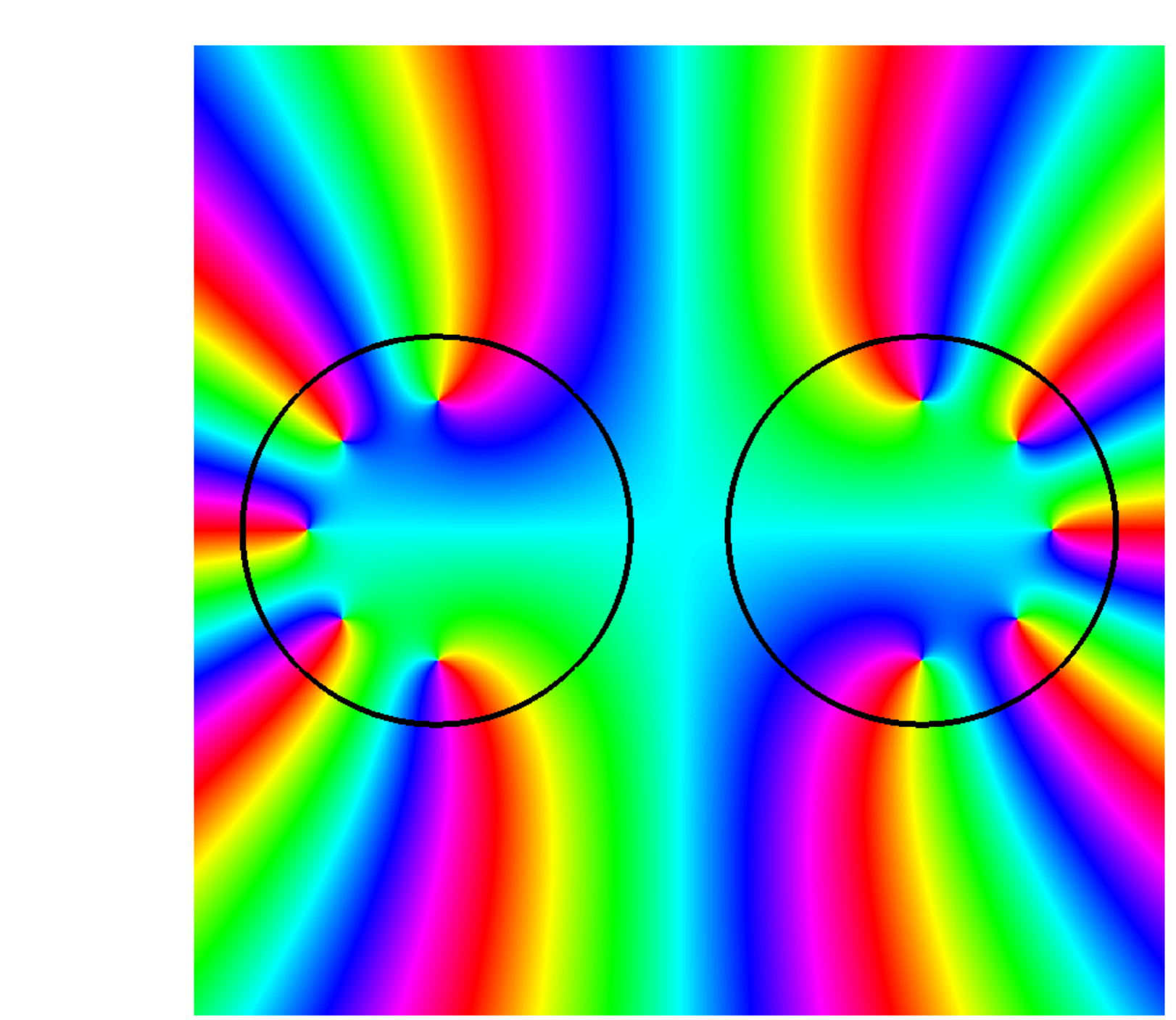}
\includegraphics[width=0.5\textwidth]{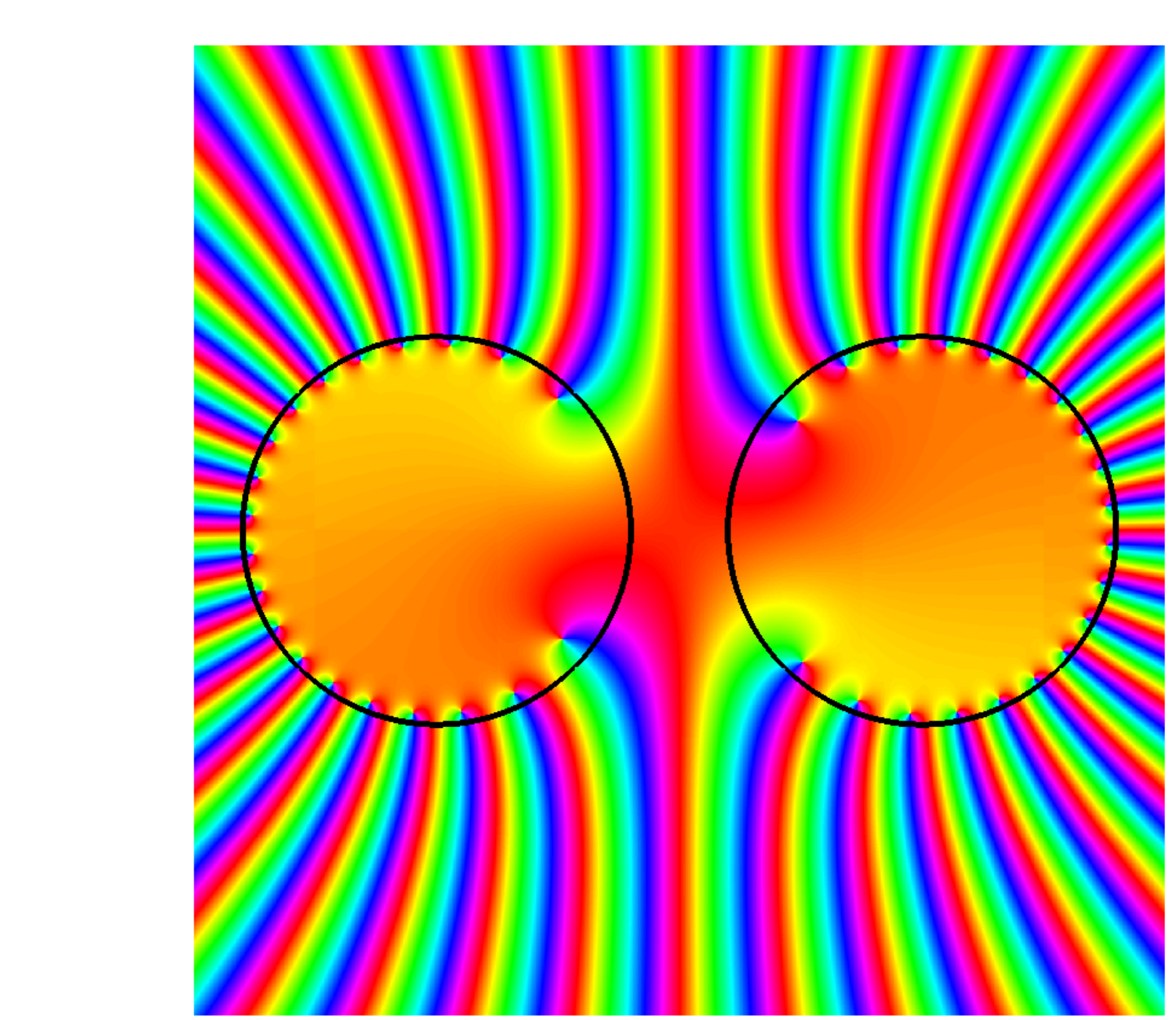}
}
\caption{Phase portraits of Faber--Walsh polynomials $b_k$ for $E(1, 0.8)$ for
$k = 3, 4, 10, 40$.}
\label{fig:bn_2disks_pp}
\end{figure}
We further compute the Faber--Walsh polynomials $b_{k,r}$ for the sets
\begin{equation}
E_r = E(1, r) \quad \text{for } r = 0.5, 0.7, 0.9, \label{eqn:Er_disks}
\end{equation}
and the sequence $(a_1, -a_1, a_1, -a_1, \ldots)$.  In 
Figure~\ref{fig:nfw_2disks_norm} (left) we plot the values 
$\frac{\norm{b_{k,r}}_{E_r}}{\abs{b_{k,r}(0)}}$ for $k = 1, \ldots, 30$.
\begin{figure}
\centerline{
\includegraphics[width=0.5\textwidth]{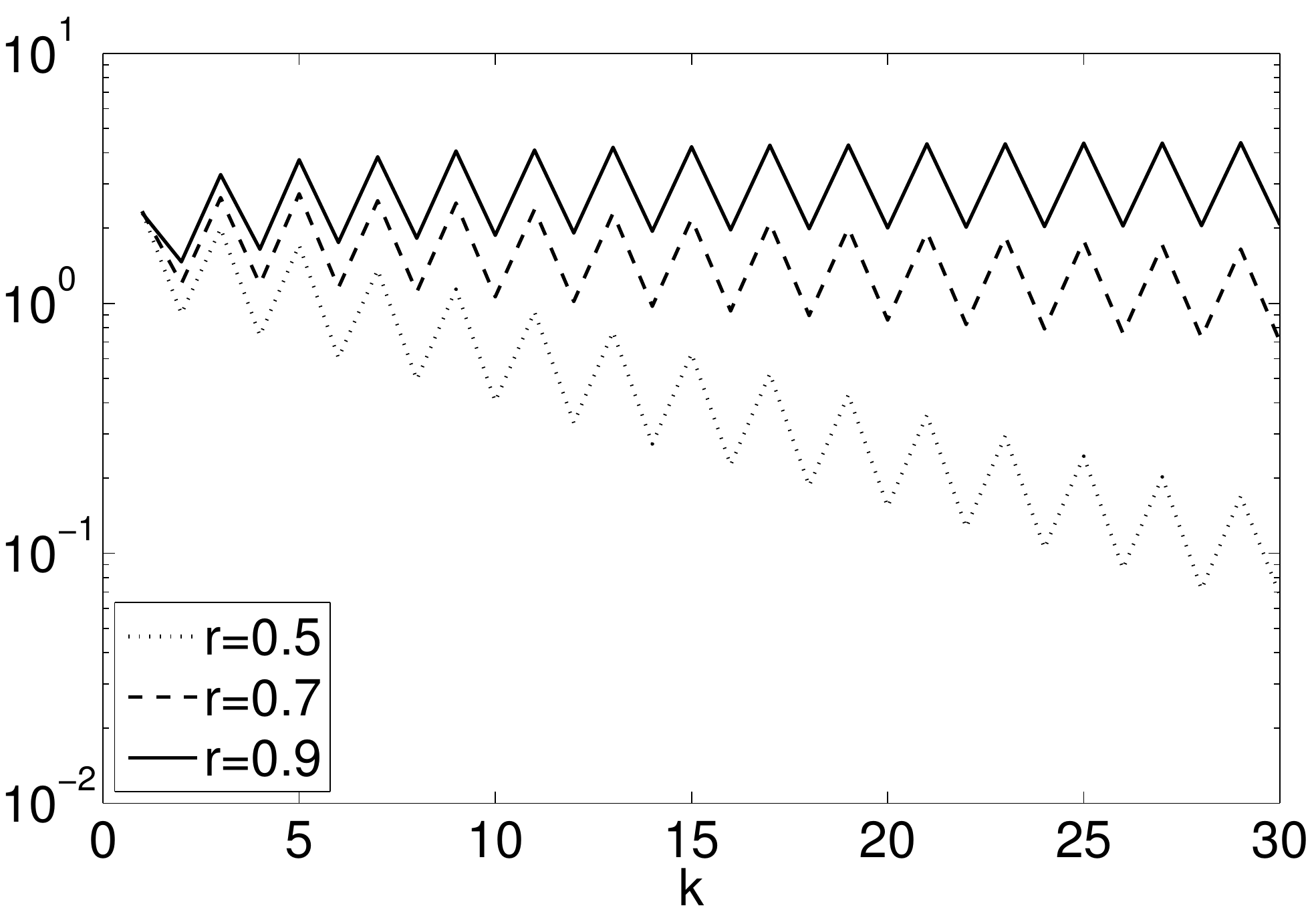}
\includegraphics[width=0.5\textwidth]{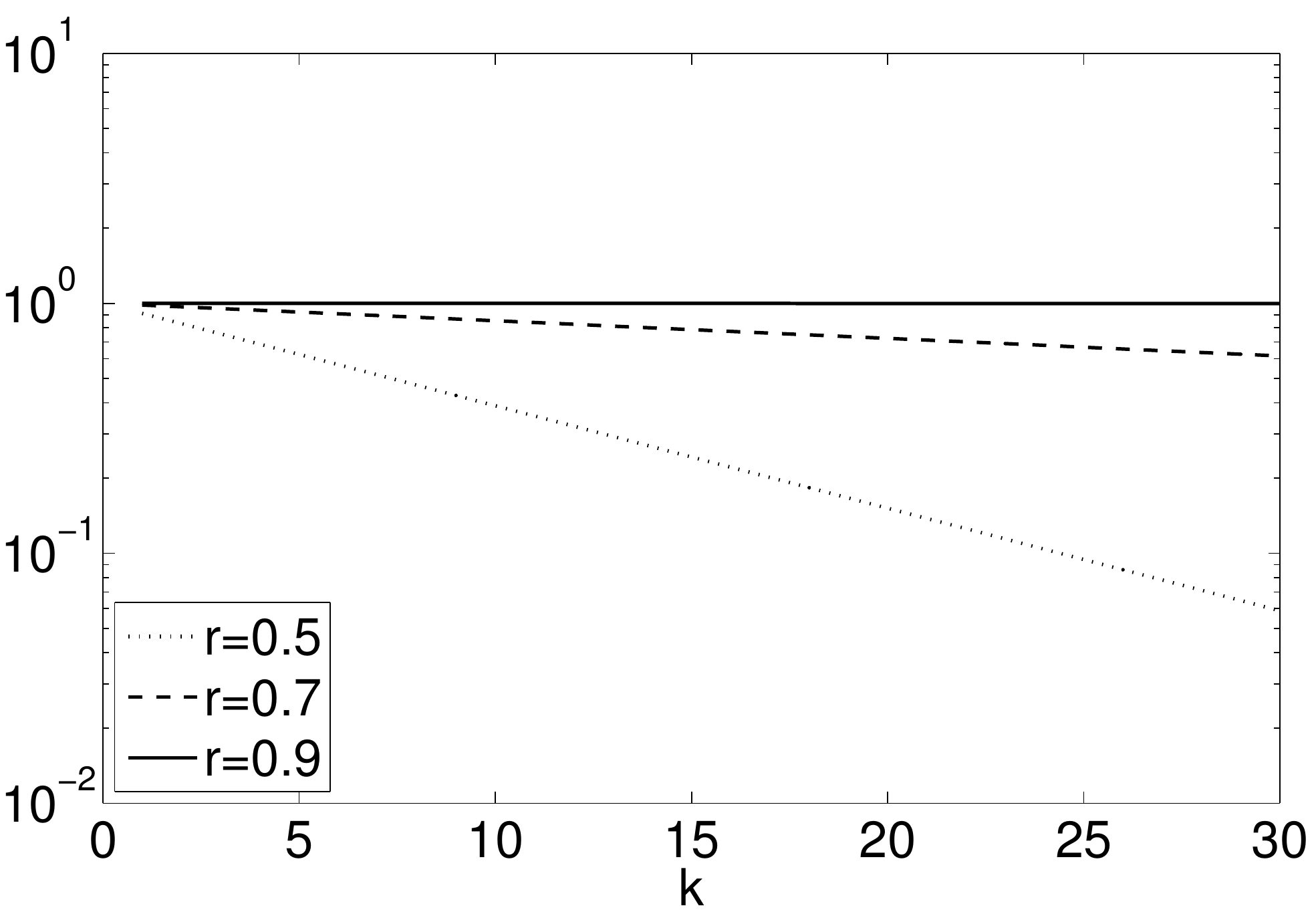}
}
\caption{The values $\frac{\|b_{k,r}\|_{E_r}}{|b_{k,r}(0)|}$ (left) and 
$R_0(E_r)^k$ (right) for the sets $E_r$ from~\eqref{eqn:Er_disks}.}
\label{fig:nfw_2disks_norm}
\end{figure}
As in the case of two intervals, we observe that the convergence speed to zero 
of the norms almost exactly matches the rate predicted by the asymptotic 
analysis, already for small $k$.  The numerically computed asymptotic 
convergence factors (rounded to five digits) for the three sets $E_r$ are 
$R_0(E_{0.5}) = 0.9099$, $R_0(E_{0.7}) = 0.9839$ and $R_0(E_{0.9}) = 0.9999$, 
which in particular explains the slow convergence to zero for $r = 0.7$ and the 
(almost) stagnation for $r = 0.9$.

Figure~\ref{fig:ACF_2disks_3d} shows the numerically computed asymptotic 
convergence factor $R_{z_0}(E(1,0.7))$ as a function of $z_0 \in \C$.

\begin{figure}
\centerline{
\includegraphics[width=0.5\textwidth]{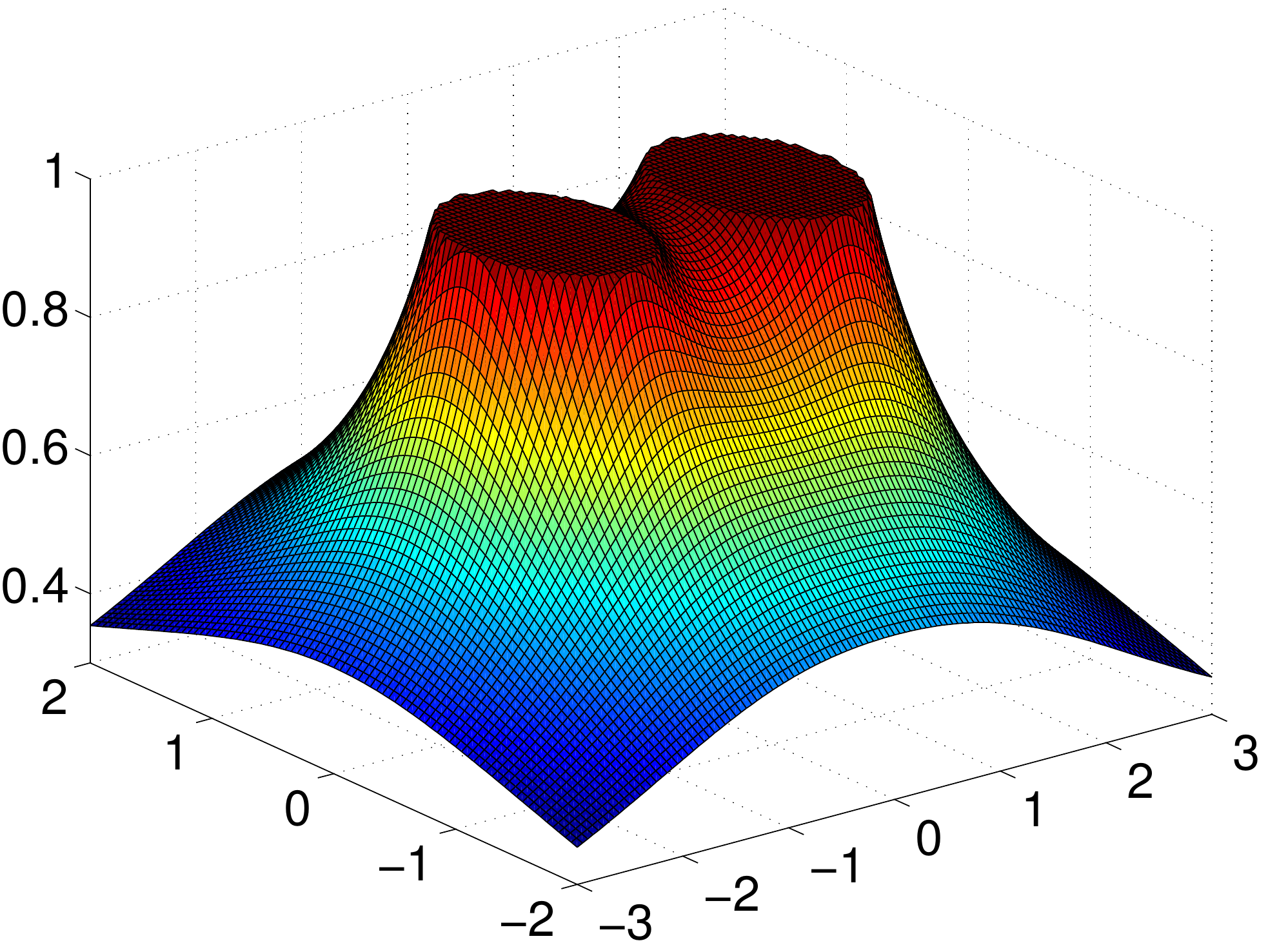}
\includegraphics[width=0.5\textwidth]{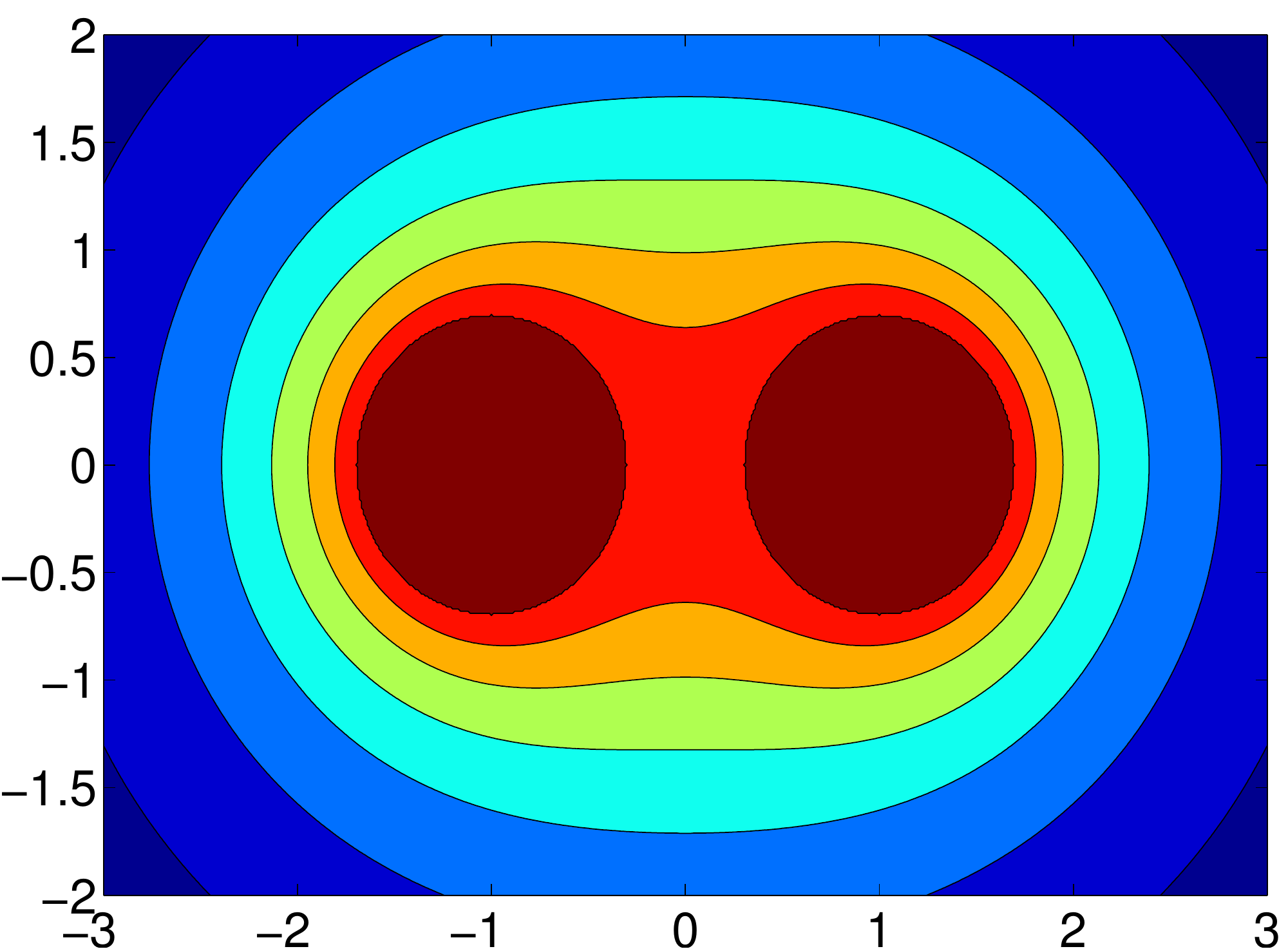}
}
\caption{Asymptotic convergence factor $R_{z_0}(E(1, 0.7))$ as a function of 
$z_0 \in \C$.}
\label{fig:ACF_2disks_3d}
\end{figure}

\subsection{A set with more components}

In this section we will give another illustration of 
Theorem~\ref{thm:relation_fw_f}, starting from a simply connected set of the 
form introduced in~\cite[Theorem~3.1]{KochLiesen2000}; see 
Figure~\ref{fig:set_bw} for an illustration.

\begin{theorem}[{\cite[Theorem~3.1]{KochLiesen2000}}]
\label{thm:bw_set}
Let $\lambda \in \C$ with $\abs{\lambda} = 1$, $\phi \in \: ]0, 2\pi[$ and $R 
\in [1, P[$, where 
\begin{equation*}
P \coloneq \tan(\phi/4) + \frac{1}{\cos(\phi/4)}.
\end{equation*}
Then $\Omega = \Omega(\lambda, \phi, R)$ is the compact set bounded by 
$\widetilde{\psi}( \{ w \in \C : \abs{w} = 1 \} )$, where
\begin{equation*}
\widetilde{\psi}(w) = \frac{ (w - \lambda N) (w-\lambda M)}{ (N-M) w + \lambda 
(NM-1) },
\text{with }
N = \frac{1}{2} \left( \frac{P}{R} + \frac{R}{P} \right),
M = \frac{ R^2 - 1 }{ 2 R \tan(\phi/4) },
\end{equation*}
is a bijective conformal map from the exterior of the unit circle onto 
$\widehat{\C} \backslash \Omega$ and satisfies $\widetilde{\psi}(\infty) = 
\infty$ and $\widetilde{\psi}'(\infty) = t = 1/(N-M) > 0$.
We further have $\lambda \notin \Omega$ and $\{ \lambda e^{i \beta} : \phi/2 
\leq \beta \leq 2 \pi - \phi/2 \} \subseteq \Omega$.
\end{theorem}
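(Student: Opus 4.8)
This is \cite[Theorem~3.1]{KochLiesen2000}; here is how I would organise a proof. Since $w\mapsto\lambda w$ is a conformal automorphism of both $\{|w|>1\}$ and $\widehat\C$ and one computes $\widetilde\psi(\lambda u)=\lambda\,\tfrac{(u-N)(u-M)}{(N-M)u+NM-1}$, I would first reduce to the case $\lambda=1$: writing $\widetilde\psi_0$ for the resulting map, the curve $\widetilde\psi(\{|w|=1\})$ equals $\lambda$ times $\widetilde\psi_0(\{|u|=1\})$ and $\widetilde\psi$ is a conformal bijection of $\{|w|>1\}$ onto its image iff $\widetilde\psi_0$ is, so it suffices to prove everything for $\widetilde\psi_0$, with $\Omega(\lambda,\phi,R)=\lambda\,\Omega_0$ where $\Omega_0:=\widehat\C\setminus\widetilde\psi_0(\{|w|>1\})$. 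The next step is elementary bookkeeping on $N$ and $M$: from $1\le R<P$ one gets $N=\tfrac12(P/R+R/P)>1$ by AM--GM (strict since $R<P$), $M\ge0$, and, crucially, $M<1$ --- the last being equivalent to $R^2-2R\tan(\phi/4)-1<0$, whose positive root in $R$ is exactly $\tan(\phi/4)+\sec(\phi/4)=P$. In particular $N>1>M$, so $N>M\ge0$; then the denominator of $\widetilde\psi_0$ vanishes only at $w_p=(1-NM)/(N-M)$, and $|w_p|<1$ (the inequality $1-NM<N-M$ comes from $N>1$, and $1-NM>M-N$ from $M<1$). Hence $\widetilde\psi_0$ is meromorphic on $\{|w|\ge1\}\cup\{\infty\}$ with a single, simple pole at $\infty$, and $\widetilde\psi_0(w)=w/(N-M)+O(1)$ near $\infty$, which already gives $\widetilde\psi_0(\infty)=\infty$ and $\widetilde\psi_0'(\infty)=1/(N-M)>0$.

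The core of the proof is to show that $\widetilde\psi_0$ maps $\{|w|>1\}$ bijectively and conformally onto the exterior of a Jordan curve, and I would do this by boundary correspondence. First I would show that $\theta\mapsto\widetilde\psi_0(e^{i\theta})$ is injective on $[0,2\pi)$, so that $\Gamma_0:=\widetilde\psi_0(\{|w|=1\})$ is a Jordan curve; this is the \emph{main technical step}, and the only place where the detailed shape of $\partial\Omega_0$ --- hence the interplay of $N$, $M$ and $\phi$ --- really enters. (An alternative I would also try is to exhibit $\widetilde\psi_0$ as a composition M\"{o}bius $\circ\,(z\mapsto z^2)\,\circ$ M\"{o}bius, which would make the Jordan property transparent, the $z^2$-image of a suitable circle through the origin being a cardioid.) Granting the injectivity on the unit circle, since $\widetilde\psi_0$ is continuous up to $\{|w|=1\}$ and meromorphic with a single simple pole on $\{|w|\ge1\}\cup\{\infty\}$, the standard argument-principle (boundary-correspondence) theorem shows that $\widetilde\psi_0$ is a conformal bijection of $\{|w|>1\}\cup\{\infty\}$ onto the component of $\widehat\C\setminus\Gamma_0$ containing $\infty$ --- the orientation being pinned down by $\widetilde\psi_0(w)=w/(N-M)+O(1)$ with $N-M>0$. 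Thus $\Omega_0$ is the closed Jordan region bounded by $\Gamma_0$, in particular compact, and so is $\Omega=\lambda\,\Omega_0$.

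It then remains to check the two geometric assertions (for $\lambda=1$, by the reduction). For $1\notin\Omega_0$: the equation $\widetilde\psi_0(w)=1$ simplifies to $w^2-2Nw+1=0$, whose roots form the reciprocal pair $N\pm\sqrt{N^2-1}$; since $N>1$, one root has modulus $>1$, so $1\in\widetilde\psi_0(\{|w|>1\})=\widehat\C\setminus\Omega_0$, i.e.\ $\lambda\notin\Omega$. For the arc: I would show that for $\phi/2\le\beta\le2\pi-\phi/2$ both roots of $w^2-[(N+M)+e^{i\beta}(N-M)]w+[NM-e^{i\beta}(NM-1)]=0$ lie in the closed unit disk --- equivalently $e^{i\beta}\in\Omega_0$ --- by tracking when a root of this quadratic crosses $\{|w|=1\}$ as $\beta$ runs over $[0,2\pi)$; the crossing should occur exactly at $\beta=\pm\phi/2$, and this is the point where the precise value $P=\tan(\phi/4)+\sec(\phi/4)$ (equivalently, the $\phi$-dependence of $N$ and $M$) is indispensable. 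I expect the two genuinely delicate points to be the injectivity of $\widetilde\psi_0$ on $\{|w|=1\}$ and this arc localization: both look like routine but somewhat intricate trigonometric computations that use the explicit formulas for $N$, $M$ and $P$ in an essential way, with no obvious soft shortcut.
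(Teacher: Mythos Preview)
The paper does not prove this theorem: it is quoted verbatim from \cite[Theorem~3.1]{KochLiesen2000} and used as a black box, so there is no ``paper's own proof'' to compare your proposal against. That said, the paper does drop one structural hint in Section~\ref{sect:examples}: it remarks that $\widetilde\psi$ is a composition of two M\"obius transformations and the \emph{Joukowski map}, referring back to \cite{KochLiesen2000}. This is essentially the ``alternative'' you float, except that the middle factor is Joukowski $w\mapsto\tfrac12(w+1/w)$ rather than $z\mapsto z^2$; with Joukowski the bijectivity on $\{|w|>1\}$ is automatic (Joukowski is already a conformal bijection of $\{|w|>1\}$ onto $\widehat\C\setminus[-1,1]$), whereas with $z^2$ you would still have to arrange the first M\"obius carefully to land in a domain on which squaring is injective. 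So if you want the cleanest route through the injectivity step you flag as the ``main technical step'', the Joukowski decomposition is the one to pursue, and it is presumably how the original reference argues.

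Your outline is otherwise sound: the reduction to $\lambda=1$, the inequalities $N>1>M\ge 0$ (and in particular your derivation of $M<1$ from $R<P$), the location of the pole inside the unit disk, the normalization at infinity, and the computation showing $1\notin\Omega_0$ via $w^2-2Nw+1=0$ are all correct. The two places you explicitly leave open --- injectivity on $\{|w|=1\}$ and the arc containment $\{e^{i\beta}:\phi/2\le\beta\le2\pi-\phi/2\}\subseteq\Omega_0$ --- are indeed where the specific formulas for $N,M,P$ do the work; the Joukowski factorization handles the first of these for free and should also make the second one transparent (the preimage under Joukowski of an arc of the unit circle is easy to describe).
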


Let us consider polynomial pre-images of such sets.  As an example we consider 
the set $\Omega = \Omega(-1, 2 \pi/3, 1.1)$, which satisfies $\Omega = 
\Omega^*$.
Let $E = P^{-1}(\Omega)$ with $P(z) = z^n$; see Figure~\ref{fig:set_bw} 
and~\ref{fig:set_preim_bw} for an illustration with $n = 5$.
By Theorem~\ref{thm:preimage}, the lemniscatic domain corresponding to $E$ is
\begin{equation*}
\cL = \{ w \in \widehat{\C} : \abs{U(w)} = \abs{w^n - t N }^{1/n} > t^{1/n} \},
\end{equation*}
where the logarithmic capacity $t$ of $\Omega$ is given as in
Theorem~\ref{thm:bw_set}.  Since $t N > 0$, the foci of the 
lemniscate are $a_j = e^{2 \pi i \frac{j-1}{n} } (t N)^{1/n}$ for $j = 1, 2, 
\ldots, n$.
By Theorem~\ref{thm:relation_fw_f}, the $(nk)$th Faber--Walsh polynomials for 
$E$ and 
\begin{equation*}
(a_1, a_2, \ldots, a_n, a_1, a_2, \ldots, a_n , \ldots )
\end{equation*}
are given by
\begin{equation}
b_{nk}(z) = t^k F_k(z^n), \label{eqn:bk_Fk_bw}
\end{equation}
where the $F_k$ are the Faber polynomials for $\Omega$, which are explicitly 
known from~\cite[Lemma~4.1]{KochLiesen2000}.  There, the Faber polynomials are 
computed by a recursion involving all previous Faber polynomials.  
The Faber polynomials for $\Omega$ can also be computed by a short (three term) 
recursion; see~\cite{Liesen2001}.
\begin{figure}
\centerline{
\subfigure[Set $\Omega = \Omega(-1, 2 \pi/3, 1.1)$]{\label{fig:set_bw}
\includegraphics[width=0.5\textwidth]{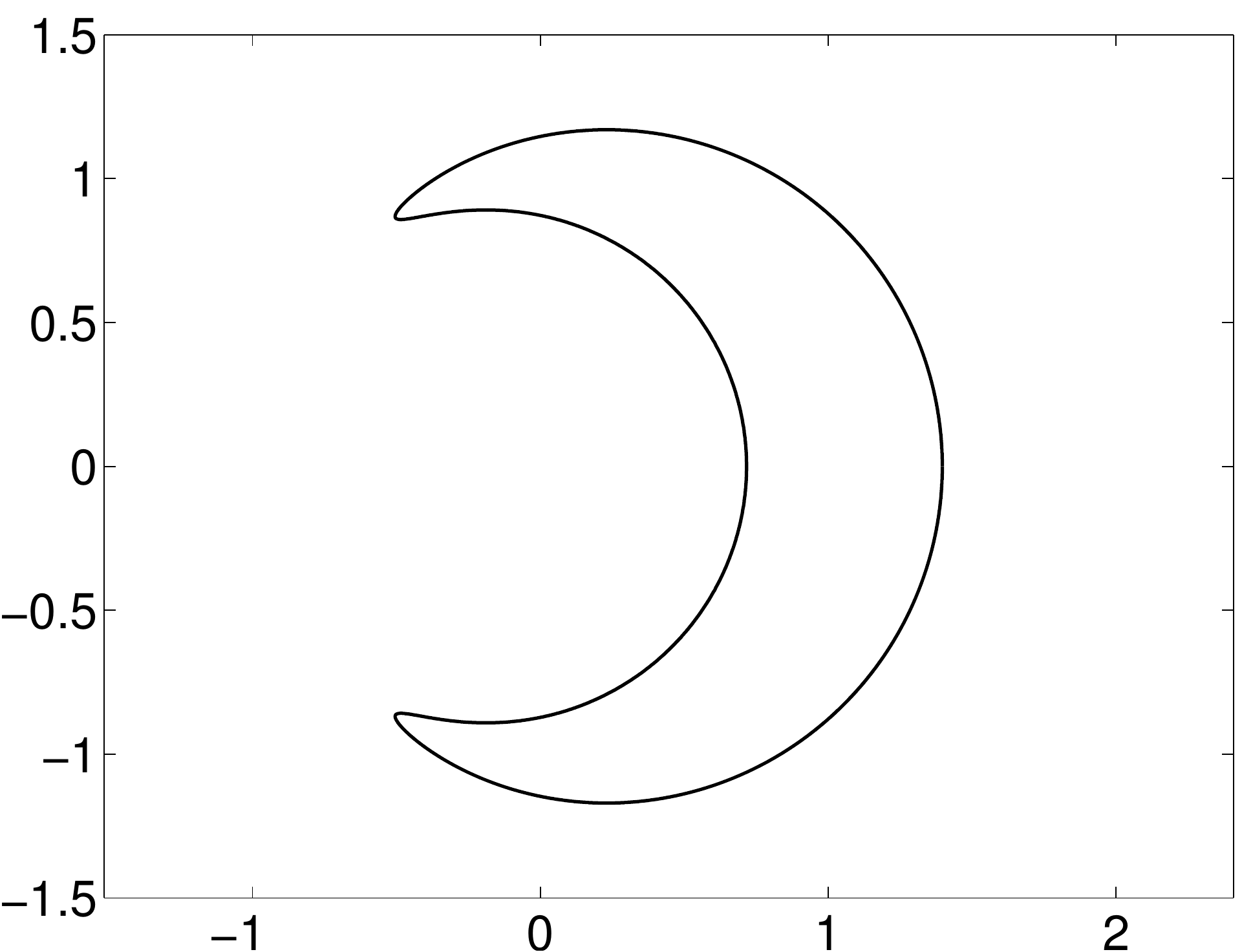}}
\subfigure[$E = P^{-1}(\Omega)$ with $P(z) = z^5$.]{\label{fig:set_preim_bw}
\includegraphics[width=0.5\textwidth]{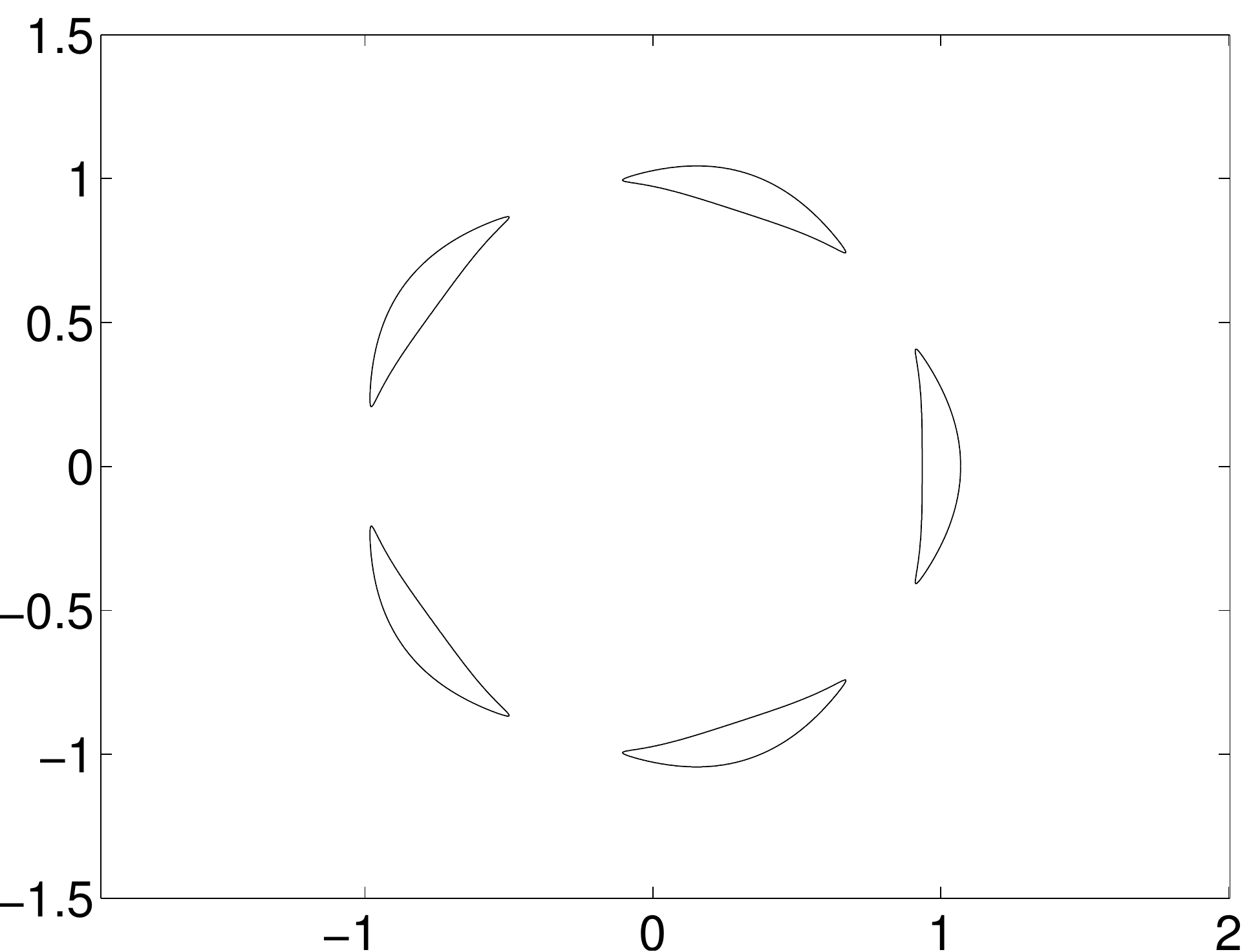}}
}
\centerline{
\includegraphics[width=0.5\textwidth]{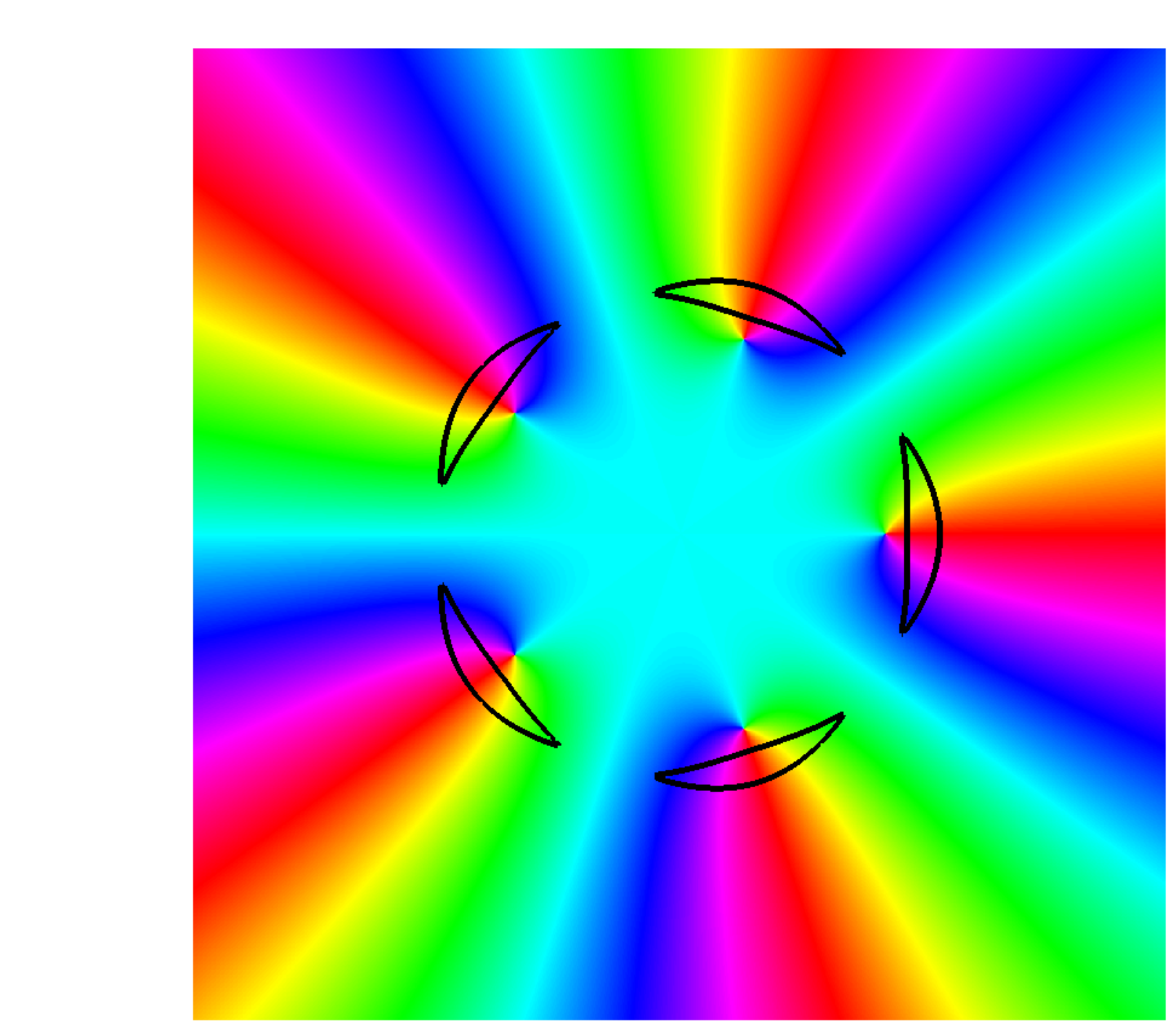}
\includegraphics[width=0.5\textwidth]{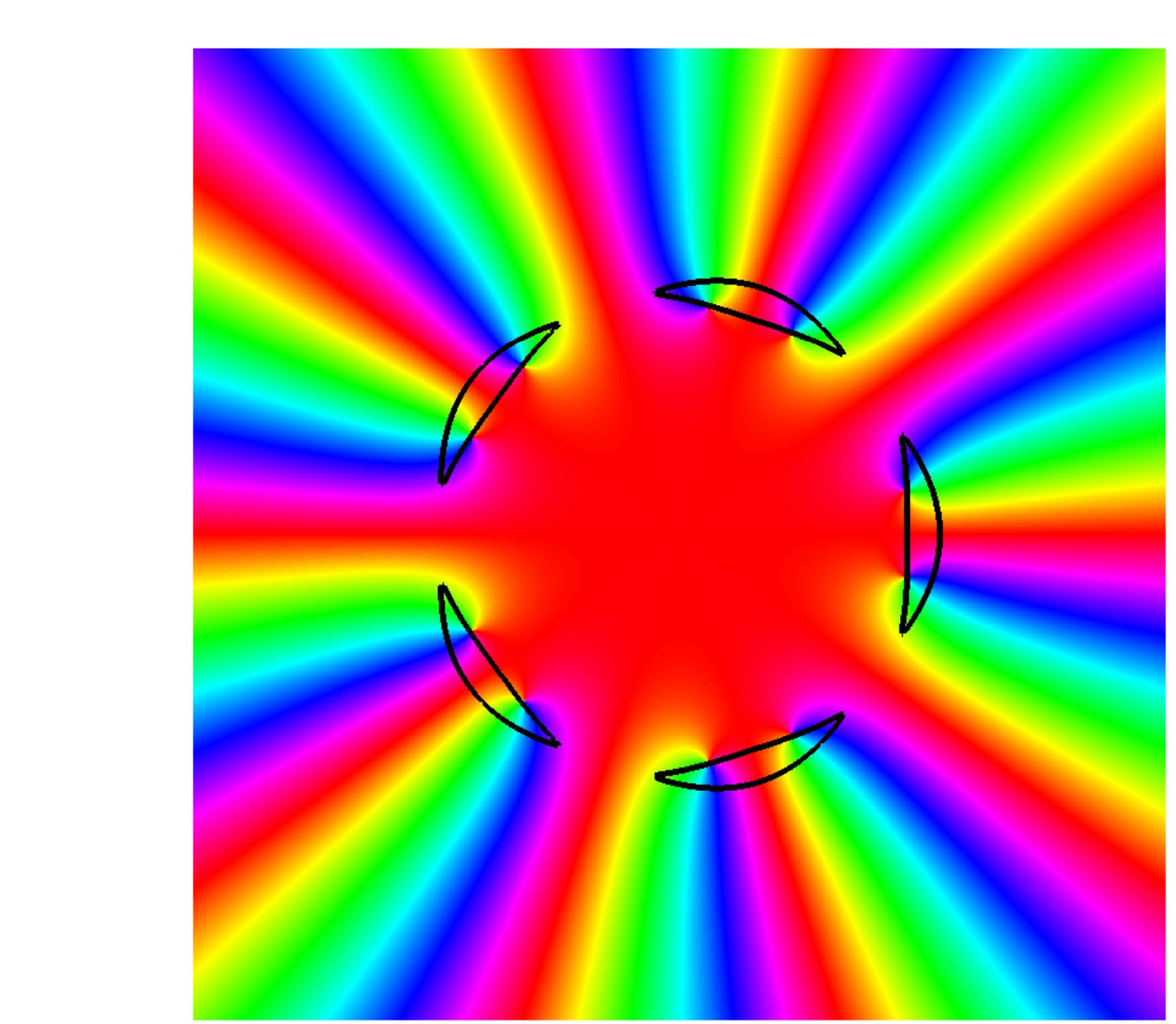}
}
\centerline{
\includegraphics[width=0.5\textwidth]{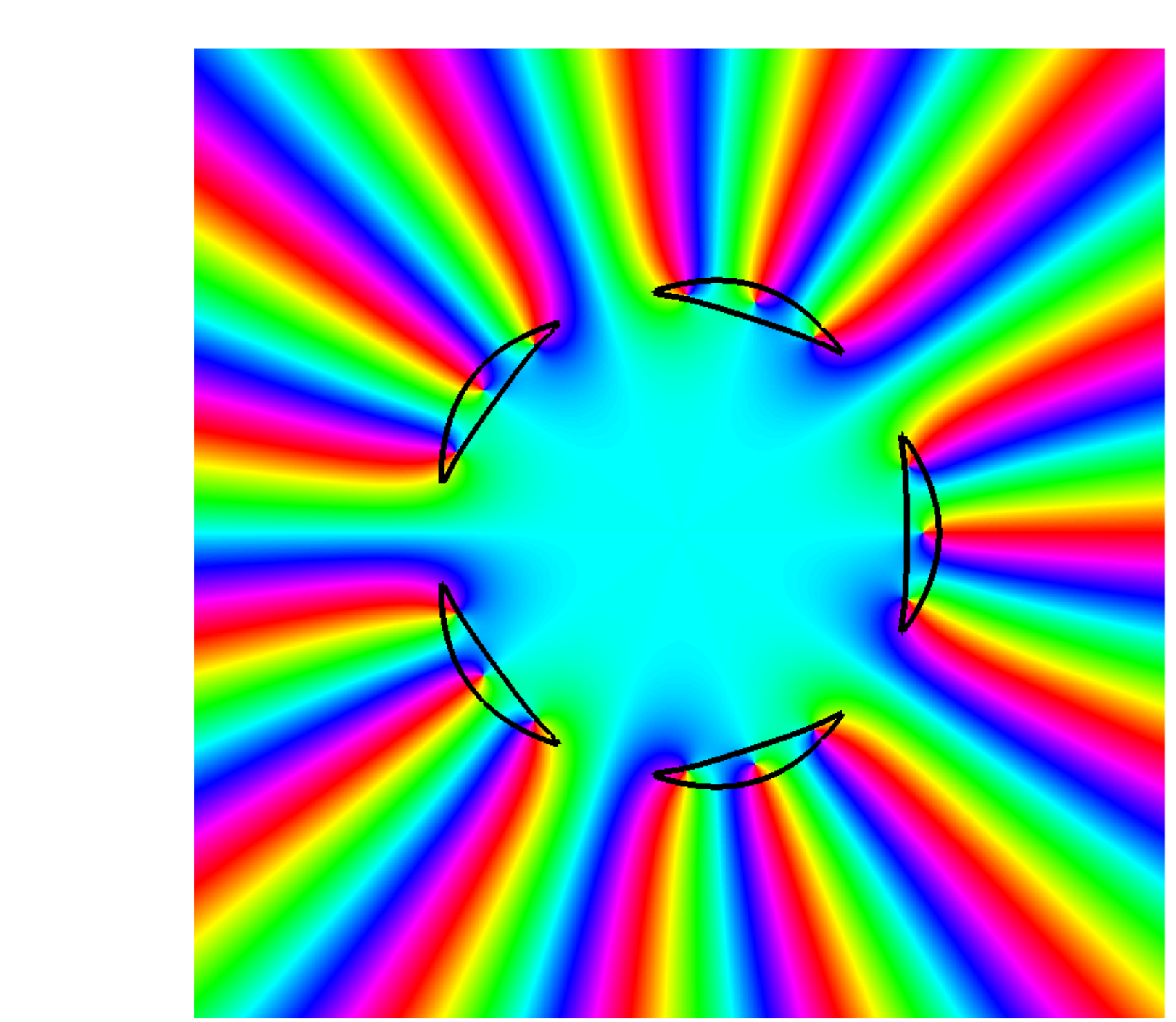}
\includegraphics[width=0.5\textwidth]{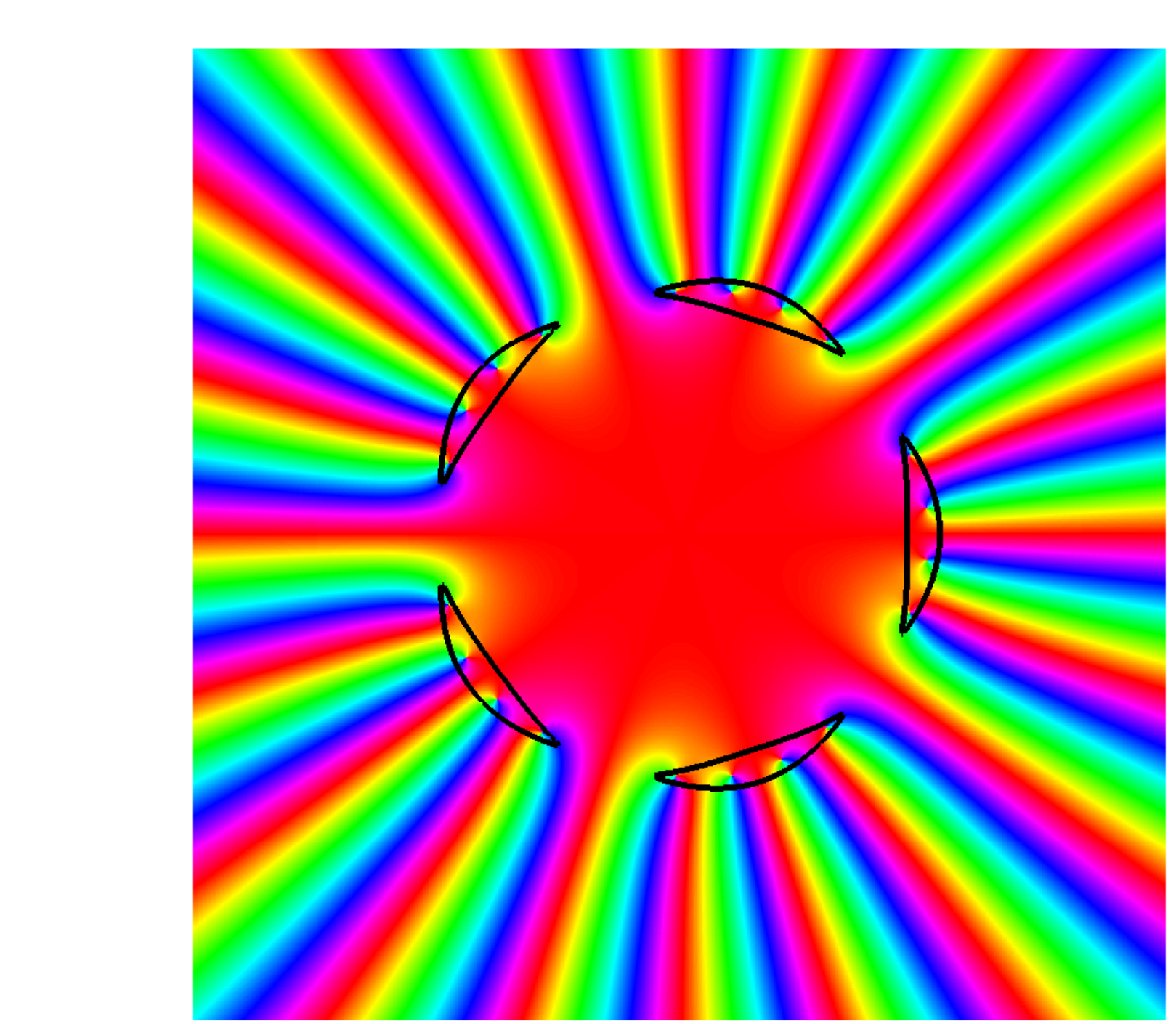}
}
\caption{Set $\Omega = \Omega(-1, 2 \pi/3, 1.1)$, its pre-image $E$, and phase 
portraits of Faber--Walsh polynomials for $E$ of degrees $k 
= 5, 10, 15, 20$.}
\label{fig:bn_preim_bw_pp}
\end{figure}
The ``missing'' Faber--Walsh polynomials can be computed numerically using 
their definition~\eqref{eqn:def_bk}, where we obtain the lemniscatic map $\Phi$ 
of $E$ from $\widetilde{\psi}^{-1}$ by Theorem~\ref{thm:preimage}.  Note that 
$\widetilde{\psi}$ is a composition of two M\"{o}bius transformations and the 
Joukowski map, so that its inverse is easily computable; 
see~\cite[Theorem~3.1]{KochLiesen2000}.

We plot the phase portraits of the Faber--Walsh polynomials $b_{5k}$ for $k = 
1, 2, 3, 4$ in Figure~\ref{fig:bn_preim_bw_pp}.
For degrees $5$ and $10$ we observe that not all zeros of the Faber--Walsh 
polynomial are in $E$, in contrast to the case of the two disks.
This follows from the relation~\eqref{eqn:bk_Fk_bw} and the 
fact that the zeros of the Faber polynomials $F_1$ and $F_2$ for $\Omega$ 
do not lie in $\Omega$; see~\cite{Liesen2000} for details on these Faber 
polynomials.
This also shows that the lower bound in~\eqref{eqn:double_bound_bn} cannot, 
in general, hold for all $k$.
In Figure~\ref{fig:nfw_preim_bw_norm} we plot the values $\norm{b_{5k}}_E / 
\abs{b_{5k}(0)}$ and, for comparison, the values $R_0(E)^k$, where 
$R_0(E) = 0.9803$ (rounded to five digits).

\begin{figure}
\centerline{
\includegraphics[width=0.5\textwidth]{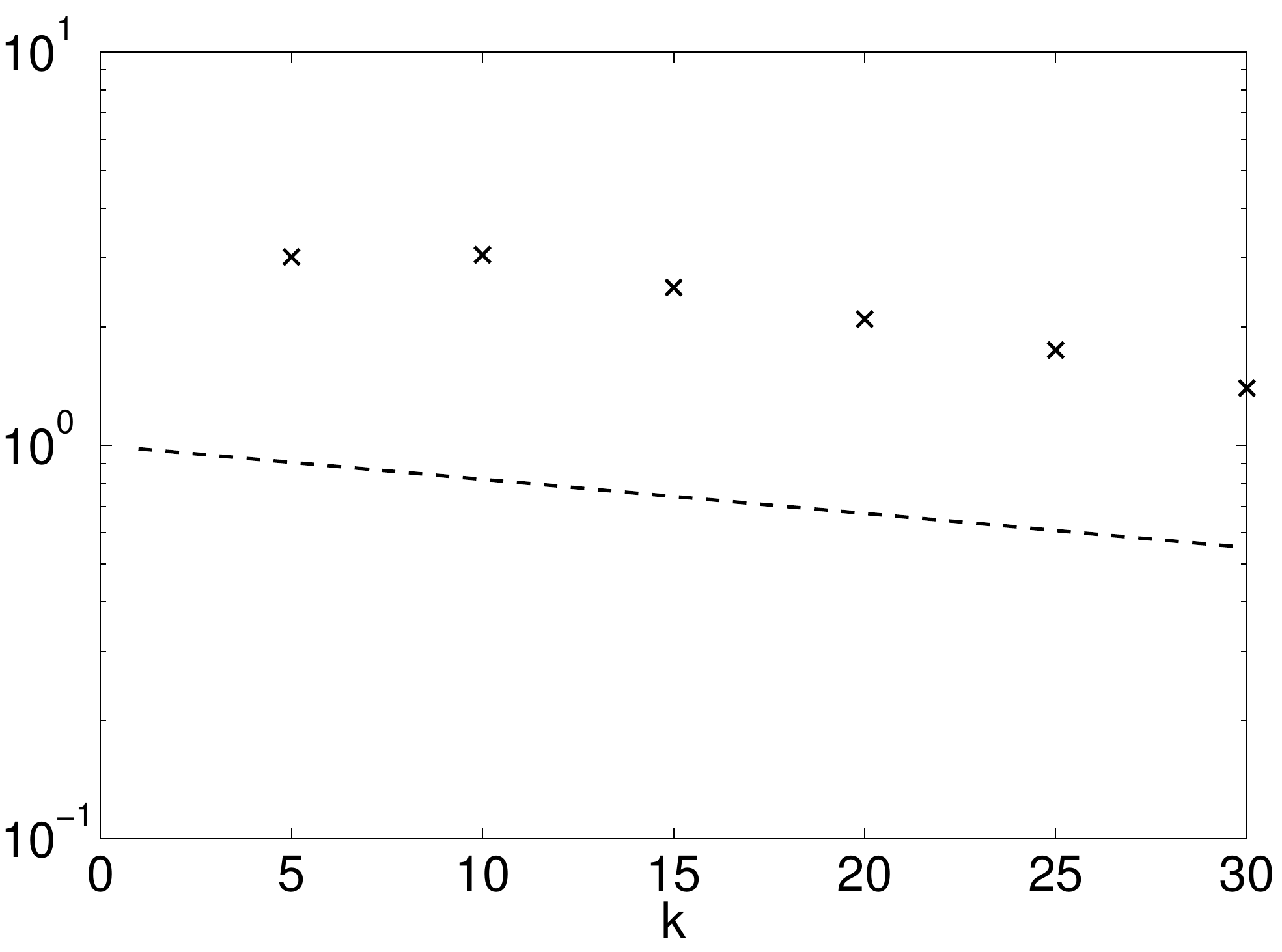}
}
\caption{The values $\frac{\norm{b_k}_E}{ \abs{b_k(0)} }$ and $R_0(E)^k$.}
\label{fig:nfw_preim_bw_norm}
\end{figure}

From~\eqref{eqn:ACF_green} and Theorem~\ref{thm:preimage} the asymptotic 
convergence factor for $E$ and $z_0 \in \C \backslash E$ is given by
\begin{equation*}
R_{z_0}(E) = \frac{\mu}{\abs{U(\Phi(z_0))}}
= \frac{1}{ \abs{\widetilde{\Phi}(z_0^n)}^{1/n} }.
\end{equation*}
Figure~\ref{fig:ACF_preim_bw_3d} shows the asymptotic convergence factor for 
$E$ as a function of $z_0 \in \C$.

\begin{figure}
\centerline{
\includegraphics[width=0.5\textwidth]{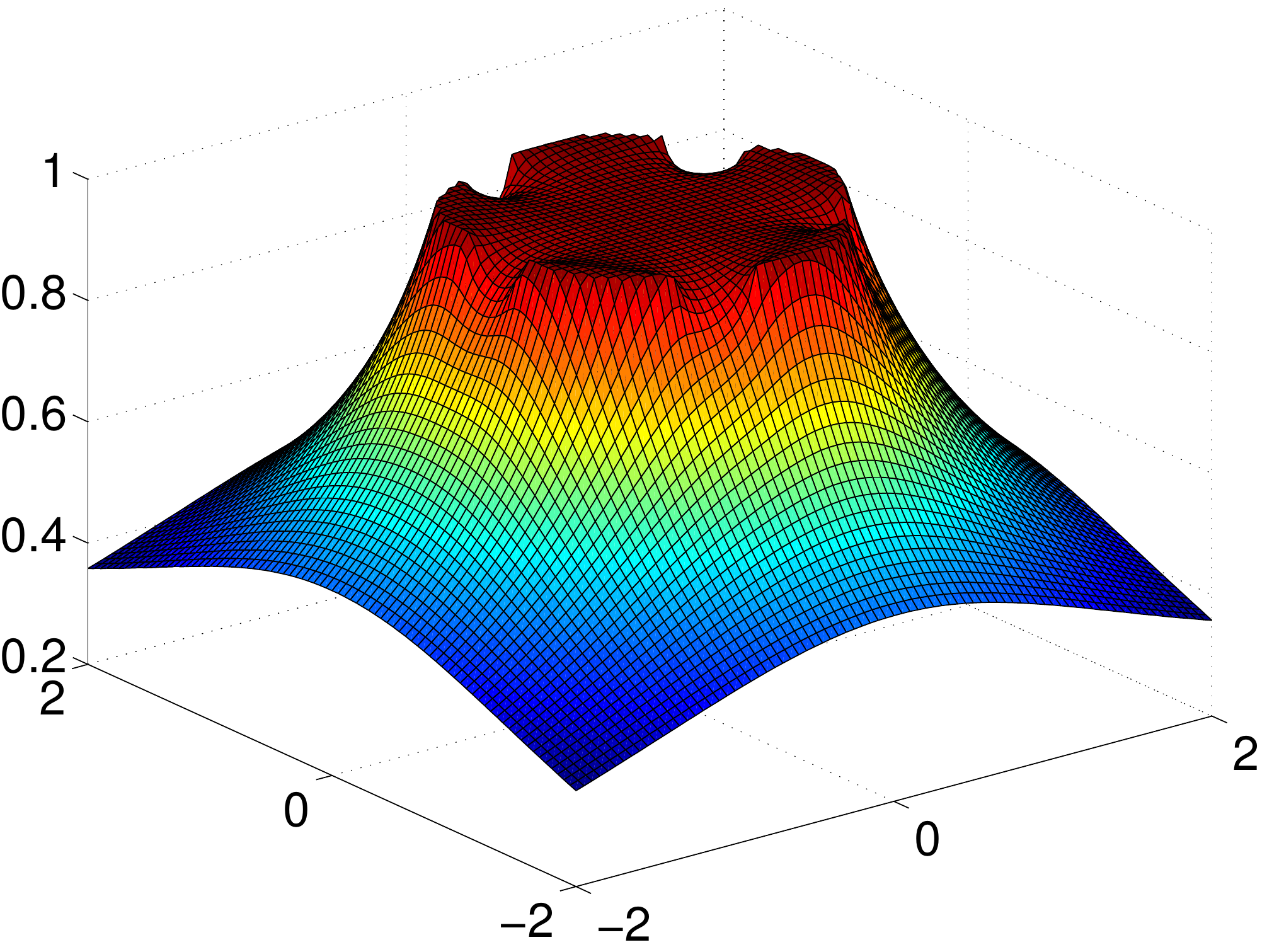}
\includegraphics[width=0.5\textwidth]{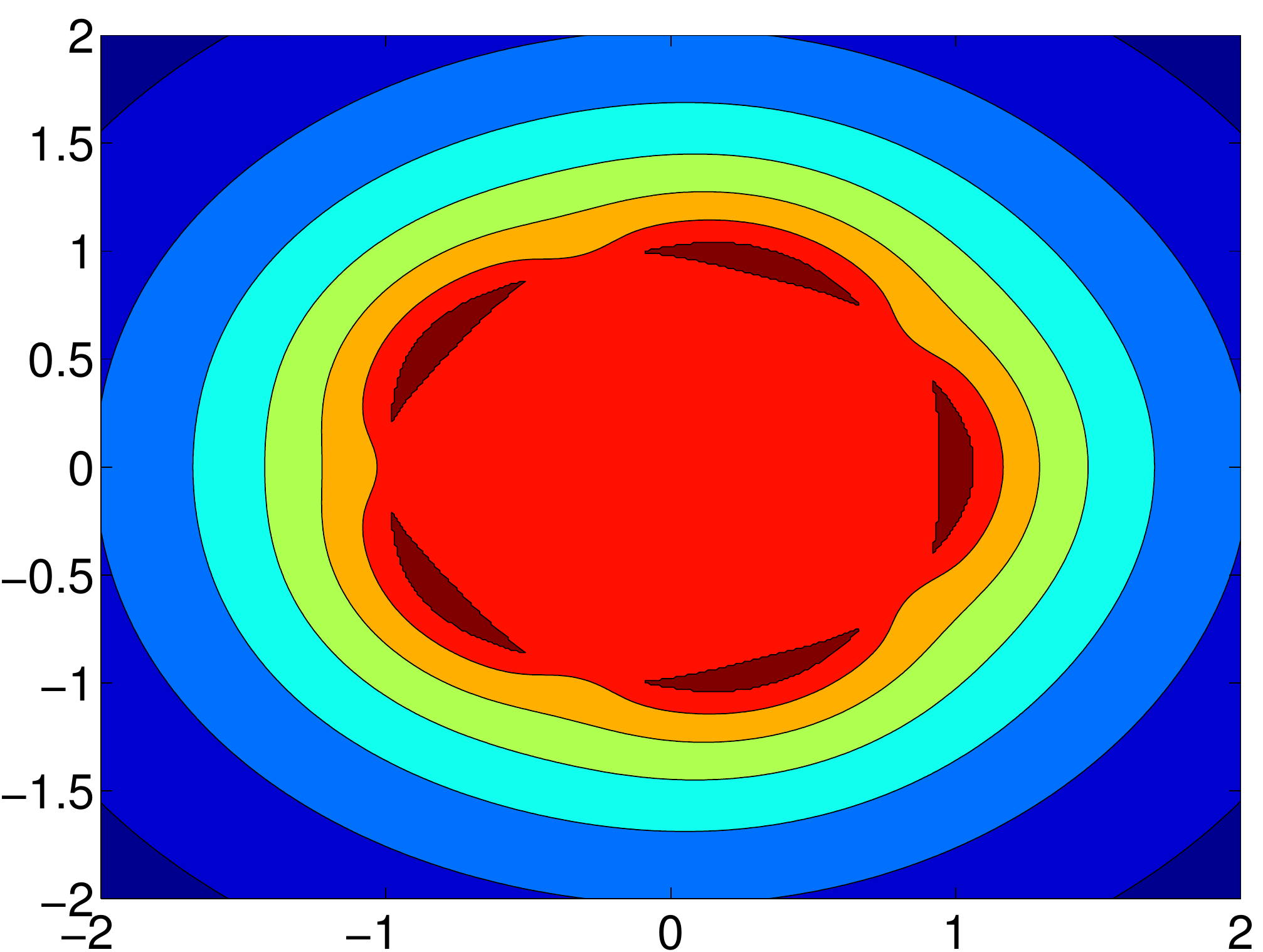}
}
\caption{Asymptotic convergence factor $R_{z_0}(P^{-1}(\Omega))$ as a function 
of $z_0 \in \C$, where $P(z) = z^5$ and $\Omega = \Omega(-1, 2 \pi/3, 1.1)$.}
\label{fig:ACF_preim_bw_3d}
\end{figure}

\paragraph{Acknowledgements}
We thank the anonymous referees for helpful comments.

\bibliographystyle{siam}
\setlength{\bibsep}{1pt}
\bibliography{fw_prop}

\end{document}